\title[$p$-adic limits of topological invariants]
 {On $p$-adic limits of topological invariants} 
\author[S. Kionke]{Steffen Kionke}
\address{Karlsruher Institut f\"ur Technologie \\ Fakult\"at f\"ur Mathematik \\
  Institut f\"ur Algebra und Geometrie \\ Englerstr.2 \\
  76131 Karlsruhe \\ Germany.}
\email{steffen.kionke@kit.edu}
\thanks{The research was funded by the Deutsche Forschungsgemeinschaft
  (DFG, German Research Foundation) -  338540207.}
\date{\today}
\subjclass[2010]{Primary 57M10 ; Secondary 55N35, 20E18, 11S99 }
\keywords{Betti numbers, cohomology growth, Atiyah conjecture}
\numberwithin{equation}{section}
\theoremstyle{plain}
\newtheorem{theorem}[subsection]{Theorem}
\newtheorem*{theorem*}{Theorem}
\newtheorem*{proposition*}{Proposition}
\newtheorem*{lemma*}{Lemma}
\newtheorem{lemma}[subsection]{Lemma}
\newtheorem*{corollary*}{Corollary}
\newtheorem{proposition}[subsection]{Proposition}
\theoremstyle{definition}
\newtheorem{definition}[subsection]{Definition}
\newtheorem*{conjecture}{Conjecture}
\newtheorem{remark}[subsection]{Remark}
\newtheorem*{remark*}{Remark}
\newtheorem{example}[subsection]{Example}
\newtheorem*{example*}{Example}
\newtheorem{question}[subsection]{Question}
\newtheorem*{question*}{Question}
\DeclareMathOperator{\Hom}{Hom}
\DeclareMathOperator{\Aut}{Aut}
\DeclareMathOperator{\GL}{GL}
\DeclareMathOperator{\Ext}{Ext}
\DeclareMathOperator{\chr}{char}
\DeclareMathOperator{\Ind}{ind}
\DeclareMathOperator{\Irr}{Irr}
\DeclareMathOperator{\infl}{infl}
\DeclareMathOperator{\Gal}{Gal}
\DeclareMathOperator{\Res}{res}
\DeclareMathOperator{\Or}{Or}
\DeclareMathOperator{\adm}{adm}
\DeclareMathOperator{\im}{im}
\DeclareMathOperator{\hyphendim}{-dim}
\DeclareMathOperator{\tors}{tors}
\DeclareMathOperator{\sign}{sign}
\newcommand{\pdim}{p\hyphendim}
\newcommand{\pcard}{\#_p}
\providecommand{\isomorph}{\stackrel{\simeq}{\longrightarrow}}
\newcommand{\normal}{\trianglelefteq}
\providecommand{\bbN}{\mathbb{N}}
\providecommand{\bbQ}{\mathbb{Q}}
\providecommand{\bbZ}{\mathbb{Z}}
\providecommand{\bbF}{\mathbb{F}}
\providecommand{\bbC}{\mathbb{C}}
\providecommand{\admK}{\mathsf{K}^{\mathsf{adm}}_0}
\newcommand{\cga}[2]{ #1[\![#2]\!]}
\newcommand{\up}[1]{\:^{#1} }
\newcommand{\alg}[1]{\overline{#1} }
\providecommand{\admA}{\mathcal{A}_p^{\mathsf{adm}}}
\providecommand{\calU}{\mathcal{U}}
\providecommand{\calM}{\mathcal{M}}
\providecommand{\calF}{\mathcal{F}}
\providecommand{\fu}{\mathfrak{u}}
\providecommand{\gl}{\mathfrak{gl}}
\renewcommand{\epsilon}{\varepsilon}
\renewcommand{\phi}{\varphi}
\begin{document}

\begin{abstract}
  The purpose of this article is to
  define and study new invariants of topological spaces: the $p$-adic Betti numbers and the $p$-adic torsion.
  These invariants take values in the $p$-adic numbers and
  are constructed from a virtual pro-$p$ completion of
  the fundamental group.
  The key result of the article is an approximation theorem
  which shows that the $p$-adic invariants are in limits of their classical analogues.
  This is reminiscent of L\"uck's approximation theorem for $L^2$-Betti numbers.
  
  After an investigation of basic properties and examples 
  we discuss the $p$-adic analog of the Atiyah conjecture:
  When do the $p$-adic Betti numbers take integer values?
  We establish this for a class of spaces and discuss applications to cohomology growth.
\end{abstract}

\maketitle

\section{Introduction}
Fix a prime number $p$.
Let $X$ be a finite CW-complex with fundamental group $\Gamma = \pi_1(X,x_0)$.  The purpose of
this article is to construct and study $p$-adic valued invariants of $X$ using virtual pro-$p$
completions of its fundamental group.  A \emph{virtual pro-$p$ completion} of $\Gamma$ is a
homomorphism $\phi\colon \Gamma \to G$ into a profinite group $G$ which has an open pro-$p$
subgroup (cf.~\S\ref{def:v-pro-p-completion}).
The first family of invariants to be defined are the
\emph{$p$-adic Betti numbers}. The $j$-th $p$-adic Betti number of $X$ is a $p$-adic integer  
$$b_j^{[p]}(X;\phi,k) \in \bbZ_p$$
 which depends on a virtual pro-$p$ completion $\phi$ and a field $k$ which is
 either the field of rational numbers $\bbQ$ or a finite field of characteristic $\ell \neq p$.
 The $p$-adic Betti numbers
are homotopy invariants in the natural sense: If $f \colon Y \to X$
is a homotopy equivalence,
then $b_j^{[p]}(X;\phi,k) = b_j^{[p]}(Y;  \phi\circ f_*, k)$ where $f_*\colon \pi_1(Y,y_0) \to \pi_1(X,x_0)$
is the induced isomorphism of fundamental groups.
The second invariant to be defined here is the \emph{$p$-adic torsion} of $X$. The $j$-th $p$-adic
torsion of $X$ is a $p$-adic integer (or $\infty$)
$$t_j^{[p]}(X;\phi,R) \in \bbZ_p \cup \{\infty \} $$
which depends on a virtual pro-$p$ completion $\phi$ and a commutative ring $R$ in which $p$ is invertible.
Just as $p$-adic Betti numbers the $p$-adic torsion is a homotopy invariant.
The definition of $p$-adic invariants does not depend on the cellular structure and
generalizes to connected, locally path-connected and semilocally simply-connected spaces.

\subsubsection*{The Approximation Theorem}
The central result in this article is an approximation theorem which shows that the $p$-adic
invariants are $p$-adic limits of their classical analogues.
Recall that the fundamental group $\Gamma$ acts on the universal covering $\widetilde{X}$
of $X$ via deck transformations such that $X \cong \widetilde{X}/\Gamma$ and every finite index subgroup
$\Delta \leq_{f.i.} \Gamma$ gives rise to a finite sheeted covering space $\widetilde{X}/\Delta$ of $X$.
\begin{theorem}\label{thm:intro-approximation}
  Let $X$ be a finite CW-complex and let $\phi \colon \Gamma \to G$ be a virtual pro-$p$ completion
  of its fundamental group.  For every chain $G_1 \supseteq G_2 \supseteq G_3 \dots$ 
  of open normal subgroups in $G$ which satisfies $\bigcap_{n \in \bbN} G_n = \{1\}$,
  the following assertions hold:
  \begin{align*}
    \text{(i) } &\lim_{n \to \infty} b_j(\widetilde{X}/\phi^{-1}(G_n); k) = b_j^{[p]}(X;\phi,k),\\
  \text{(ii) } &\lim_{n \to \infty} \bigl| \tors H^j(\widetilde{X}/\phi^{-1}(G_n); R) \bigr| = t_j^{[p]}(X;\phi,R),\end{align*}
  where the limits are taken in $\bbZ_p \cup \{\infty\}$.
  Here $|\tors H^j(Y;R)|$ denotes cardinality of the $R$-torsion part in the $j$-th cohomology.
\end{theorem}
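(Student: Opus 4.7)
The strategy is to package both sides of (i) and (ii) as invariants of a single chain complex over the completed group ring $\bbZ_p[\![G]\!]$ and to establish convergence by exploiting the pseudocompact structure of this ring.

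First, I would reduce to the case where $G$ is itself pro-$p$. Let $U \leq G$ be an open pro-$p$ subgroup; since $\bigcap_n G_n = \{1\}$, we have $G_n \subseteq U$ for all sufficiently large $n$. Then $\Lambda := \phi^{-1}(U)$ is a finite-index subgroup of $\Gamma$, and the covers $\widetilde X/\phi^{-1}(G_n)$ are simultaneously covers of the finite CW-complex $Y := \widetilde X/\Lambda$ corresponding to the genuine pro-$p$ completion $\phi|_\Lambda \colon \Lambda \to U$. Since the $p$-adic invariants are homotopy invariants and behave naturally under restriction, both sides of (i), (ii) are unchanged, so we may assume $G$ is pro-$p$; replacing $G$ by $\overline{\phi(\Gamma)}$, which does not alter the covers, we may further assume $\phi(\Gamma)$ is dense in $G$. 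Now form the bounded complex of finitely generated free $\bbZ_p[\![G]\!]$-modules
$$\widehat C_* \;:=\; \bbZ_p[\![G]\!] \otimes_{\bbZ[\Gamma]} C_*(\widetilde X; \bbZ).$$
Density of $\phi(\Gamma)$ gives, for every $n$, a natural identification $\bbZ_p[G/G_n] \otimes_{\bbZ_p[\![G]\!]} \widehat C_* \cong C_*(\widetilde X/\phi^{-1}(G_n); \bbZ_p)$. By construction (using the definitions given earlier in the paper), the $p$-adic invariants $b_j^{[p]}(X;\phi,k)$ and $t_j^{[p]}(X;\phi,R)$ are extracted from the $\bbZ_p[\![G]\!]$-module $H_j(\widehat C_*)$; the theorem therefore reduces to the assertion that the classical $\bbQ_p$-dimensions and torsion sizes of the finite-level homology modules $H_j(\bbZ_p[G/G_n] \otimes \widehat C_*)$ converge $p$-adically to these Iwasawa-theoretic invariants.

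To establish this convergence I would combine two ingredients. First, the homology $H_j(\widehat C_*)$ is finitely generated over the pseudocompact ring $\bbZ_p[\![G]\!]$ (because $\widehat C_*$ is a bounded complex of finitely generated modules), and the maximal ideal $\mathfrak m \subseteq \bbZ_p[\![G]\!]$ admits a cofinal family of powers contained in the augmentation kernels $\ker(\bbZ_p[\![G]\!] \to \bbZ_p[G/G_n])$. Second, the finite-level homology $H_j(\bbZ_p[G/G_n] \otimes \widehat C_*) \otimes \bbQ_p$ decomposes, under the semisimple action of the finite $p$-group $G/G_n$, into a trivial isotypic component (whose $\bbQ_p$-dimension carries the limit value) and non-trivial isotypic components whose $\bbQ_p$-dimensions involve degrees of $p$-cyclotomic extensions and are therefore forced to $p$-adic divisibility growing with $n$; for torsion, an analogous argument applies via elementary divisors over the complete local ring $\bbZ_p[\![G]\!]$. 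The main obstacle is precisely this algebraic convergence lemma: controlling the $p$-divisibility of the non-trivial isotypic contributions uniformly across the tower --- especially for non-abelian pro-$p$ groups or pro-$p$ groups of bounded exponent, where the naive cyclotomic estimate is weakest --- requires a careful interplay between the representation theory of finite $p$-groups over $\bbQ_p$ and the pseudocompact structure of $\bbZ_p[\![G]\!]$. The geometric and algebraic reductions above are essentially formal; all the substantive work lies in this final convergence step.
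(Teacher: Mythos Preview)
Your route diverges substantially from the paper's, and there is a genuine gap at the point where you write ``by construction (using the definitions given earlier in the paper), the $p$-adic invariants \dots\ are extracted from the $\bbZ_p[\![G]\!]$-module $H_j(\widehat C_*)$.'' The paper never works on the compact side. It defines $b_j^{[p]}(X;\phi,k)$ as $\pdim_k^G \bar H^j(X;\phi,k)$, where $\bar H^j$ is the \emph{direct limit} $\varinjlim_N H^j(\widetilde X/\Gamma_N;k)$, a smooth admissible $k$-representation of $G$; the $p$-adic dimension is then the value at $[\bar H^j]$ of a continuous $\bbZ_p$-linear functional on the K-group $\admK(\cga{k}{G})$. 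The $p$-adic torsion is defined analogously via an admissible Burnside ring and a $p$-adic cardinality. None of this is visibly the same as an invariant of the finitely generated $\bbZ_p[\![G]\!]$-module $H_j(\widehat C_*)$, and you do not supply the bridge (which would involve both a homology/cohomology comparison and a smooth/pseudocompact duality). Without that bridge your argument, even if it converges to something, does not converge to the quantity in the theorem.

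Your description of the convergence mechanism is also off. You write that the ``trivial isotypic component \dots\ carries the limit value''; but the trivial isotypic piece of $H^j(\widetilde X/\Gamma_n;k)$ is just $H^j(X;k)$, and $b_j^{[p]}$ is in general strictly larger (e.g.\ tori). In the paper's argument the sequence $\dim_k \bar H^j(X;\phi,k)^{G_n}$ is increasing, and what makes it $p$-adically Cauchy is that the successive increments are sums of $\dim_k S$ over irreducibles $S$ of $G/G_{n+1}$ not factoring through $G/G_n$; the key lemma (Lemma~\ref{lem:keylem}) says that for each $\nu$ only finitely many $[S]\in\Irr_k(G)$ satisfy $p^\nu\nmid\dim_k S$, using both the $p$-power dimension of irreducibles over $\overline k$ and the Jordan--Minkowski finiteness of finite subgroups of $\GL_N(\bbQ)$. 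The actual proof of Theorem~\ref{thm:approximation} is then a two-line application of Lemma~\ref{lem:exact-invariants} (identifying $H^j(\widetilde X/\Gamma_n;k)$ with $\bar H^j(X;\phi,k)^{G_n}$) together with the Approximation Lemma~\ref{lem:approximation-dim} (resp.\ \ref{lem:approximation-card}), plus a short check that non-admissibility propagates. Your sketch gestures at the $p$-divisibility phenomenon but does not isolate the finiteness input, and the torsion case via ``elementary divisors over $\bbZ_p[\![G]\!]$'' is not viable as stated: that ring has no useful elementary-divisor theory for general pro-$p$ $G$, whereas the paper sidesteps this entirely by treating $\tors\bar H^j$ as an admissible $G$-\emph{set} and using the Burnside-ring cardinality.
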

This result is a special case of Theorem \ref{thm:approximation} below, which is significantly more general.
It is worth noting that the theorem, in particular,
includes the statement that the limits exist and, for this introduction, it is receivable to consider this the definition of
the $p$-adic invariants.

\subsubsection*{Comparison with  other Betti numbers}
The approximation theorem allows to transfer some properties of ordinary Betti numbers to
$p$-adic Betti numbers.  For instance, straightforward applications of the Approximation
Theorem yield a K\"unneth formula (see \S\ref{par:products}) and a form of Poincar\'e duality
(see Proposition~\ref{prop:poincare-duality}).

In other aspects, however, the nature of $p$-adic Betti
numbers is quite different from the nature of ordinary Betti numbers.
For example, the $p$-adic Betti numbers are invariant under passing to finite sheeted covering spaces (see \S\ref{par:virtual-invariance}) and the $p$-adic Euler characteristic always vanishes if $\phi$ has an infinite image (see \S\ref{par:eulerchar}).

It seems more appropriate to consider $p$-adic invariants to be analogues of
$L^2$-invariants. From this perspective the $p$-adic Approximation Theorem is the analogue of
L\"uck's approximation theorem for $L^2$-Betti numbers which states that
$$\lim_{n\to \infty} \frac{b_j(\widetilde{X}/\Gamma_n;\bbQ)}{|\Gamma:\Gamma_n|} = b_j^{(2)}(X)$$
for every chain of finite index normal subgroups $\Gamma_n \normal_{f.i.} \Gamma$ with trivial intersection; see \cite{Lück94}.
To a certain extend our results on $p$-adic approximation go beyond the known $L^2$-approximation results.
For instance, it is currently unknown
whether an analogue of L\"uck's theorem holds for Betti numbers with coefficients in a finite field
(see Conjecture 3.4 in \cite{LückSurvey16}), whereas
the $p$-adic Approximation Theorem is valid for $k = \bbF_\ell$ for all primes $\ell \neq p$.
Similarly, the $p$-adic Approximation Theorem applies to torsion cohomology,
whereas the approximation of $L^2$-torsion by torsion homology is a fundamental open problem;
see Conjecture 8.9 in~\cite{LückSurvey16}.
Anton Clau{\ss}nitzer discussed another approach
towards an $p$-adic form of L\"uck's approximation theorem in his Ph.D. thesis \cite{Claussnitzer18}.
His methods are based on $p$-adic operator algebras
but seem to be to restrictive to apply to the topological setting.

In spite of the analogy, the $p$-adic Betti numbers behave very much like antagonists of the $L^2$-Betti numbers.
For example, the $L^2$-Betti numbers distinguish free groups of different rank, whereas
the $p$-adic Betti numbers of the free group $F_r$
do not depend on the rank $r$; more precisely
$$b_0^{[p]}(F_r;\phi,k) = b_1^{[p]}(F_r;\phi,k) = 1$$
whenever $\phi$ has an infinite image; see \S\ref{par:free-groups}. Similarly, the $p$-adic
Betti numbers do not see the genus of a surface; see \S\ref{par:surfaces}.  On the other hand,
the $p$-adic Betti numbers of free abelian groups coincide with the ordinary Betti numbers
(see \S\ref{par:tori}), whereas the $L^2$-Betti numbers of every infinite amenable group
vanish; see Theorem 0.2 in \cite{CheegerGromov86}.

The Approximation Theorem establishes a relationship between the $j$-th $p$-adic Betti number
of $X$ and the $j$-th virtual Betti number of $X$, which is defined as the supremum of
rational Betti numbers $vb_j(X) = \sup_{Y} b_j(Y;\bbQ)$ over all finite sheeted covering
spaces $Y$ of $X$.  In fact, suppose that $vb_j(X)$ is finite, then the sequence approximating
$b_j^{[p]}(X;\phi,k)$ has to stabilize at some natural number between $0$ and $vb_j(X)$.  Even
though the virtual Betti numbers often take the value $\infty$, there are interesting families
of groups and spaces with finite virtual Betti numbers, e.g.\ finitely presented
nilpotent-by-abelian groups have finite first virtual Betti numbers; see
\cite{BridsonKochloukova15}.

\subsubsection*{The values of $p$-adic Betti numbers}
In all examples we studied the $p$-adic Betti numbers are in fact integers.
This comes as a surprise, since the construction of $p$-adic invariants really uses $p$-adic analysis.
In fact, the $p$-adic torsion can take transcendental values; see \S\ref{par:example-torsion}.
The following question is intruiging.
\begin{question}
  Under which conditions on $X$ and $\phi$ are the $p$-adic Betti numbers $b_j^{[p]}(X;\phi,k)$
  in $\bbZ$?
\end{question}
Our list of examples is simply too short to propose a conjectural answer.
We will refer to this question as the \emph{$p$-adic Atiyah question},
since it is reminiscent of the Atiyah conjecture for $L^2$-Betti numbers.
\begin{conjecture}
  Let $X$ be a finite connected CW-complex with fundamental group $\Gamma$.
  If $\Gamma$ is torsion-free, then $b_j^{(2)}(X) \in \bbZ$.
\end{conjecture}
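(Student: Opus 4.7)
The final statement is the Atiyah conjecture for $L^2$-Betti numbers, which remains a celebrated open problem in general. The most successful line of attack, initiated by Linnell, proceeds as follows. The plan is to locate a division ring $D(\Gamma)$ inside the algebra $\mathcal{U}(\Gamma)$ of operators affiliated with the group von Neumann algebra $\mathcal{N}(\Gamma)$, sitting between $\bbQ\Gamma$ and $\mathcal{U}(\Gamma)$, such that the canonical dimension function of $\mathcal{N}(\Gamma)$ restricts on $D(\Gamma)$-modules to the usual $D(\Gamma)$-dimension. Since a skew field assigns only non-negative integer dimensions to its finitely generated modules, this forces $b_j^{(2)}(X) \in \bbZ$.

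First I would replace the topological input by the cellular $L^2$-chain complex $C_*^{(2)}(\widetilde{X})$, which is a finite complex of finitely generated free $\mathcal{N}(\Gamma)$-modules. Because $\mathcal{U}(\Gamma)$ is flat and dimension-preserving over $\mathcal{N}(\Gamma)$, one has
\[
b_j^{(2)}(X) \;=\; \dim_{\mathcal{N}(\Gamma)} H_j\bigl(\mathcal{U}(\Gamma) \otimes_{\mathcal{N}(\Gamma)} C_*^{(2)}(\widetilde{X})\bigr).
\]
The next step is to identify the division closure $D(\Gamma)$ of $\bbQ\Gamma$ inside $\mathcal{U}(\Gamma)$, to verify that the differentials in the $D(\Gamma)$-linearised complex $D(\Gamma) \otimes_{\bbQ\Gamma} C_*(\widetilde{X};\bbQ)$ are matrices over $D(\Gamma)$, and to argue that the resulting dimensions agree with those computed over $\mathcal{N}(\Gamma)$. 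Since a skew field is Noetherian, the homology modules are automatically finitely generated, and their $D(\Gamma)$-dimensions are integers.

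The main obstacle, and the precise reason the conjecture is still open, lies in showing that $D(\Gamma)$ is a skew field for every torsion-free $\Gamma$. This step has been carried out for substantial classes, namely free groups, torsion-free elementary amenable groups, Linnell's class $\mathcal{C}$, residually torsion-free solvable groups, and, by a theorem of Jaikin-Zapirain, locally indicable groups, but no single argument is known to cover all torsion-free groups. A secondary difficulty is that outside the residually finite world L\"uck-type approximation by integer-valued Betti numbers of finite covers is unavailable, so integrality cannot simply be inherited from the finite covers. Any serious attempt at the general case would therefore need either a substantially new method of constructing $D(\Gamma)$, for instance via universal localisation, or a new approximation scheme for $L^2$-Betti numbers beyond the residually finite setting.
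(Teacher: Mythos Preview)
Your assessment is correct in the most important respect: the statement is the Atiyah conjecture, and the paper does \emph{not} prove it. It is displayed in the paper explicitly as a \texttt{conjecture} environment, introduced only to motivate the analogous $p$-adic Atiyah question, and the surrounding text says that it ``is known for certain classes of torsion-free groups, but is open in general.'' There is therefore no ``paper's own proof'' to compare against.

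Given that, your write-up is appropriate: you correctly identify the problem as open and you sketch the Linnell strategy (embedding $\bbQ\Gamma$ into a division ring $D(\Gamma)$ inside $\mathcal{U}(\Gamma)$ so that the von Neumann dimension agrees with the $D(\Gamma)$-dimension), and you correctly isolate the genuine obstruction, namely that one does not know $D(\Gamma)$ is a skew field for arbitrary torsion-free $\Gamma$. This goes somewhat beyond what the paper says---the paper only cites Jaikin-Zapirain's recent progress and refers the reader to L\"uck's book for the precise formulation---but nothing you write is wrong, and it is a fair summary of the state of the art. Just be aware that in the context of this paper no proof was expected; the conjecture is background, not a result.
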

This conjecture goes back to a question of Atiyah concerning the values of $L^2$-Betti numbers
of compact Riemannian manifolds; see \cite{Atiyah76}.  The formulation given here is only a special
case of a more general algebraic formulation of the conjecture; for the precise statement we refer to Conjecture 10.2 in~\cite{LückBook}.
The Atiyah conjecture is known for certain classes of torsion-free groups, but is
open in general. Recently A.~Jaikin-Zapirain
obtained significant new results concerning the general conjecture; see \cite{Jaikin18}.

\medskip

We will study the $p$-adic Atiyah question in Section~\ref{sec:p-adic-atiyah}. As for the Atiyah conjecture
a purely algebraic formulation will play an important role. We shall prove the following results.
\begin{theorem}\label{thm:p-adic-atiyah-abelian-1}
  Let $X$ be a connected CW-complex with finite $(j+1)$-skeleton.
  Let $(\phi,G)$ be a virtual pro-$p$ completion of $\Gamma = \pi_1(X)$.
  If $G$ is virtually abelian,
  then
  $$b_j^{[p]}(X;\phi,k) \in \bbZ.$$
\end{theorem}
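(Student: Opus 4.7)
The plan is to reduce to the case $G = \bbZ_p^d$ via virtual invariance, and then exploit the structure of the completed group algebra of an abelian pro-$p$ group, which is a regular local Noetherian domain.

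Since $G$ is virtually pro-$p$ and virtually abelian, the intersection of an open pro-$p$ subgroup with an open abelian subgroup is an open pro-$p$ abelian subgroup; passing to a further open subgroup, one may assume it is isomorphic to $\bbZ_p^d$. By the virtual invariance of $p$-adic Betti numbers (\S\ref{par:virtual-invariance}), I would replace $X$ by the finite sheeted cover corresponding to its preimage under $\phi$ and $\phi$ by its restriction, thereby assuming $G = \bbZ_p^d$ throughout. Next, invoke Theorem \ref{thm:intro-approximation} with the chain $G_n = p^n G$: the quotients $Q_n = \Gamma/\phi^{-1}(G_n)$ are finite abelian $p$-groups embedded in $(\bbZ/p^n)^d$. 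Because $|Q_n|$ is a $p$-power and $\mathrm{char}(k)\in\{0,\ell\}$ with $\ell\neq p$, the group algebra $\bar{k}[Q_n]$ is semisimple, yielding the character decomposition
$$b_j\bigl(\widetilde{X}/\phi^{-1}(G_n); k\bigr) = \sum_{\chi\in\widehat{Q_n}} \dim_{\bar{k}} H^j(X; L_\chi),$$
where $L_\chi$ denotes the rank-one $\bar{k}$-local system on $X$ attached to $\chi$ via $\phi$.

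These cohomology dimensions arise as the fibers at $\chi$ of a finitely generated module $M_j$ over the appropriate Iwasawa algebra $\Lambda = k_p[[\bbZ_p^d]]$ (with $k_p$ a $p$-adic completion of $k$ adapted to the construction of the $p$-adic Betti numbers). For $G = \bbZ_p^d$, the algebra $\Lambda$ is isomorphic to a formal power series ring in $d$ variables, hence a regular local Noetherian domain; in particular $M_j$ has a well-defined generic rank $r_j\in\bbN$ over $\mathrm{Frac}(\Lambda)$, which is the natural candidate value of $b_j^{[p]}(X;\phi,k)$. Writing $M_j$ up to pseudo-isomorphism as $\Lambda^{r_j}\oplus T_j$ with $T_j$ a torsion $\Lambda$-module supported on a proper closed subscheme $Z\subset\mathrm{Spec}(\Lambda)$, one obtains
$$b_j\bigl(\widetilde{X}/\phi^{-1}(G_n); k\bigr) = r_j\cdot |Q_n| + \sum_{\chi\in\widehat{Q_n}}\dim_{\bar{k}}\bigl(T_j\otimes_\Lambda \bar{k}_\chi\bigr),$$
and the first summand vanishes in $\bbZ_p$ because $|Q_n|$ is an unbounded power of $p$.

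The technical heart of the proof is showing that the remaining sum converges $p$-adically to an integer. I would stratify $T_j$ by the Krull dimension of its support: an $i$-dimensional stratum of $Z$ contains at most $O(p^{ni})$ characters of $\widehat{Q_n}$ with uniformly bounded multiplicities, so each stratum with $i\geq 1$ contributes a sum divisible by a diverging power of $p$ and therefore vanishes in the $p$-adic limit. Only the zero-dimensional part of $Z$ survives, yielding a finite integer correction to $r_j$. Implementing this stratification effectively -- in particular, bounding the multiplicities on positive-dimensional strata uniformly in $n$ -- is the main obstacle, and I would expect to handle it either by induction on $d$, descending to the Iwasawa sub-algebras attached to the strata, or by invoking the structure theorem for finitely generated modules over regular local rings. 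The conclusion is that $b_j^{[p]}(X;\phi,k)$ equals $r_j$ plus a finite integer correction, and in particular lies in $\bbZ$.
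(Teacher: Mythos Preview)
Your reduction to $G=\bbZ_p^d$ via virtual invariance and the character decomposition of $b_j(\widetilde{X}/\phi^{-1}(G_n);k)$ match the paper. The divergence begins with the Iwasawa-algebra step, and there are two genuine gaps.

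\medskip

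\textbf{The algebra $\Lambda$ is not what you claim.} The identification $\bbZ_p[[\bbZ_p^d]]\cong\bbZ_p[[T_1,\dots,T_d]]$ rests on the congruence $(1+T)^{p^n}\equiv 1+T^{p^n}\bmod p$, which has no analogue when the residue characteristic of the coefficient ring is prime to $p$. For $k=\bbF_\ell$ with $\ell\neq p$ the algebras $\bbF_\ell[\bbZ/p^n\bbZ]$ are semisimple products of fields, and their inverse limit is neither local nor a domain; no ``$p$-adic completion $k_p$ of $k$'' repairs this. Even for $k=\bbQ$, passing to $\bbZ_p$-coefficients is possible but then the comparison with $\bbQ$-Betti numbers requires an extra argument. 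In short, the structure theory of modules over a regular local ring is unavailable for the coefficient fields at hand, and this is precisely why the paper avoids completed group algebras and works instead with the smooth representation $C^\infty(G,k)$ and the Laurent polynomial ring $k[t_1^{\pm1},\dots,t_{d+s}^{\pm1}]$.

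\medskip

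\textbf{The stratification step does not give integrality.} Your claim that an $i$-dimensional stratum with $i\geq 1$ ``contributes a sum divisible by a diverging power of $p$'' does not follow from the estimate ``at most $O(p^{ni})$ characters''. A sequence of counts of order $p^{ni}$ can perfectly well converge $p$-adically to a non-integer in $\bbZ_p$ (think of $1+p+\cdots+p^{n}\to 1/(1-p)$). What is actually needed is that the set of $p$-power torsion characters lying on the support of the torsion module has a very special shape: it must be a finite union of cosets of subgroups of $\mu(p^\infty)^d$. This is exactly the content of the Lang conjecture for tori (Laurent's theorem in characteristic~$0$, Poizat--Gramain in positive characteristic), and it is the decisive input in the paper's proof. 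The paper identifies $\dim_k\ker(r(A))^{G_N}$ with $\sum_i |V(J_i)\cap E\cap\mu(p^N)^{d+s}|$, where the $J_i$ are the minor ideals of a matrix $A$ over a Laurent polynomial ring and $E$ is a subgroup encoding the completion $\phi$; the Lang-type theorem then says each $V(J_i)\cap\mu(p^\infty)^{d+s}$ is a finite union of torsion cosets, and an elementary inclusion--exclusion (Lemma~\ref{lem:mann-property}) shows the $p$-adic limit of such a count is an integer. Without that Diophantine input, neither your approach nor any purely module-theoretic argument is known to conclude.
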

\begin{theorem}\label{thm:p-adic-atiyah-wedge}
  Let $X = X_1 \vee X_2 \vee \dots \vee X_n$ be a CW-complex, which is
  a wedge sum of finite connected CW-complexes $X_i$ with virtually abelian fundamental groups.
  Then all $p$-adic Betti numbers of $X$ are integers.
\end{theorem}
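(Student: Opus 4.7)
The plan is to reduce to Theorem~\ref{thm:p-adic-atiyah-abelian-1} applied to each factor $X_i$, by analyzing the finite covers appearing in the Approximation Theorem via the Mayer--Vietoris sequence for the wedge. To begin, I would restrict $\phi\colon \Gamma = \Gamma_1*\cdots*\Gamma_n \to G$ to each factor: set $H_i := \overline{\phi(\Gamma_i)} \leq G$ and $\phi_i := \phi|_{\Gamma_i}\colon \Gamma_i \to H_i$. The profinite group $H_i$ is virtually pro-$p$ as a closed subgroup of $G$, and is virtually abelian: if $A_i \leq \Gamma_i$ is a finite-index abelian subgroup, then $\overline{\phi(A_i)}$ is abelian (closures of abelian subgroups of topological groups are abelian) and still of finite index in $H_i$ by taking closures of cosets. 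Thus Theorem~\ref{thm:p-adic-atiyah-abelian-1} applies and yields $b_j^{[p]}(X_i;\phi_i,k) =: \beta_{i,j} \in \bbZ$.

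Next I would express the Betti numbers of $Y_\ell := \widetilde X/\phi^{-1}(G_\ell)$ in terms of those of the covers $Y_{i,\ell} := \widetilde X_i/(\Gamma_i \cap \phi^{-1}(G_\ell))$. Pick an open normal pro-$p$ subgroup $P \trianglelefteq G$ (the core of any open pro-$p$ subgroup) and choose the chain $(G_\ell)$ of open normal subgroups of $G$ to lie entirely in $P$, which is permitted by Theorem~\ref{thm:intro-approximation}. Write $\Delta_\ell := \phi^{-1}(G_\ell)$, $d_\ell := [\Gamma:\Delta_\ell]$, and $m_{i,\ell} := [\Gamma:\Delta_\ell\Gamma_i] = d_\ell/[\Gamma_i : \Gamma_i \cap \Delta_\ell]$. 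Normality of $\Delta_\ell$ forces every connected component of $p_\ell^{-1}(X_i)$ to be homeomorphic to $Y_{i,\ell}$, and there are $m_{i,\ell}$ such components. The cover $Y_\ell$ is the pushout of these components along the fiber $V = p_\ell^{-1}(x_0)$ of cardinality $d_\ell$, and the associated Mayer--Vietoris sequence yields
\begin{align*}
b_j(Y_\ell;k) &= \sum_{i=1}^n m_{i,\ell}\,b_j(Y_{i,\ell};k) \qquad \text{for } j\ge 2,\\
b_1(Y_\ell;k) &= \sum_{i=1}^n m_{i,\ell}\,b_1(Y_{i,\ell};k) + (n-1)d_\ell - \sum_{i=1}^n m_{i,\ell} + 1,
\end{align*}
and $b_0(Y_\ell;k) = 1$.

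The main obstacle is to show that $d_\ell$ and each $m_{i,\ell}$ converge $p$-adically to integers. Because $G_\ell \subseteq P$, the quotient $(\phi(\Gamma) \cap P)/(\phi(\Gamma) \cap G_\ell)$ embeds as a subgroup of the finite $p$-group $P/G_\ell$, so $[\phi(\Gamma) \cap P : \phi(\Gamma) \cap G_\ell] = p^{e_\ell}$ for a non-decreasing sequence $e_\ell \ge 0$, giving $d_\ell = c_1 p^{e_\ell}$ with $c_1 := [\phi(\Gamma):\phi(\Gamma) \cap P]$ independent of $\ell$. Similarly $d_{i,\ell} = c_{i,1}\,p^{e_{i,\ell}}$, and the inclusion $\phi(\Gamma_i)P/P \hookrightarrow \phi(\Gamma)P/P$ inside $G/P$ shows $c_{i,1}$ divides $c_1$. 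Hence
\[
m_{i,\ell} \;=\; \frac{c_1}{c_{i,1}}\cdot p^{e_\ell - e_{i,\ell}}
\]
has a fixed $p'$-part. Since $m_{i,\ell}$ is non-decreasing in $\ell$ (as $\Delta_\ell$ shrinks, more components appear), the $p$-power factor $p^{e_\ell - e_{i,\ell}}$ is non-decreasing too, so $m_{i,\ell}$ either stabilizes at an integer or tends to $0$ in $\bbZ_p$. Thus $\mu_i := \lim_\ell m_{i,\ell} \in \bbZ$, and the analogous argument gives $\delta := \lim_\ell d_\ell \in \bbZ$.

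To finish, $|m_{i,\ell}|_p \le 1$ and $b_j(Y_{i,\ell};k) \to \beta_{i,j}$ in $\bbZ_p$ together imply $m_{i,\ell}\,b_j(Y_{i,\ell};k) \to \mu_i\beta_{i,j}$. Passing to the $p$-adic limit in the Mayer--Vietoris identities gives
\[
b_j^{[p]}(X;\phi,k) = \sum_{i=1}^n \mu_i\,\beta_{i,j} \in \bbZ \quad (j\ge 2),
\]
\[
b_1^{[p]}(X;\phi,k) = \sum_{i=1}^n \mu_i\,\beta_{i,1} + (n-1)\delta - \sum_{i=1}^n \mu_i + 1 \in \bbZ,
\]
and $b_0^{[p]}(X;\phi,k) = 1$. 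All $p$-adic Betti numbers of $X$ are integers.
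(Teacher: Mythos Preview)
Your proof is correct and follows essentially the same route as the paper. The paper packages your Mayer--Vietoris analysis of the finite covers into the wedge-sum formula of \S\ref{par:wedge-sums} and the integrality of the limits $\mu_i=\lim_\ell m_{i,\ell}$, $\delta=\lim_\ell d_\ell$ into the $p$-adic index identity~\eqref{eq:index-limit} (so that $\mu_i=\Vert G:H_i\Vert$ and $\delta=\Vert G\Vert$ are integers by definition); it then invokes Theorem~\ref{thm:p-adic-atiyah-abelian-1} for each factor via Lemma~\ref{lem:wedge-sums-integers}. You have simply unpacked these two ingredients by hand, arriving at the same formulas and the same conclusion.
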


\subsubsection*{Applications}
Applications to cohomology growth provide another incentive to investigate the $p$-adic Atiyah question.
In fact, if the $p$-adic Betti numbers are integers, then there is a useful dichotomy concerning the
possible growth rates of ordinary Betti numbers in towers of finite sheeted covering spaces.
The sequence of Betti numbers either stabilizes at the $p$-adic Betti number or grows relatively fast.
Our most general result in this direction is
Theorem \ref{thm:dichotomy-Frattini} below.

We illustrate this for the special case of $p$-adic analytic towers.
Let $X$ be a finite connected CW-complex $X$ with $\pi_1(X) = \Gamma$.
For every homomorphism $\phi \colon \Gamma \to \GL_m(\bbZ_p)$ we obtain
an associated chain of principal congruence subgroups
$$\Gamma_n = \phi^{-1}\Bigl( \ker\bigl(\GL_m(\bbZ_p) \to \GL_m(\bbZ/p^n\bbZ)\bigr) \Bigr).$$
Note that the closure $G = \overline{\phi(\Gamma)}$ of the image of $\phi$ is a $p$-adic Lie group.
\begin{theorem}\label{thm:p-adic-analytic-towers}
  For a chain $\Gamma_n$ as above, assume that
  $b_j^{[p]}(X;\phi,k) \in \bbZ$. Then
  either the sequence $b_j(\widetilde{X}/\Gamma_n;k)$ stabilizes at $b_j^{[p]}(X;\phi,k)$
  or
  $$ b_j(\widetilde{X}/\Gamma_n;k) \geq \kappa |\Gamma:\Gamma_n|^{1/d} + b_j^{[p]}(X;\phi,k)$$
  for all sufficiently large $n$ and some constant $\kappa > 0$ where
   $d = \dim(G)$.
\end{theorem}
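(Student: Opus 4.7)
The plan is to derive the theorem from the general Frattini dichotomy (Theorem~\ref{thm:dichotomy-Frattini}) by checking that its hypotheses hold for the principal congruence filtration inside a compact $p$-adic Lie group. First, let $G = \overline{\phi(\Gamma)} \subseteq \GL_m(\bbZ_p)$ and write $G_n$ for the intersection of $G$ with the $n$-th principal congruence subgroup of $\GL_m(\bbZ_p)$. Then $(G_n)$ is a descending chain of open normal subgroups of $G$ with trivial intersection, and $\phi^{-1}(G_n) = \Gamma_n$. Since $\phi(\Gamma)$ is dense in $G$ and $G_n$ is open, one has $\phi(\Gamma)\, G_n = G$, hence $|\Gamma:\Gamma_n| = |G:G_n|$.

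The next step is a quantitative control of this index via $p$-adic Lie theory. By Lazard's theorem (see, e.g., Dixon--du Sautoy--Mann--Segal), after discarding finitely many initial levels or, if necessary, passing to an open uniform pro-$p$ subgroup of $G$, the successive quotients $G_n/G_{n+1}$ are elementary abelian $p$-groups of rank exactly $d = \dim G$. Consequently $|G:G_n|$ is comparable to $p^{nd}$ up to a bounded multiplicative constant, so that $|\Gamma:\Gamma_n|^{1/d}$ is comparable to $p^n$. Moreover, the elementary abelian structure forces $\Phi(G_n) \subseteq G_{n+1}$ for all large $n$, so the chain $(\Gamma_n)$ is a Frattini tower in the sense required by Theorem~\ref{thm:dichotomy-Frattini}. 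Applying that dichotomy, either $b_j(\widetilde{X}/\Gamma_n;k)$ eventually equals $b_j^{[p]}(X;\phi,k)$, or the deviation $b_j(\widetilde{X}/\Gamma_n;k) - b_j^{[p]}(X;\phi,k)$ grows at least like $\kappa' p^n$ for some $\kappa' > 0$. Combining this with the index estimate yields the claimed bound.

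The main obstacle is the compatibility of the principal congruence filtration with the Frattini hypothesis of Theorem~\ref{thm:dichotomy-Frattini}. Generically this is a standard application of $p$-adic Lie theory, but the first few levels may behave irregularly, and one may need to replace $G$ by a finite-index uniform pro-$p$ subgroup in order to obtain the clean elementary abelian structure of the successive quotients. Since the assertion is asymptotic in $n$, these finite modifications can be absorbed into the positive constant $\kappa$ in the final inequality.
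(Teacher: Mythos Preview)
Your overall strategy matches the paper's: reduce to a uniform pro-$p$ subgroup, invoke the Frattini dichotomy of Theorem~\ref{thm:dichotomy-Frattini}, and then convert the resulting $p^n$-bound into an index bound via $|G:G_n|\asymp p^{nd}$. However, the step where you link the congruence filtration to the Frattini series is backwards. The inclusion $\Phi(G_n)\subseteq G_{n+1}$, which follows from the elementary abelian structure of $G_n/G_{n+1}$, only yields $\Phi^n(G_0)\subseteq G_n$. This is precisely the direction that does \emph{not} let you apply the proof of Theorem~\ref{thm:dichotomy-Frattini}: if an irreducible representation $\rho$ fails to factor through $G_n$, you cannot conclude that it fails to factor through $\Phi^n(G_0)$, and hence you obtain no lower bound on $\calF(\rho(G_0))$ or on $\nu_p(\dim\rho)$.

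What is actually required is the reverse containment $G_n\subseteq\Phi^{\,n-c}(U)$ for some uniform open $U\leq_o G$ and a fixed constant $c$, together with the easy inclusion $\Phi^{\,n+c'}(U)\subseteq G_n$. The paper obtains this two-sided sandwich explicitly: choosing $U\subseteq\GL_m^2(\bbZ_p)$ uniform with $\log(U)=\fu$ and picking $r$ with $\fu\cap p^r\gl_m(\bbZ_p)\subseteq p\fu$, one gets $\Phi^{n+r}(U)\subseteq G\cap\GL_m^{n+r}(\bbZ_p)\subseteq\Phi^{n+1}(U)$. The Frattini dichotomy is then applied to the chain $\Delta_n=\phi^{-1}(\Phi^n(U))$, and the bound is transferred to $\Gamma_n$ using the sandwich together with the monotonicity of Betti numbers coming from Lemma~\ref{lem:exact-invariants}(a) (since $b_j(\widetilde{X}/\Gamma_K;k)=\dim_k\bar H^j(X;\phi,k)^K$ is decreasing in $K$). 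Your sketch omits both the correct inclusion and the monotonicity argument needed to pass between the two filtrations; once these are supplied, the rest of your outline goes through.
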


Once again there is an $L^2$-analogue which reveals the antagonistic behaviour of the $p$-adic invariants.
Assume for simplicity that $\phi\colon \Gamma \to \GL_m(\bbZ_p)$ is injective, then
a theorem of Bergeron, Linnell, L\"uck and Sauer (see Thm.~1 in \cite{BLLS14})
yields 
  $$ b_j(\widetilde{X}/\Gamma_n;\bbQ) = b_j^{(2)}(X) |\Gamma:\Gamma_n| + O(|\Gamma:\Gamma_n|^{1-1/d})$$
  as $n$ tends to infinity where $d = \dim(G)$.  Their result is based on Iwasawa theory and a
  similar result holds for homology with $\bbF_p$-coefficients.  Remarkably, their methods do
  not apply to homology with coefficients in $\bbF_\ell$ for primes $\ell \neq p$, whereas our
  methods fail for $k = \bbF_p$.  Similar Iwasawa methods have been applied by Calegari and
  Emerton to prove asymptotic upper bounds for multiplicities of cohomological automorphic
  representations; see \cite{CalegariEmerton09}.  We hope that a better understanding of the
  $p$-adic Atiyah question provides corresponding asymptotic lower bounds.

   \subsection{Outlook and open problems}
  The $p$-adic invariants appear here for the first time and there are several loose ends and
  open questions.  Here is a short list of issues, beyond the $p$-adic Atiyah question, that
  might deserve attention in future research.
  
  \subsubsection{More examples}
  The list of examples for which we can actually compute $p$-adic invariants is quite small.
  At the moment, our main method for computing $p$-adic invariants is the Approximation Theorem
  which, however, seems difficult to apply in more complicated examples.
  For instance, it would be highly interesting to study the $p$-adic Betti numbers of hyperbolic
  three-manifolds. Since closed hyperbolic three-manifolds virtually fibre over the circle \cite{Agol2013},
  this amounts to an investigation of mapping tori.
  
  \subsubsection{}
    \emph{Is there a connected finite aspherical CW-complex $X$ and two injective
  completions $(\phi_1,G_1)$ and $(\phi_2, G_2)$ of its fundamental group such that
  $b_j^{[p]}(X;\phi_1,k) \neq b_j^{[p]}(X;\phi_2,k)$ for some $j$?}
  The $p$-adic Betti numbers of $X$ with respect to two different virtual pro-$p$ completions
  $\phi_1$ and $\phi_2$ are distinct in general.  It is easy to find such examples if
  $\ker(\phi_1) \neq \ker(\phi_2)$; see \S\ref{par:dep-completion-1}.  Even if
  $\phi_1$ and $\phi_2$ are both injective, the $p$-adic Betti numbers can be distinct;
  see \S\ref{par:dep-completion-2}. However, we are not aware of an aspherical example.

  \subsubsection{Formulation in terms of $p$-adic operator algebras}
  Our construction of $p$-adic Betti numbers uses only a small amount of $p$-adic analysis and
  does not involve a $p$-adic companion of the group von Neumann algebra, which plays the
  central role in the $L^2$-theory. Is there a definition of the $p$-adic Betti numbers in
  terms of $p$-adic operator algebras?

  \subsubsection{Generalizing $p$-adic approximation}
  L\"uck's approximation theorem has been generalized in several directions \cite{ABBGNRS,
    BergeronGaboriau2004, Farber1998, Kionke18} and it would be interesting to know whether
  there are $p$-adic analogues of these generalizations.  For instance, is there a $p$-adic
  Farber condition which implies an approximation result like Theorem
  \ref{thm:intro-approximation} for chains of open subgroups
  $G \supseteq G_1 \supseteq G_2 \supseteq \dots$ which are not necessarily normal in~$G$?

  \subsection*{Organization of the article}
  \subsubsection*{Section \ref{sec:preliminaries}: Preliminaries and notation.}
  We define and review basic notions. We recall the notion of free profinite $\bbZ_p$-modules
  and define smooth admissible representations of profinite groups.
  
  \subsubsection*{Section \ref{sec:p-adic-dimension}: The $p$-adic dimension function.}
  In this section we define the K-group $\admK(\cga{k}{G})$ of smooth admissible
  $k$-representations of a pro-$p$ group $G$.  We construct the $p$-adic dimension function
  $$\pdim_k^G\colon \admK(\cga{k}{G}) \to \bbZ_p$$ which plays the key role in defining the
  $p$-adic Betti numbers.  We study the behaviour of the $p$-adic dimension with
  respect to restriction, inflation and induction of representations.

  \subsubsection*{Section \ref{sec:p-adic-card}: The $p$-adic cardinality function.}
  We introduce the admissible Burnside ring $\admA(G)$ of a virtually pro-$p$ group $G$.
  We construct a $p$-adic cardinality function
  $\pcard^G\colon \admA(G) \to \bbZ_p$ which will be used to define the $p$-adic torsion.
  We study the effect of restriction and inflation in the
  admissible Burnside ring.

  \subsubsection*{Section \ref{sec:the-invariants}:  $p$-adic topological invariants.}
  This is the main part of the article.
  We define the $p$-adic Betti numbers and the $p$-adic torsion and we verify that these are homotopy invariants.
  The main result is the Approximation Theorem \ref{thm:approximation}.
  It will be used to establish
  a K\"unneth formula, Poincar\'e duality, a formula for wedge sums and formula for
  the $p$-adic Euler characteristic.

  \subsubsection*{Section \ref{sec:examples}: Examples.}
  Here we discuss a number of examples: tori, surfaces, free groups and the infinite cyclic
  coverings of knot complements.  We illustrate that the $p$-adic Betti numbers depend on the chosen
  completion and we study the $p$-adic torsion of (free abelian)-by-cyclic groups to show
  that the torsion can take transcendental values.
  
  \subsubsection*{Section \ref{sec:p-adic-atiyah}: The $p$-adic Atiyah question.}
  In the last section we discuss the $p$-adic Atiyah question (as explained in the
  introduction) and we explain the applications concerning the growth of Betti numbers.

  \subsection*{Acknowledgements}
I would like to thank Anton Clau{\ss}nitzer and Holger Kammeyer.
Anton was so kind to explain some results of his Ph.D.\ thesis to me.
Holger patiently answered my questions about knots.

\section{Preliminaries and Notation}\label{sec:preliminaries}

\subsection{}
The symbols $\bbN, \bbN_0, \bbZ$ and $\bbQ$ denote the set of natural numbers, the set of non-negative integers, the ring of integers and the field of rational numbers respectively.
For a prime number $p$ the ring of $p$-adic integers is denoted by $\bbZ_p$ and the field
of $p$-adic numbers by $\bbQ_p$. The $p$-adic absolute value of $x \in \bbQ_p$ is  $|x|_p = p^{-\nu_p(x)}$
where $\nu_p$ denotes the $p$-adic valuation.
All rings considered below are assumed to be associative and to have a unit.

\subsection{Profinite groups}
A profinite group $G$ is a compact totally disconnected Hausdorff topological group. Every profinite group
has a neighborhood base of open normal subgroups and is isomorphic to an inverse limit of finite groups.
A profinite group $G$ is a \emph{pro-$p$ group} if it is isomorphic to an inverse limit of finite $p$-groups.
A profinite group is called \emph{finitely generated} if it has a finitely generated dense subgroup.

Let $G$ be a profinite group and let $W$ be a set.
The space of locally constant $W$-valued functions on $G$ will be denoted by 
$C^\infty(G,W)$. This notation will also be used for a finite groups $G$.

\begin{definition}
  Let $G$ be a profinite group.
  An \emph{exhaustive chain} $(N_n)_{n \in \bbN}$ is a decreasing sequence $N_1 \supseteq N_2 \supseteq N_3 \dots$
  of closed normal subgroups $N_n \normal_c G$
  such that $\bigcap_{n \in \bbN} N_n = \{1\} $.
  We note that every \emph{finitely generated}
  profinite group has an exhaustive chain consisting of \emph{open} subgroups.
\end{definition}

\subsection{Virtually pro-$p$ groups and the $p$-adic index}\label{par:index-convention}
Let $p$ be a prime number and let $G$ be a profinite group which is virtually a pro-$p$ group, this means, that $G$ has an open pro-$p$ subgroup.
We sometimes need to interpret the order of $|G|$ or, more generally, the index $|G : K|$ of a closed subgroup $K \leq_c G$ as a $p$-adic number.
We define $\Vert G : K \Vert \in \bbZ_p$ as
$$\Vert G:K \Vert = \begin{cases}  |G:K| \quad & \text{ if $K$ is open in $G$}\\
  0  & \text{ otherwise. } \end{cases}$$
We also write $\Vert G \Vert = \Vert G:\{1\}\Vert$,

Observe that whenever $(N_n)_{n \in \bbN}$ is an exhaustive chain of \emph{open} subgroups in $G$, then
$(|G:N_nK|)_{n\in \bbN}$ is a convergent sequence of $p$-adic numbers and
\begin{equation}\label{eq:index-limit}
  \lim_{n \to \infty} |G:N_nK| = \left\Vert G: K \right\Vert \in \bbZ_p.
\end{equation}

\subsection{Profinite $\bbZ_p$-modules and algebras}
Let $p$ be a prime number.
A \emph{profinite $\bbZ_p$-module} $M$ is a compact Hausdorff topological $\bbZ_p$-module
which admits a neighborhood base of $0$ consisting of open submodules. Equivalently, $M$ is isomorphic to
an inverse limit of finite $\bbZ_p$-modules.  A \emph{profinite $\bbZ_p$-algebra} $A$ is a
commutative compact Hausdorff topological $\bbZ_p$-algebra which admits a neighborhood base
of $0$ which consists of open ideals; equivalently, $A$ is an inverse limit of finite
$\bbZ_p$-algebras.

\subsection{Free  profinite $\bbZ_p$-modules}\label{sec:free-profinite-module}
Let $I$ be a countable set and $M$ a profinite $\bbZ_p$-module.
A map $f\colon I \to M$ is called \emph{$0$-convergent}
if for every open submodule  $U\leq_o M$ all but finitely many $i \in I$ satisfy
$f(i) \in U$.

Let $\calM(I,\bbZ_p)$ denote the set of functions from $I$ to $\bbZ_p$.
Equipped with the topology of pointwise convergence $\calM(I,\bbZ_p)$
is a profinite $\bbZ_p$-module.

For convenience we fix the following notation.
Let $F \subseteq I$ be a finite set and let $\epsilon > 0$.
We define $\calU(F,\epsilon)$
to be the set of all $\eta$ such that $|\eta(i)|_p < \epsilon$
for all $i \in F$.
The set $\calU(F,\epsilon)$ is an open submodule of $\calM(I,\bbZ_p)$ and the collection of these
sets is a neighborhood base for $0$ in $\calM(I,\bbZ_p)$.

Write $\delta_i \in \calM(I,\bbZ_p)$ for the indicator function of $i \in I$. In fact,
$\calM(I,\bbZ_p)$ is \emph{the free profinite $\bbZ_p$-module} over $I$.  In other words,
$\calM(I,\bbZ_p)$ has the following universal property.
\begin{lemma*}
  For every profinite $\bbZ_p$-module $M$ and every $0$-convergent map $f\colon I \to M$
  there exists a unique homomorphism $\widetilde{f} \colon \calM(I,\bbZ_p) \to M$ of profinite $\bbZ_p$-modules
  such that $\widetilde{f}(\delta_i) =  f(i)$ for all $i \in I$.
\end{lemma*}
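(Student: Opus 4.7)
The plan is to define $\widetilde{f}(\eta) \in M$ as the limit of the net $S_F = \sum_{i \in F} \eta(i) f(i)$ indexed by the directed set of finite subsets $F \subseteq I$, ordered by inclusion, and then to verify $\bbZ_p$-linearity, continuity, and the identity $\widetilde{f}(\delta_i)=f(i)$.

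Uniqueness I would treat first, as it is the easier half. The $\bbZ_p$-span of $\{\delta_i : i \in I\}$, i.e.\ the set of finitely supported functions, is dense in $\calM(I,\bbZ_p)$: any $\eta \in \calM(I,\bbZ_p)$ is approximated by its truncations $\sum_{i \in F} \eta(i)\delta_i$, because for any basic neighborhood $\calU(F,\epsilon)$ every truncation over a finite $F' \supseteq F$ agrees with $\eta$ on $F$. Since two continuous homomorphisms of profinite $\bbZ_p$-modules that coincide on a dense subset are equal, uniqueness follows at once.

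For existence, I would show that the net $(S_F)_F$ converges in $M$. Using $M = \varprojlim_U M/U$ over the open submodules $U \leq_o M$, it suffices to check that the net is eventually constant modulo every such $U$. The $0$-convergence of $f$ provides a finite $F_0 \subseteq I$ with $f(i) \in U$ for all $i \notin F_0$, and consequently $S_F \equiv S_{F_0} \pmod{U}$ whenever $F$ is a finite set containing $F_0$. This produces the limit $\widetilde{f}(\eta)$; $\bbZ_p$-linearity is then inherited from the linearity of the partial sums together with the continuity of addition and scalar multiplication on $M$, and $\widetilde{f}(\delta_i)=f(i)$ is immediate. For continuity, given $U \leq_o M$ and a corresponding $F_0$, the map $\bbZ_p^{F_0} \to M$, $(a_i)_{i \in F_0} \mapsto \sum_{i \in F_0} a_i f(i)$, is continuous on a finite free module, so there is $\epsilon > 0$ with $\widetilde{f}(\calU(F_0,\epsilon)) \subseteq U$.

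I do not expect a serious obstacle. The only point that requires genuine care is ensuring that the sum $\sum_i \eta(i) f(i)$ is well-defined independently of any ordering of the countable set $I$; working throughout with the directed net over finite subsets of $I$ (rather than fixing an enumeration of $I$ as a sequence) sidesteps this and keeps the interplay between the $0$-convergence of $f$ and the completeness of $M$ in its open-submodule topology entirely transparent.
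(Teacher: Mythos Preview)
Your argument is correct and follows essentially the same route as the paper: uniqueness via density of finitely supported functions, and existence by observing that modulo each open submodule $U\leq_o M$ only the finitely many $f(i)\notin U$ contribute. The only cosmetic difference is packaging: the paper assembles the maps $\widetilde{f}_U\colon \alpha\mapsto \sum_{f(i)\notin U}\alpha(i)f(i)+U$ directly via the universal property of $M\cong\varprojlim_U M/U$ (so continuity is automatic), whereas you phrase the same computation as convergence of the net $(S_F)_F$ and then verify continuity by hand.
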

\begin{proof}
  The $\bbZ_p$-submodule generated by $\{\delta_i \mid i \in I\}$ is the space of finitely
  supported functions on $I$. Since this submodule  is dense in $\calM(I,\bbZ_p)$ in the topology of pointwise
  convergence, the uniqueness is obvious.
  
  In order to construct $\widetilde{f}$ we define, for every open submodule $U \leq_o M$,
  $$\widetilde{f}_U(\alpha) = \sum_{\substack{i \in I \\ f(i) \not\in U} } \alpha(i) f(i) + U \in M/U$$
  for all $\alpha \in \calM(I,\bbZ_p)$.
  The maps $\widetilde{f}_U$ form a compatible family of linear maps into the finite quotients
  of $M$. Since $M \cong \varprojlim_{U \leq_o M} M/U$ we can use the universal property of the inverse limit
  to obtain the desired map $\widetilde{f}$.
\end{proof}

\subsection{Smooth representations of profinite groups}\label{par:smooth-representations}
  Let $R$ be a commutative ring and let
  $G$ be a profinite group. A \emph{smooth representation $(\rho,V)$} (or simply $V$)  of $G$ over $R$ is a
  representation $\rho\colon G \to \Aut_R(V)$ of $G$ on an $R$-module $V$ such that
  the stabilizer of every element $v\in V$ is open in $G$. 
  We say that $V$ is \emph{admissible}, if for every open subgroup $H \subseteq_o G$
  the space $V^H$ of $H$-invariant vectors is finitely generated.
  
  For example, the $R$-module of locally constant functions $C^\infty(G,R)$ carries two
  commuting smooth admissible representations of $G$.  The \emph{left regular representation}
  $l$ of $G$ on $C^\infty(G,R)$ is defined as
  $$\up{{l(g)}}f(x) = f(g^{-1}x)$$
  for all $g,x \in G$ and all $f \in C^\infty(G,R)$,
  The \emph{right regular representation} $r$
  is defined by
  $$\up{{r(g)}}f(x) = f(xg)$$
  for all $g,x \in G$ and all $f \in C^\infty(G,R)$.

\subsection{Induction of smooth representations}\label{par:induction}
Let $G$ be a profinite group and let $H \leq_c G$ be a closed subgroup.
Let $(\sigma, W)$ be a smooth representation of $H$ over $R$.
The representation \emph{(smoothly) induced} by $\sigma$ from $H$ to $G$
is the representation of $G$ on the space
$$\Ind_H^G(W) = \{f \in C^\infty(G,W) \mid (\forall h \in H, x \in G) \; f(xh) = \sigma(h^{-1}) f(x) \}$$
defined by the left regular action
$$\up{{l(g)}}f(x) = f(g^{-1}x).$$
Since every locally constant function $f \in C^\infty(G,W)$
is invariant under some open normal subgroup $N \normal_o G$,
it follows that $\Ind_H^G(W)$ is smooth.

\begin{lemma*}
  (a) Let $N \normal_o G$ be an open normal subgroup.
  Then the space of $N$-invariants in $\Ind_H^G(W)$
  is canonically isomorphic to $\Ind_{HN/N}^{G/N}(W^{H\cap N})$
  where $W^{H \cap N}$ denotes the representation of $HN/N \cong H/(H \cap N)$ on the space
   of $(H\cap N)$-invariants in $W$.

  (b) If $(\sigma, W)$ is admissible, then so is $\Ind_H^G(W)$.

  (c) For every exhaustive chain $(N_n)_{n\in \bbN}$ of open normal subgroups in $G$
  we have
  \[\Ind_H^G(W)  = \varinjlim_{n \in \bbN} \Bigl(\Ind_{HN_n/N_n}^{G/N_n}(W^{H\cap N_n}) \Bigr).\]
\end{lemma*}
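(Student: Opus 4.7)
The plan is to treat the three parts in the order (a), (b), (c), using direct inspection for (a), a Mackey-type decomposition of the $K$-invariants for (b), and a compactness argument combined with (a) for (c). The main obstacle will be setting up the Mackey decomposition cleanly in (b); everything else amounts to unpacking the definition of smooth induction.

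For (a), I would define the candidate isomorphism
$\Phi\colon \Ind_H^G(W)^N \to \Ind_{HN/N}^{G/N}(W^{H\cap N})$ by $\Phi(f)(xN) = f(x)$ and verify three points. First, $N$-invariance of $f$ under the left regular representation gives $f(nx) = f(x)$ for all $n \in N$; by normality of $N$ this also forces $f(xn) = f(x)$, so $f$ descends to $G/N$ and $\Phi(f)$ is well defined. Second, for $h \in H \cap N$ the transformation rule yields $f(xh) = \sigma(h^{-1})f(x)$, while $xh \in Nx$ gives $f(xh) = f(x)$; hence $f(x) \in W^{H\cap N}$. Third, the transformation law for $\Phi(f)$ under $HN/N \cong H/(H\cap N)$ and the $G/N$-equivariance of $\Phi$ follow directly from the definitions; the inverse map lifts a function on $G/N$ to a function on $G$, with local constancy automatic since $G/N$ is finite.

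For (b), the central step is to establish the Mackey-type isomorphism
$$\Ind_H^G(W)^K \cong \bigoplus_{x \in K\backslash G / H} W^{x^{-1}Kx \cap H}$$
for any open subgroup $K \leq_o G$. A $K$-invariant $f$ factors through the coset space $K\backslash G$, which is finite because $K$ is open in the compact group $G$; after choosing double coset representatives $x_1,\dots,x_r$, such an $f$ is determined by the values $f(x_i)$. The identity $x_i h \in K x_i \iff h \in x_i^{-1}K x_i \cap H$, combined with the transformation rule, forces $f(x_i) \in W^{x_i^{-1}Kx_i \cap H}$, and these values can be chosen freely. Each $x_i^{-1}Kx_i \cap H$ is open in $H$, so admissibility of $W$ makes every summand finitely generated, and admissibility of $\Ind_H^G(W)$ follows from finiteness of the direct sum.

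For (c), once (a) is in hand it suffices to show that the increasing union $\bigcup_n \Ind_H^G(W)^{N_n}$ exhausts $\Ind_H^G(W)$, with the transition maps in the direct limit identified via (a) with the inclusions of invariants. Smoothness of $\Ind_H^G(W)$ (noted just before the lemma statement) gives every element an open stabilizer, and a standard compactness argument shows that any open subgroup of $G$ contains some $N_n$: the complement is compact and disjoint from $\bigcap_n N_n = \{1\}$, hence already disjoint from a single $N_n$. Consequently every element is $N_n$-invariant for some $n$, finishing the proof. The only genuinely non-routine part is the bookkeeping with double cosets in (b); parts (a) and (c) are essentially formal.
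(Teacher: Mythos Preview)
Your proof is correct. Parts (a) and (c) proceed exactly as in the paper, only with more detail spelled out (the paper is content to note $f(x)=f(xn)=\sigma(n^{-1})f(x)$ for (a), and to invoke local constancy for (c)).

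The one genuine divergence is in (b). The paper does not set up a Mackey decomposition over arbitrary open $K$; instead it applies (a) directly: for any open \emph{normal} $N$, part (a) identifies $\Ind_H^G(W)^N$ with $\Ind_{HN/N}^{G/N}(W^{H\cap N})$, which as an $R$-module is a direct sum of $|G:HN|$ copies of the finitely generated module $W^{H\cap N}$. This is shorter and reuses (a), but it literally only treats normal open subgroups, so one must also observe (tacitly) that every open $K$ contains an open normal $N$ and hence $\Ind_H^G(W)^K\subseteq\Ind_H^G(W)^N$. Your double-coset argument is a little more bookkeeping but handles all open $K$ uniformly, without that extra reduction step. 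Either route is fine.
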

\begin{proof}
  Assertion (a) follows from the observation that for every $N$-invariant
  $f \in \Ind_H^G(W)$ the equality
  $$f(x) = f(xn) = \sigma(n^{-1}) f(x)$$
  holds for all $n \in H\cap N$ and all $x \in G$.

  Suppose that $(\sigma,W)$ is admissible,
  then $W^{H\cap N}$ is a finitely generated $R$-module for every $N \normal_o G$.  By (a) the
  $R$-module $\Ind_H^G(W)^N$ is isomorphic to a direct sum of $|G:HN|$ copies of
  $W^{N \cap H}$ and is therefore finitely generated.  Let $(N_n)_{n \in \bbN}$ be an exhaustive
  chain of open normal subgroups in $G$.  Then every locally constant function
  $f \in C^\infty(G,W)$ is invariant under $N_n$ for some $n \in \bbN$.  In particular, we see
  that
\[ \Ind_H^G(W) = \varinjlim_{n \in \bbN} \Ind_H^G(W)^{N_n}  \cong \varinjlim_{n \in \bbN} \Bigl(\Ind_{HN_n/N_n}^{G/N_n}(W^{H\cap N_n}) \Bigr). \]
\end{proof}

\section{The $p$-adic dimension function}\label{sec:p-adic-dimension}

\subsection{}
Fix a prime number $p$.
Throughout the letter $k$ will denote either $\bbQ$ or a finite field of characteristic $\chr(k) = \ell$ with
$\ell \neq p$. The purpose of this section is to define a $p$-adic-valued dimension function
for smooth admissible representations of finitely generated virtually pro-$p$ groups.

\subsection{}
The set of isomorphism classes of smooth irreducible $k$-representations of a profinite group
$G$ will be denoted by $\Irr_k(G)$.  Every smooth irreducible $k$-representation is finite
dimensional and the action of $G$ factors over some finite continuous quotient.
The class of $S$ will be denoted by $[S]$.

\subsection{}
Let $G$ be a pro-$p$ group. Recall that $p$ does not divide the characteristic of $k$.
Every smooth admissble $k$-representation $V$ of $G$ decomposes as a direct sum 
$$ V \cong \bigoplus_{[S] \in \Irr_k(G)} m(V,S) S $$
where each $[S] \in \Irr_k(G)$ occurs with a finite multiplicity $m(V,S) \in \bbN_0$.  The
isomorphism type of $V$ is determined uniquely by the multiplicities. In particular, the
isomorphism classes of smooth admissible $k$-representations of $G$ form a set $\adm_k(G)$.
In fact, $\adm_k(G)$ is a monoid using the direct sum of representations.
The class of a representation $V$ in $\adm_k(G)$ will be denoted by $[V]$.

\begin{definition}
  Let $G$ be a pro-$p$ group.
  The \emph{K-group of smooth admissible $k$-representations} $\admK(\cga{k}{G})$
  is the free profinite $\bbZ_p$-module $\calM(\Irr_k(G),\bbZ_p)$ on $\Irr_k(G)$.
\end{definition}

The multiplicity map $m\colon \adm_k(G) \to \admK(\cga{k}{G})$ sends $[V]$ to
$m([V])\in \admK(\cga{k}{G})$, where  $m([V])$ as a function on $\Irr_k(G)$
is $[S] \mapsto m(V,S)$. Since
$m(V \oplus W, S) = m(V,S) + m(W,S)$ it is an injective monoid homomorphism. We use this
injective map to actually identify $\adm_k(G)$ with its image in the admissible K-group. 

\begin{lemma}\label{lem:finite-dim-dense}
  The set of classes of smooth finite dimensional representations of $G$
  is dense in $\admK(\cga{k}{G})$.
\end{lemma}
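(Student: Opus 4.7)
The plan is to exploit the explicit description of the admissible K-group as the free profinite $\bbZ_p$-module $\calM(\Irr_k(G),\bbZ_p)$ together with its standard neighborhood base, and observe that smooth finite dimensional representations correspond precisely to the finitely supported $\bbN_0$-valued functions on $\Irr_k(G)$.

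More concretely, fix an element $\eta \in \admK(\cga{k}{G})$ and a basic open neighborhood $\eta + \calU(F,\epsilon)$, where $F \subseteq \Irr_k(G)$ is finite and $\epsilon > 0$. Since $\bbN_0$ is dense in $\bbZ_p$ (the truncations of the $p$-adic expansion of any $p$-adic integer are non-negative integers converging to it), for each $[S] \in F$ I can select $n_S \in \bbN_0$ with $|\eta([S]) - n_S|_p < \epsilon$. Form the representation
\[
V = \bigoplus_{[S] \in F} n_S\, S.
\]
Each $S$ appearing here is finite dimensional (as noted earlier, every smooth irreducible $k$-representation of the profinite group $G$ is finite dimensional and factors through a finite continuous quotient), and a finite direct sum of smooth representations is smooth, so $V$ is a smooth finite dimensional representation of $G$.

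By construction the multiplicity function $m([V])$ is supported on $F$ with $m([V])([S]) = n_S$ for $[S] \in F$, so $m([V]) - \eta$ lies in $\calU(F,\epsilon)$. Since such basic neighborhoods form a fundamental system at $0$ in $\calM(\Irr_k(G),\bbZ_p)$, this proves density. There is no real obstacle; the argument is essentially the remark that $\bbN_0$-valued functions of finite support are dense in the free profinite $\bbZ_p$-module, combined with the observation that these are exactly the multiplicity vectors of smooth finite dimensional representations.
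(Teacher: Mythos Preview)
Your proof is correct and follows essentially the same approach as the paper: both pick a basic neighborhood $\eta + \calU(F,\epsilon)$, use the density of $\bbN_0$ in $\bbZ_p$ to choose $n_S \in \bbN_0$ approximating $\eta([S])$ for each $[S] \in F$, and check that the class of $V = \bigoplus_{[S]\in F} n_S\,S$ lies in this neighborhood. Your version is slightly more explicit in justifying why $V$ is smooth and finite dimensional, but the argument is the same.
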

\begin{proof}
  Let $\eta \in \admK(\cga{k}{G})$ and $F \subseteq \Irr_k(G)$ be a finite set.
  We consider the open neighbourhood $\eta + \calU(F,\epsilon)$ as
  in \S\ref{sec:free-profinite-module}.  For every $[S] \in F$ we find, using that $\bbN_0$ is
  dense in $\bbZ_p$, a number $n_S \in \bbN_0$ such that
  $\left|n_S-\eta([S])\right|_p < \epsilon$.  Hence, the class of the finite dimensional
  smooth $k$-representation
  $$V = \bigoplus_{[S] \in F} n_S S $$ is contained in $\eta + \calU(F,\epsilon)$.
\end{proof}

\begin{theorem}\label{thm:dimension-function}
  Let $p$ be a prime and $G$ a finitely generated pro-$p$ group.
  Recall that $k$ denotes either $\bbQ$ or a finite field with $\chr(k) \neq p$.
  There is a unique continuous $\bbZ_p$-linear map
  $$\pdim^{G}_k\colon \admK(\cga{k}{G}) \to \bbZ_p$$
  such that $\pdim_k^{G} V = \dim_kV$,
  whenever $V$ is finite dimensional over $k$.
\end{theorem}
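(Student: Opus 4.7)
The plan is to invoke the universal property of the free profinite $\bbZ_p$-module $\admK(\cga{k}{G}) = \calM(\Irr_k(G),\bbZ_p)$ established in Section~\ref{sec:free-profinite-module}. Uniqueness is immediate from Lemma~\ref{lem:finite-dim-dense}: the finite dimensional representations form a dense subset of $\admK(\cga{k}{G})$, so any continuous $\bbZ_p$-linear map into $\bbZ_p$ is determined by its values on them.

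For existence I would define $f\colon \Irr_k(G)\to \bbZ_p$ by $f([S]) = \dim_k S$ and appeal to the universal property, which produces a unique continuous $\bbZ_p$-linear extension $\pdim^G_k := \widetilde{f}$ provided that $f$ is $0$-convergent in the sense of Section~\ref{sec:free-profinite-module}. The compatibility $\widetilde{f}([V]) = \dim_k V$ for finite dimensional $V \cong \bigoplus m(V,S)\,S$ then drops out from $V$ corresponding to a finitely supported element $\sum m(V,S)\,\delta_{[S]}$. Thus the whole theorem reduces to the finiteness assertion
\[ \#\{[S]\in\Irr_k(G) : p^n \nmid \dim_k S \} < \infty \quad \text{for each } n\geq 1. \]

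To prove this I would first observe that every smooth irreducible $k$-representation $S$ factors through a finite $p$-group quotient $H_S = G/\ker\rho_S$, and decompose
\[ S\otimes_k \alg{k} \;\cong\; m\bigoplus_{\sigma\in\Omega} T^{\sigma} \]
into Galois-conjugate absolutely irreducibles, so that $\dim_k S = m\,|\Omega|\,p^{a}$, where $p^a = \dim_{\alg{k}} T$ is a $p$-power since $H_S$ is a $p$-group. Because character values of a $p$-group lie in the cyclotomic field $k(\zeta_{p^e})$ with $p^e = \exp(H_S)$, the coprime-to-$p$ part of $|\Omega|=[k(\chi_T):k]$ divides $p-1$, and the Schur index $m$ is uniformly bounded (it equals $1$ for $k$ finite by Wedderburn, and is at most $2$ for $k=\bbQ$ by Roquette--Yamada). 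Hence $v_p(\dim_k S) < n$ forces both $a<n$ and $v_p(|\Omega|)<n$, which together bound $\dim_k S$ by a constant $D=D(n,p)$.

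The faithful representation $S$ embeds $H_S$ into $\GL_{\dim_k S}(k) \subseteq \GL_D(k)$, so Minkowski's classical bound on finite $p$-subgroups of $\GL_D(\bbQ)$ (for $k=\bbQ$) or the sheer finiteness of $\GL_D(k)$ (for $k$ a finite field) yields $|H_S| \leq C(n,k)$. Since $G$ is a finitely generated pro-$p$ group, only finitely many open normal subgroups have index $\leq C(n,k)$, and each of the resulting finite quotients admits only finitely many irreducible $k$-representations. This establishes the desired finiteness. The main obstacle is the Galois-theoretic step that couples $v_p(\dim_k S)$ to the size of $H_S$; once that is in place, the universal property and the standard finiteness of open subgroups of given index do the rest.
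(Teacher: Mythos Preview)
Your proof is correct and follows essentially the same route as the paper: both reduce existence to the $0$-convergence of $[S]\mapsto\dim_k S$ via the universal property of $\calM(\Irr_k(G),\bbZ_p)$, and both establish $0$-convergence by combining the Galois descent decomposition of $S\otimes_k\alg{k}$, the bound on the Schur index (Roquette for $k=\bbQ$, triviality for finite $k$), and the Jordan--Minkowski finiteness together with the fact that a finitely generated profinite group has only finitely many quotients of a given order. The only cosmetic difference is that the paper packages the representation-theoretic input as the divisibility statement $\dim_k S \mid (p-1)p^j$ (its Lemma~\ref{lem:keylem}, Step~3) before invoking finiteness, whereas you bound $\dim_k S$ directly from $\nu_p(\dim_k S)<n$; the ingredients and logic are the same.
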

\begin{proof}
  We use that $\admK(\cga{k}{G})$
  is the free profinite $\bbZ_p$-module over $\Irr_k(G)$.
  Lemma \ref{lem:keylem} below shows that the map 
  $\dim_k\colon [S] \mapsto \dim_k(S)$ is  $0$-convergent.
  Therefore we can
  deduce the existence of a continuous $\bbZ_p$-linear map $\pdim_k^G$
  which satisfies $\pdim_k^G(S) = \dim_kS$ for all $[S] \in \Irr_k(G)$ from \S\ref{sec:free-profinite-module}.
  Since every finite dimensional smooth representation is the direct sum of a finite number
  of irreducible ones, we conclude that $\pdim_k^G$ has the desired property.
\end{proof}

\begin{lemma}\label{lem:keylem}
  Let $G$ be a finitely generated  pro-$p$ group.
  For every $\nu \in \bbN$ all but finitely many
  $[S]\in \Irr_k(G)$ satisfy
  $p^\nu | \dim_k(S)$. 
\end{lemma}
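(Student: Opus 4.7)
The plan is to show that for every $\nu \in \bbN$ the set $A_\nu := \{[S] \in \Irr_k(G) : \nu_p(\dim_k S) < \nu\}$ is finite.

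I first reduce to absolutely irreducible representations. Let $\bar k$ be an algebraic closure of $k$. Each $[S] \in \Irr_k(G)$ corresponds, via Galois descent, to a $\Gal(\bar k/k)$-orbit of absolutely irreducible smooth $\bar k$-representations $\chi$ of $G$; each $\chi$ factors through a finite $p$-group quotient $H = G/\ker\chi$. The classical theorem that irreducible representations of $p$-groups in characteristic coprime to $p$ have $p$-power dimension gives $\dim_{\bar k}\chi = p^s$ for some $s \geq 0$. The character values of $\chi$ are sums of $p$-power roots of unity, so $k(\chi) \subseteq k(\zeta_{p^m})$ for some $m$. Combined with the bound on the Schur index (always $1$ for $k = \bbF_\ell$ since the Brauer group of a finite field is trivial, and at most $2$ for $k = \bbQ$ by Roquette's theorem), this yields $\dim_k S = p^{s+t}\cdot c$, where $t = \nu_p([k(\chi):k])$ and $c$ divides a constant $C_{k,p}$ depending only on $k$ and $p$. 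Hence $[S] \in A_\nu$ forces both $s$ and $t$ to be at most $\nu$.

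Next, I bound the order of $H$. The central character $\omega_\chi : Z(H) \to \bar k^\times$ satisfies $\chi(z) = p^s\omega_\chi(z) \in k(\chi)$ for every $z \in Z(H)$, so $\omega_\chi(Z(H)) \subseteq \mu_{p^\infty} \cap k(\zeta_{p^m}) = \mu_{p^m}$. Faithfulness of $\chi$ on $H$ then forces $|Z(H)| \leq p^m \leq p^\nu$. The quotient $H/Z(H)$ carries a faithful projective representation of degree $p^s$, and an analysis via the commutator form of the central extension $1 \to Z(H) \to H \to H/Z(H) \to 1$ bounds $|H/Z(H)|$ in terms of $s$ and $|Z(H)|$: directly for $H$ of nilpotency class at most $2$, and by iteration up the upper central series in general. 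This yields a uniform bound $|H| \leq M(\nu)$.

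Finally, I count. Since $G$ is topologically finitely generated, a standard result ensures that $G$ has only finitely many open normal subgroups of any given finite index. In particular only finitely many quotients $H = G/N$ satisfy $|H| \leq M(\nu)$; each such $H$ has only finitely many absolutely irreducible $\bar k$-representations, and each Galois orbit yields a single $[S]$. Therefore $A_\nu$ is finite. The main obstacle lies in the second step's bound on $|H|$: clean symplectic arguments apply when $H$ has class at most $2$, but the general case requires careful iteration up the upper central series. An alternative route uses Shoda's theorem (every absolutely irreducible representation of a $p$-group is monomial, $\chi = \Ind_K^H\lambda$): choosing a ``minimal'' induction datum $(K,\lambda)$, one translates the bound on $[k(\chi):k]$ into a bound on $\mathrm{ord}(\lambda)$, and hence into a bound on $|H|$.
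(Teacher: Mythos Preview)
Your first step, establishing that $\dim_k S = p^{s+t}\cdot c$ with $c$ dividing a constant depending only on $(k,p)$, is essentially the paper's Step~3 (phrased there as $\dim_k S \mid (p-1)p^j$ for some~$j$) and is correct.

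The gap is in your second step. You claim that the commutator form on $H/Z(H)$, together with iteration along the upper central series, bounds $|H/Z(H)|$ in terms of $s$ and $|Z(H)|$. This is false already for $p=2$: the dihedral group $D_{2^n}$ of order $2^n$ (any $n\geq 3$) has a faithful irreducible $\bar k$-representation of degree~$2$, so $s=1$, and has centre of order~$2$, yet $|H/Z(H)|=2^{n-1}$ is unbounded. What actually controls $|H|$ in this family is $t$: one computes $[\bbQ(\chi):\bbQ]=2^{n-2}$, so $t$ grows with~$n$. But in your argument the bound on $t$ is fed only into the bound on $|Z(H)|$ (where it is wasted, since $|Z(H)|=2$ throughout this family), not into the bound on $|H/Z(H)|$. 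The monomial alternative you sketch may be salvageable, but as written it is not a proof: you would need to explain precisely how a bound on $[k(\chi):k]$ controls the order of the inducing character and, from there, $|K|$ and hence $|H|$.

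The paper avoids bounding $|H|$ altogether. Its Step~1 shows directly that for every fixed $n$ there are only finitely many isomorphism classes of $n$-dimensional smooth irreducible $k$-representations of~$G$: for finite $k$ this is immediate since $G$ is topologically finitely generated and $\GL_n(k)$ is finite; for $k=\bbQ$ it invokes the Jordan--Minkowski theorem that $\GL_n(\bbQ)$ contains only finitely many conjugacy classes of finite subgroups. Combined with the divisibility $\dim_k S\mid (p-1)p^j$, the conclusion is immediate: only finitely many $[S]$ satisfy $\dim_k S \mid (p-1)p^{\nu-1}$, and every other $[S]$ has $p^\nu \mid \dim_k S$. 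If you still want a bound on $|H|$, note that once $\dim_k S$ is bounded, Minkowski's theorem (respectively the finiteness of $\GL_n(\bbF_\ell)$) bounds the order of any finite subgroup of $\GL_{\dim_k S}(k)$ and hence bounds $|H|$; but that is just the paper's Step~1 in disguise.
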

\begin{proof}
  \emph{Step 1}: There are only finitely many conjugacy classes of continuous
  homomorphisms $\psi\colon G \to \GL_n(k)$.

  Suppose that $k$ is a finite field. By assumption $G$ is finitely generated and since
  $\GL_n(k)$ is a finite group, there are at most $\left|\GL_n(k)\right|^{d(G)}$ many homomorphisms $\psi$ as above.

  Now assume that $k = \bbQ$. By a well-known Theorem of Jordan (1880) and Minkowski (1887)
  there are only finitely many conjugacy classes of finite subgroups in $\GL_n(\bbQ)$.  Choose
  representatives $F_1,\dots,F_j$ for the conjugacy classes. As above, for each $i$ there are
  only finitely many continuous homomorphisms from $G$ to $F_i$.

  \medskip

  \emph{Step 2:} Every irreducible smooth
  $\alg{k}$-representation of $G$, where $\alg{k}$ denotes the algebraic closure of $k$,
  has dimension $p^j$ for some $j \in \bbN_0$.

  Since every smooth representation factors over a continuous finite quotient, we may assume
  that $G$ is a finite $p$-group.  Suppose that $k$ is a finite field. Since
  $\chr(k) = \ell \neq p$, the dimensions of irreducible representations of $G$ over $\alg{k}$
  coincide with dimensions of irreducible representations of $G$ over $\alg{\bbQ}$. Indeed,
  this follows since Brauer characters and ordinary characters coincide in this case; see Corollary (18.11)
  in \cite{CurtisReinerI}. Hence it suffices to consider the case $k = \bbQ$.  In this case
  $\alg{k}$ is an algebraically closed field of characteristic zero and the assertion follows
  from Theorem (3.11) in \cite{Isaacs}.

  \medskip

  \emph{Step 3:}
  The dimension of a smooth irreducible $k$-representation of $G$ divides $(p-1)p^j$ for some $j\in\bbN$.

  As before we my assume that $G$ is a finite $p$-group.
  Let $S$ be an irreducible $k$-representation of $G$.
  The $\alg{k}$-representation $\alg{k} \otimes_k S$
  decomposes into $\alg{k}$-irreducible representations
  $$ \alg{k} \otimes_k S = \bigoplus_{i=1}^f m S_i$$
  where each occurs with the same multiplicity $m$ and, moreover, the $S_i$ form one orbit
  under the action of the Galois group $\Gal(\alg{k}/k)$; see (9.21) in \cite{Isaacs}.  Let
  $k(p^\infty)$ be the subfield of $k$ obtained by adjoining all $p$-power roots of unity to
  $k$.  Since $k(p^\infty)$ is a splitting field for $G$, the representations $S_i$ are
  $\Gal(\alg{k}/k(p^\infty))$ invariant. We deduce that
  $f$ is the degree of a finite field extension $k'/k$ with $k' \subseteq k(p^\infty)$
  and thus divides $[k(\zeta):k]$ where $\zeta$ is a $p^{a}$-th root of unity for some $a \in \bbN$.

  Assume that $k = \bbQ$. By a theorem of Roquette $m = 1$ or $m=2$ where the latter can only
  occur if $p = 2$; see (10.14) in \cite{Isaacs}.
  Moreover, $[\bbQ(\zeta):\bbQ] = \phi(p^a) = (p-1)p^{a-1}$ and we deduce with Step 2 that
  $\dim_k(S) = fm\dim_{\alg{k}}(S_1)$ divides $(p-1)p^j$ for some $j$.
  Finally, assume that $k$ is a finite field. In this case $m = 1$ \cite[(9.21)]{Isaacs}
  and $[k(\zeta):k]$ divides $\phi(p^{a})$, thus  we may conclude as above.

  \medskip
  
  Now we can complete the proof. Let $\nu\in \bbN$ be given.  We deduce from Step 1 that for
  each $n \in \bbN$ the number $r_k(n)$ of isomorphism classes of $n$-dimensional smooth
  irreducible $k$-representations of $G$ is finite.  In particular, there is only a finite set
  $X \subseteq \Irr_k(G)$ of representations whose dimension divides $(p-1)p^{\nu-1}$.  Let
  $[S] \in \Irr_k(G) \setminus X$. By Step 3 $\dim_k(S) | (p-1)p^j$ for some $j\in
  \bbN$. However, since $\dim_k(S) \nmid (p-1)p^{\nu-1}$ we conclude that
  $p^\nu \mid \dim_k(S)$.
\end{proof}

\subsection{Restriction to open subgroups}\label{par:res-dim}
Let $H \leq_o G$ be an open subgroup of a finitely generated pro-$p$ group $G$.  The
restriction $V_{|H}$ to $H$ of a smooth admissible $k$-representation $V$ of $G$ is a smooth
admissible representation of $H$. The next result extends this operation
to a continuous map between the admissible K-groups.

\begin{proposition*}
  Let $G$ be a finitely generated pro-$p$ group and let $H \leq_o G$.
  There is a unique continuous
  $\bbZ_p$-linear map $$\Res_H^G \colon \admK(\cga{k}{G}) \to \admK(\cga{k}{H})$$
  such that $\Res_H^G([V]) = [V_{|H}]$ for every
  $[V] \in \adm_k(G)$.
  
  Restriction preserves the $p$-adic dimension, i.e.
  $$\pdim_k^H(\Res_H^G(\eta)) = \pdim_k^G(\eta)$$
  for all $\eta \in \admK(\cga{k}{G})$.
\end{proposition*}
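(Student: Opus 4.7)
The plan is to build $\Res_H^G$ by invoking the universal property of the free profinite $\bbZ_p$-module $\admK(\cga{k}{G}) = \calM(\Irr_k(G),\bbZ_p)$ established in \S\ref{sec:free-profinite-module}. First I note that an open subgroup of a finitely generated pro-$p$ group is itself a finitely generated pro-$p$ group, so Theorem~\ref{thm:dimension-function} and Lemma~\ref{lem:keylem} apply to $H$ as well. For each $[S] \in \Irr_k(G)$ the restriction $S_{|H}$ is a finite-dimensional smooth admissible $k$-representation of $H$, giving a well-defined assignment $r\colon \Irr_k(G) \to \admK(\cga{k}{H})$, $[S] \mapsto [S_{|H}]$. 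Once I verify that $r$ is $0$-convergent, the universal property produces the desired unique continuous $\bbZ_p$-linear extension $\Res_H^G$.

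Establishing $0$-convergence of $r$ is the decisive step and, I expect, the main technical obstacle. A basic open neighborhood of $0$ in $\admK(\cga{k}{H})$ has the form $\calU(F,\epsilon)$ with $F \subseteq \Irr_k(H)$ finite, so by intersecting over $F$ it suffices to show that for each fixed $[T] \in \Irr_k(H)$ the multiplicity $m(S_{|H},T)$ vanishes for all but finitely many $[S] \in \Irr_k(G)$. Here I would apply Frobenius reciprocity in the smooth setting: because $H$ has finite index in $G$, the induced representation $\Ind_H^G(T)$ is finite-dimensional of dimension $[G{:}H]\dim_k(T)$, and $T$ occurs in $S_{|H}$ precisely when $S$ occurs as an irreducible constituent of $\Ind_H^G(T)$. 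Since a finite-dimensional smooth $k$-representation of $G$ has only finitely many isomorphism classes of irreducible constituents (here semisimplicity is available via Maschke's theorem, as $S$ factors through a finite $p$-group and $\chr(k)\neq p$), only finitely many $[S]$ contribute. Hence $r$ is $0$-convergent, and we obtain $\Res_H^G$.

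To extend the formula $\Res_H^G([V]) = [V_{|H}]$ from irreducibles to arbitrary $[V] \in \adm_k(G)$, I would write $[V]$ as the convergent sum $\sum_{[S]} m(V,S)[S]$ in $\calM(\Irr_k(G),\bbZ_p)$, use continuity of $\Res_H^G$ to obtain $\Res_H^G([V]) = \sum_{[S]} m(V,S)[S_{|H}]$, and then evaluate at any $[T]\in \Irr_k(H)$; the finiteness supplied by the Frobenius argument above guarantees that this sum equals $m(V_{|H},T)$ at every $[T]$. Uniqueness is automatic: any continuous $\bbZ_p$-linear map is determined by its values on the generators $[S]$, $[S] \in \Irr_k(G)$. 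Finally, for the dimension statement, both $\pdim_k^G$ and $\pdim_k^H \circ \Res_H^G$ are continuous $\bbZ_p$-linear functionals on $\admK(\cga{k}{G})$ which agree on every finite-dimensional class (since $\dim_k(V_{|H}) = \dim_k(V)$), and density of finite-dimensional classes (Lemma~\ref{lem:finite-dim-dense}) forces them to coincide on all of $\admK(\cga{k}{G})$.
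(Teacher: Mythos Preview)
Your proposal is correct and follows essentially the same route as the paper: construct $\Res_H^G$ via the universal property of the free profinite $\bbZ_p$-module by checking that $[S]\mapsto[S_{|H}]$ is $0$-convergent, and verify $0$-convergence via Frobenius reciprocity together with the finite dimensionality of $\Ind_H^G(T)$. Your justification of the dimension identity via Lemma~\ref{lem:finite-dim-dense} is equivalent to the paper's appeal to the uniqueness clause of Theorem~\ref{thm:dimension-function}, and you additionally spell out the passage from irreducibles to arbitrary $[V]\in\adm_k(G)$, which the paper leaves implicit.
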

\begin{proof}
  Assume for a moment that $\Res_H^G$ exists, then
  the map $\pdim_k^H \circ \Res_H^G$ is a continuous $\bbZ_p$-linear map
  from $\admK(\cga{k}{G})$ to $\bbZ_p$ and satisfies
  $$ \pdim_k^H(\Res_H^G([V])) = \pdim_k^H([V_{|H}]) = \dim_k V_{|H} = \dim_k V$$
  for every finite dimensional smooth $k$-representation $V$ of $G$.
  The uniqueness statement in Theorem \ref{thm:dimension-function}
  yields $\pdim_k^H \circ \Res_H^G = \pdim_k^G$.
  
  We will construct $\Res_H^G$ using the universal property explained in \S\ref{sec:free-profinite-module}.
  This amounts to showing that the map $\Irr_k(G) \to \admK(\cga{k}{H})$
  which sends $[S]$ to $[S_{|H}]$ is $0$-convergent.
  Let $F \subseteq \Irr_k(H)$ be a finite set and let $\epsilon > 0$.
  Then $[S_{|H}]$ does not lie in the open submodule $\calU(F,\epsilon) \subseteq \admK(\cga{k}{G})$
  only if $m(S_{|H},T) \neq 0$ for some $[T] \in F$.
  
  It remains to show that for every
  $[T] \in \Irr_k(H)$ there are only finitely many $[S] \in \Irr_k(G)$
  such that $m(S_{|H},T) \neq 0$.
  Indeed,  $m(S_{|H},T) \neq 0$ exactly if there is a non-zero $H$-equivariant homomorphism
  from $T$ to $S_{|H}$. By Frobenius reciprocity the later holds if and only if there
  is a non-zero homomorphism of $G$-representations
  from the induced representation $\Ind_H^G(T)$ to $S$; see (10.8) in \cite{CurtisReinerI}.
  However, a non-trivial homomorphism from $\Ind_H^G(T)$ to the irreducible representation $S$ exists precisely
  when $S$ occurs as a direct summand of $\Ind_H^G(T)$.
  Since the representation $\Ind_H^G(T)$ is finite dimensional, its dimension is $|G:H|\dim_k(T)$,
  there are at most finitely many such $[S] \in \Irr_k(G)$.
\end{proof}

\begin{definition}\label{def:p-adic-dim}
  Let $G$ be a finitely generated virtually pro-$p$ group and let $V$ be a smooth admissible $k$-representation
  of $G$.
  We define the \emph{$p$-adic dimension} of $V$ as
  \begin{equation}\label{eq:dim-restriction}
    \pdim_k^G V = \pdim_k^H ([V_{|H}])
  \end{equation}
  
  where $H \leq_o G$ is some open pro-$p$ subgroup.  The previous proposition shows that
  this definition is independent of the choice of the subgroup $H$.
  In addition, with this definition any open subgroup $H \leq_0 G$
  satisfies the identity \eqref{eq:dim-restriction}.
\end{definition}

\subsection{Inflation}\label{sec:inflation}
Let $\pi \colon G \to H$ be a surjective homomorphism of finitely generated pro-$p$ groups.
Then every smooth admissble $k$-representation $V$ of $H$ becomes a smooth admissible
$k$-representation of $G$ via $\pi$. This new representation $\infl_H^G(V)$ is called the \emph{inflation} of
$V$ via $\pi$.
\begin{proposition*}
  There is a unique continuous $\bbZ_p$-linear map
  $\infl_H^G \colon \admK(\cga{k}{H}) \to \admK(\cga{k}{G})$
  such that $\infl_H^G[V] = [\infl_H^G(V)]$ for every
  smooth admissible $k$-representation of $H$.
  Inflation preserves the p-adic dimension, i.e.
  \begin{equation}\label{eq:inflation-preserves-pdim}
    \pdim_k^G(\infl_H^G(\eta)) = \pdim_k^H(\eta).
  \end{equation}
\end{proposition*}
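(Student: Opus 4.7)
The proof parallels the restriction proposition (\S\ref{par:res-dim}), and I expect it to be somewhat easier because inflation along the surjection $\pi\colon G\twoheadrightarrow H$ identifies $\adm_k(H)$ with the subcategory of $\adm_k(G)$ on which $\ker(\pi)$ acts trivially.

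The plan is as follows. First, for uniqueness: by Lemma \ref{lem:finite-dim-dense} the finite dimensional classes are dense in $\admK(\cga{k}{H})$, so any continuous $\bbZ_p$-linear map is determined by its values there, and the formula $\infl_H^G[V] = [\infl_H^G(V)]$ pins those values down. Second, for existence I would invoke the universal property of the free profinite $\bbZ_p$-module from \S\ref{sec:free-profinite-module}: it suffices to show that the map $\Irr_k(H) \to \admK(\cga{k}{G})$ sending $[S]$ to $[\infl_H^G(S)]$ is $0$-convergent. Since $\pi$ is surjective, $\infl_H^G(S)$ is again irreducible and two irreducible representations of $H$ are isomorphic iff their inflations are (they have the same underlying module and both actions factor over $\pi$). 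Consequently the map $[S] \mapsto [\infl_H^G(S)]$ is injective from $\Irr_k(H)$ into $\Irr_k(G) \subseteq \admK(\cga{k}{G})$. Given any basic open neighbourhood $\calU(F,\epsilon)$ of $0$ determined by a finite set $F\subseteq \Irr_k(G)$, only the (finitely many) classes $[S]$ with $[\infl_H^G(S)] \in F$ can fall outside $\calU(F,\epsilon)$, because on $\Irr_k(G)\setminus F$ the indicator function representing $[\infl_H^G(S)]$ vanishes on $F$.

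Third, for the preservation of $p$-adic dimension, I would observe that both $\pdim_k^G\circ \infl_H^G$ and $\pdim_k^H$ are continuous $\bbZ_p$-linear maps $\admK(\cga{k}{H}) \to \bbZ_p$. On a finite dimensional smooth $k$-representation $V$ of $H$ the underlying $k$-vector space of $\infl_H^G(V)$ coincides with $V$, so
\[
\pdim_k^G(\infl_H^G([V])) = \dim_k \infl_H^G(V) = \dim_k V = \pdim_k^H([V]).
\]
The uniqueness statement in Theorem \ref{thm:dimension-function} then forces the two maps to coincide on all of $\admK(\cga{k}{H})$, yielding \eqref{eq:inflation-preserves-pdim}.

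The only genuinely non-formal step is the $0$-convergence of $[S] \mapsto [\infl_H^G(S)]$, and even there the obstacle is mild: it reduces to the surjectivity of $\pi$ guaranteeing that inflation is an injection on isomorphism classes of smooth irreducibles, after which every $[T]\in\Irr_k(G)$ has at most one preimage in $\Irr_k(H)$. No analogue of the Frobenius reciprocity / induced dimension bound from the restriction case is required.
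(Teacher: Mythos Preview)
Your proposal is correct and follows essentially the same route as the paper: both arguments rest on the observation that inflation along the surjection $\pi$ sends irreducibles to irreducibles injectively, from which $0$-convergence of $[S]\mapsto[\infl_H^G(S)]$ is immediate, and the preservation of $p$-adic dimension is deduced from the uniqueness clause of Theorem~\ref{thm:dimension-function}. The paper is slightly terser (it writes down the resulting map on $\Irr_k(G)$ explicitly rather than invoking the universal property, and bundles uniqueness and \eqref{eq:inflation-preserves-pdim} into a single ``follows once existence is established''), but the content is the same.
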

\begin{proof}
  Again uniqueness and \eqref{eq:inflation-preserves-pdim} follow immediately once the
  existence is established.  Let $[S] \in \Irr_k(H)$, then the inflation $\infl_H^G(S)$ is
  irreducible.  This yields an injective map from $\Irr_k(H)$ into $\Irr_k(G)$.  In addition,
  a class $[T] \in \Irr_k(G)$ can be represented by an inflated representation if and only if
  $T$ factors over the kernel of $\pi$.  Hence, for $\eta \in \admK(\cga{k}{H})$ we define
  $$\infl_H^G(\eta)([T]) = \eta([S])$$
  if $T \cong \infl_H^G(S)$ for some $S$ and we impose $\infl_H^G(\eta)([T]) = 0$ otherwise.
  It is easily verified that this defines a $0$-convergent map a $\Irr_k(H) \to \admK(\cga{k}{G})$.
\end{proof}

\begin{lemma}[Approximation Lemma]\label{lem:approximation-dim}
    Let $G$ be a finitely generated virtually pro-$p$ group and let $V$ be a smooth admissible
    $k$-representation.
    Every exhaustive chain $(N_n)_{n\in \bbN}$  in $G$ satisfies
    \[ \lim_{n \to \infty} \pdim_k^{G/N_n} V^{N_n} = \pdim_k^{G} V. \]

    In particular, if each $N_n$ is open in $G$, then
    \[\lim_{n \to \infty} \dim_k V^{N_n} = \pdim_k^G V \in \bbZ_p.\]
  \end{lemma}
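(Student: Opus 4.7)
The plan is to show that $[V^{N_n}]$ converges to $[V]$ in the admissible K-group of a pro-$p$ group and then invoke continuity of $\pdim_k$. First I would reduce to the case where $G$ itself is pro-$p$. Choose an open pro-$p$ subgroup $H \leq_o G$ and let $N$ be its normal core in $G$, which is an open normal subgroup contained in $H$. A standard compactness argument applied to the decreasing family of closed sets $N_n \cap (G \setminus U)$, for an open neighbourhood $U \ni 1$, shows that every open neighbourhood of $1$ contains $N_n$ for all $n$ sufficiently large; in particular there exists $n_0$ with $N_n \subseteq H$ for $n \geq n_0$. For such $n$ the quotient $H/N_n$ is an open pro-$p$ subgroup of $G/N_n$, so Definition~\ref{def:p-adic-dim} yields
\[\pdim_k^{G/N_n} V^{N_n} = \pdim_k^{H/N_n}(V_{|H})^{N_n} \qquad \text{and} \qquad \pdim_k^G V = \pdim_k^H V_{|H}.\]
Replacing $(G, V)$ by $(H, V_{|H})$, I may thus assume that $G$ is a finitely generated pro-$p$ group.

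In this pro-$p$ setting I would use the canonical decomposition $V \cong \bigoplus_{[S] \in \Irr_k(G)} m(V,S)\, S$. Because $N_n$ is normal, $S^{N_n}$ is a $G$-subrepresentation of the irreducible $S$, and therefore equals $S$ when $N_n \subseteq \ker(S)$ and $0$ otherwise. Consequently, under the identification $\admK(\cga{k}{G}) \cong \calM(\Irr_k(G), \bbZ_p)$, the class $[V^{N_n}]$ is the function sending $[S]$ to $m(V,S)$ if $N_n \subseteq \ker(S)$ and to $0$ otherwise. For each fixed $[S]$ the kernel $\ker(S)$ is open in $G$ (since $S$ is smooth and finite dimensional), so the compactness argument above forces $N_n \subseteq \ker(S)$ for all $n$ sufficiently large, and hence $[V^{N_n}] \to [V]$ pointwise on $\Irr_k(G)$, i.e.\ in $\admK(\cga{k}{G})$. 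Continuity of $\pdim_k^G$ (Theorem~\ref{thm:dimension-function}) gives $\pdim_k^G V^{N_n} \to \pdim_k^G V$, and the inflation identity \eqref{eq:inflation-preserves-pdim} applied to $G \to G/N_n$ identifies $\pdim_k^G V^{N_n}$ with $\pdim_k^{G/N_n} V^{N_n}$, which is the first limit. For the second statement, when each $N_n$ is open, the finite quotient $G/N_n$ is a finite $p$-group and admissibility forces $V^{N_n}$ to be a finite dimensional $k$-vector space; hence $\pdim_k^{G/N_n} V^{N_n} = \dim_k V^{N_n}$ by the defining property of $\pdim_k$ in Theorem~\ref{thm:dimension-function}.

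I expect the main obstacle to be the reduction to the pro-$p$ case: the chain $(N_n)$ lives in $G$ rather than inside a chosen open pro-$p$ subgroup, and one must arrange for the invariants $V^{N_n}$ to be analyzed purely inside a pro-$p$ group. Passing to the normal core of $H$, together with the compactness fact that $(N_n)$ eventually enters every open neighbourhood of $1$, is what makes this reduction clean. Once that is in place the rest is a direct convergence-and-continuity argument, because in a pro-$p$ group every smooth irreducible representation has open kernel, so each irreducible summand of $V$ contributes to $V^{N_n}$ as soon as $N_n$ sinks below that kernel.
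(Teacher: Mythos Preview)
Your proposal is correct and follows essentially the same route as the paper: a compactness argument forces $N_n$ into a chosen open normal pro-$p$ subgroup for large $n$, after which inflation (identity \eqref{eq:inflation-preserves-pdim}) and the continuity of $\pdim_k^G$ reduce the claim to the convergence $[\infl_{G/N_n}^G V^{N_n}]\to [V]$ in $\admK(\cga{k}{G})$, checked irreducible by irreducible. Your detour through the normal core of $H$ is harmless but unnecessary (the paper simply takes $H$ open normal pro-$p$ to begin with), and your explicit dichotomy $S^{N_n}\in\{S,0\}$ is just an unpacking of the paper's observation that $m(V,S)=m(\infl_{G/N_n}^G V^{N_n},S)$ once $N_n$ lies in an open normal subgroup acting trivially on $S$.
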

  \begin{proof}
    Let $H\normal_o G$ be an open normal subgroup.
    We claim that there is some $e \in \bbN$ such that $N_n \subseteq H$ for all $n \geq e$.
    Indeed, the intersection $\bigcap_{n \in \bbN} (N_n\setminus H) = \emptyset$
    and, since the sets $N_n \setminus H$ are compact and nested, this implies that $N_e \setminus H$ is empty
    for some $e \in \bbN$. Applying this to an open normal pro-$p$ subgroup
    reduces the proof to the case where $G$ is a pro-$p$ group.

    Due to \ref{sec:inflation} we have the equality
    $\pdim_k^G \infl_{G/N_n}^G(V^{N_n}) = \pdim_k^{G/N_n}V^{N_n}$.
    It remains to show that the sequence
    $([\infl_{G/N_n}^G(V^{N_n})])_{n \in \bbN}$ converges to $[V]$ in $\admK(\cga{k}{G})$, then
    the continuity of the $p$-adic dimension function completes the proof.
    Let $A \subseteq \Irr_k(G)$ be a finite subset,
    then there is an open normal subgroup $H \normal_o G$
    such that $H$ acts trivially on $S$ for all $[S] \in A$.
    We have seen that $N_n \subseteq H$ for all sufficiently large $n \in \bbN$, and thus
    $m(V,S) = m(\infl_{G/N_n}^G(V^{N_n}), S)$ for all $[S] \in A$.
    This finishes the proof.
  \end{proof}

  \begin{proposition}\label{prop:induction-dim}
    Let $H \leq_c G$ be a finitely generated closed subgroup of a finitely generated
    virtually pro-$p$ group $G$.
  The p-adic dimension satisfies
  \begin{equation*}
    \pdim_k^G(\Ind_H^G(W)) = \left\Vert G:H \right\Vert \pdim_k^H(W).
  \end{equation*}
  for every smooth admissible $k$-representation $W$ of $H$. Here we use the notation of
  \S\ref{par:index-convention}.
\end{proposition}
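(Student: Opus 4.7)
The plan is to apply the Approximation Lemma \ref{lem:approximation-dim} simultaneously on $G$ and on $H$ along a compatible exhaustive chain, and reduce the $p$-adic calculation to the elementary dimension formula for inductions between finite groups.

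First I would choose an exhaustive chain $(N_n)_{n \in \bbN}$ of open normal subgroups of $G$, which exists because $G$ is finitely generated. Since each $N_n$ is open in $G$ and $H$ carries the subspace topology, the subgroups $H \cap N_n$ are open and normal in $H$, and their intersection is trivial; so $(H \cap N_n)_{n \in \bbN}$ is an exhaustive chain of open normal subgroups of $H$.

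Next I would invoke part (a) of the Lemma in \S\ref{par:induction} to identify
\[ (\Ind_H^G W)^{N_n} \cong \Ind_{HN_n/N_n}^{G/N_n}\bigl( W^{H \cap N_n} \bigr). \]
Since this is induction of a finite-dimensional representation between finite groups, its $k$-dimension is simply
\[ \dim_k (\Ind_H^G W)^{N_n} = |G : HN_n| \cdot \dim_k W^{H \cap N_n}. \]

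Finally I would pass to the limit in $\bbZ_p$. The Approximation Lemma applied to the $G$-representation $\Ind_H^G W$ with the chain $(N_n)$ (which is admissible by part (b) of the Lemma in \S\ref{par:induction}) shows that the left-hand side converges to $\pdim_k^G(\Ind_H^G W)$. By \eqref{eq:index-limit}, $|G:HN_n| \to \Vert G:H \Vert$ in $\bbZ_p$, and the Approximation Lemma applied to $W$ with the chain $(H \cap N_n)$ in $H$ gives $\dim_k W^{H \cap N_n} \to \pdim_k^H W$ in $\bbZ_p$. Continuity of multiplication in $\bbZ_p$ then yields the claimed identity.

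I do not expect a serious obstacle; the only conceptual point is that the argument must cover uniformly the two cases distinguished by the convention of \S\ref{par:index-convention}. When $H$ is open in $G$ one has $\Vert G:H \Vert = |G:H|$ and the indices $|G:HN_n|$ eventually stabilize at $|G:H|$. When $H$ is not open, $\Vert G:H \Vert = 0$ and the factor $|G:HN_n|$ tends to $0$ in $\bbZ_p$, so the product of limits vanishes, matching the limit of the left-hand side.
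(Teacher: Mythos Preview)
Your proof is correct and follows essentially the same route as the paper: choose an exhaustive chain of open normal subgroups in $G$, intersect with $H$ to get one in $H$, use the identification of $N_n$-invariants of the induced representation from \S\ref{par:induction}(a), and pass to the $p$-adic limit via the Approximation Lemma and \eqref{eq:index-limit}. Your write-up is in fact slightly more detailed than the paper's, spelling out the admissibility check and the two cases of the index convention.
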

\begin{proof}
  Let $(N_n)_{n\in \bbN}$ be an exhaustive chain of open subgroups in $G$.
  We observe that $(H \cap N_n)_{n\in\bbN}$ is an exhaustive chain in $H$.
  We combine 
  Lemma \ref{par:induction} (a),
  the Approximation Lemma \ref{lem:approximation-dim} and equation \eqref{eq:index-limit} to obtain
  \begin{align*}
  \pdim_k^G(\Ind_H^G(W)) &= \lim_{n \to \infty} \pdim_k^{G/N_n} \Ind^{G/N_n}_{HN_n/N_n}(W^{H\cap N_n})\\
    &=  \lim_{n \to \infty} |G:HN_n| \dim_k(W^{H\cap N_n}) = \left\Vert G:H \right\Vert \pdim^H_k(W)
  \end{align*}
\end{proof}

  \section{The $p$-adic cardinality function}\label{sec:p-adic-card}
  \subsection{} Throughout $p$ denotes a fixed prime number and $G$ a finitely generated
  virtually pro-$p$ group. We will define the notion of admissible $G$-set and define a
  corresponding admissible Burnside ring. On this ring there is a continuous $p$-adic valued notion of cardinality.

  \begin{definition}\label{def:admissible-G-set}
    A set $X$ with a $G$-action is \emph{admissible}
    if every $x \in X$ has an open stabilizer in $G$ and for every open subgroup $H \leq_o G$
    the set of $H$-invariants $X^H$ is finite.
  \end{definition}

  \subsection{}
  Let $X$ be an admissible $G$-set. It follows from the definition of admissibility that every
  $G$-orbit in $X$ is finite and, moreover, for every open subgroup $H\leq_o G$ there are only
  finitely many orbits isomorphic to $G/H$ contained in $X$.  Let $\Or(G)$ denote the set of
  isomorphism classes of  finite transitive $G$-sets; it is in bijection with the set of conjugacy
  classes of open subgroups of $G$. Now we can write
  $$ X \cong \bigsqcup_{ Z \in \Or(G)} \iota(X,Z)\:  Z  $$
  as a disjoint union of finite orbits, where each orbit type $Z$ occurs with a finite
  multiplicity $\iota(X,Z) \in \bbN_0$.
  Clearly, the isomorphism type of $X$ is uniquely determined by the multiplicities $\iota(X,Z)$.
  Finite disjoint unions and finite direct products of admissible $G$-sets are admissible.

  \subsection{The admissible Burnside ring}
  Let $\admA(G) = \calM(\Or(G),\bbZ_p)$ denote the free profinite $\bbZ_p$-module on $\Or(G)$;
  compare \S\ref{sec:free-profinite-module}.  Here we should not think of the elements of $\admA(G)$ as
  functions; instead it is  more convenient to denote $\alpha \in \admA(G)$ as a
  formal series
  $$ \alpha = \sum_{Z \in \Or(G)} \alpha_Z Z$$
  with $\bbZ_p$-coefficients.
  For $\alpha, \beta \in \admA(G)$ we define
  $$\alpha\cdot\beta = \sum_{Z \in \Or(G)} \Bigl(\sum_{X, Y\in \Or(G)} \iota(X\times Y,Z) \alpha_X \beta_Y\Bigr) Z $$
  Note that for each $Z \in \Or(G)$ there are only finitely many $X, Y \in \Or(G)$ such that $X\times Y$ contains an orbit
  isomorphic to $Z$.  This defines an associative, commutative multiplication on $\admA(G)$
  which becomes thus a profinite $\bbZ_p$-algebra.

  We will call $\admA(G)$ the \emph{admissible Burnside ring} of $G$. Let $X$ be an admissible $G$-set,
  then $\iota(X) \in \admA(G)$ will denote the element
  $$ \iota(X) = \sum_{Z \in \Or(G)} \iota(X,Z) Z.$$
  We note that $\iota(X) + \iota(Y) = \iota( X \sqcup Y)$ and $\iota(X) \cdot \iota(Y) = \iota(X\times Y)$.
  \begin{lemma}[Universal property]\label{lem:universal-property-burnside-ring}
    Let $A$ be a profinite $\bbZ_p$-algebra.
    Let $f$ be a function which assigns to every finite $G$-set $X$ an element $f(X) \in A$
    in such a way that
    \begin{enumerate}[  (i)]
    \item\label{it:zero-convergent-ass} $f_{|\Or(G)}$ is $0$-convergent
    \item\label{it:additive-ass} $f(X \sqcup Y) = f(X) + f(Y)$ and
    \item\label{it:multiplicative-ass} $f(X \times Y) = f(X)  f(Y)$ for all finite $G$-sets $X$ and $Y$.
    \end{enumerate}
    Then there is a unique homomorphism $\widetilde{f}\colon \admA(G)\to A$ of profinite $\bbZ_p$-algebras
    with $\widetilde{f}(\iota(X)) = f(X)$ for all finite $G$-sets.
  \end{lemma}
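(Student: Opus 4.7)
The plan is to apply the universal property of $\admA(G) = \calM(\Or(G),\bbZ_p)$ as the free profinite $\bbZ_p$-module on $\Or(G)$ stated in \S\ref{sec:free-profinite-module}. Hypothesis \eqref{it:zero-convergent-ass} is exactly the assertion that $f_{|\Or(G)}$ is $0$-convergent, so the cited universal property immediately supplies a unique continuous $\bbZ_p$-linear map $\widetilde{f}\colon \admA(G) \to A$ with $\widetilde{f}(Z) = f(Z)$ for every $Z \in \Or(G)$. Uniqueness among ring homomorphisms is then automatic, since any continuous $\bbZ_p$-algebra homomorphism satisfying $\widetilde{f}(\iota(X)) = f(X)$ is in particular continuous and $\bbZ_p$-linear and agrees with $f$ on $\Or(G)$.

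Next I would promote the equality on orbits to all finite $G$-sets. Since such an $X$ has only finitely many orbits, $\iota(X) = \sum_{Z} \iota(X,Z) Z$ is a finite $\bbZ_p$-combination in $\admA(G)$, and repeated application of hypothesis \eqref{it:additive-ass} gives $f(X) = \sum_Z \iota(X,Z) f(Z)$. Combining these yields $\widetilde{f}(\iota(X)) = f(X)$. The central remaining task is to show that $\widetilde{f}$ is multiplicative. On basis elements $Z,Z' \in \Or(G)$, the product in $\admA(G)$ is exactly $Z\cdot Z' = \iota(Z \times Z')$ by definition, and therefore
\[
\widetilde{f}(Z \cdot Z') \;=\; f(Z \times Z') \;=\; f(Z)\,f(Z') \;=\; \widetilde{f}(Z)\,\widetilde{f}(Z'),
\]
using hypothesis \eqref{it:multiplicative-ass}.

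To extend multiplicativity to all of $\admA(G)$, I would run a standard density argument. Set $F(\alpha,\beta) = \widetilde{f}(\alpha\beta) - \widetilde{f}(\alpha)\widetilde{f}(\beta)$; as multiplication is (jointly) continuous in the profinite $\bbZ_p$-algebras $\admA(G)$ and $A$, the map $F$ is continuous and $\bbZ_p$-bilinear. Fixing $\beta = Z' \in \Or(G)$, the map $F(-,Z')$ is continuous $\bbZ_p$-linear and vanishes on the basis $\Or(G)$, hence on the dense $\bbZ_p$-submodule of finitely supported functions, hence everywhere. Repeating the argument in the second variable with $\alpha \in \admA(G)$ arbitrary forces $F \equiv 0$.

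The argument is largely formal; the only point that requires care is the two-variable density step, which relies on joint continuity of multiplication in the profinite $\bbZ_p$-algebra $\admA(G)$ and the density of the $\bbZ_p$-span of $\{\delta_Z \mid Z \in \Or(G)\}$ in it. Both ingredients are standard for free profinite modules, so no serious obstacle is expected.
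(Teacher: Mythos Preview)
Your proposal is correct and follows essentially the same approach as the paper: use the universal property of the free profinite $\bbZ_p$-module on $\Or(G)$ to obtain the additive homomorphism, then a density/continuity argument for multiplicativity. The only cosmetic difference is in the density step: the paper observes directly that $\iota(\{\text{finite $G$-sets}\})$ is dense in $\admA(G)$ (since $\bbN_0$ is dense in $\bbZ_p$, cf.\ Lemma~\ref{lem:finite-dim-dense}) and applies hypothesis \eqref{it:multiplicative-ass} there, whereas you run the standard two-variable bilinearity argument on the $\bbZ_p$-span of $\Or(G)$. Both are routine and equivalent in strength.
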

  \begin{proof}
    Since $\admA(G)$ is the free profinite $\bbZ_p$-module on $\Or(G)$
    assumptions \eqref{it:zero-convergent-ass} and \eqref{it:additive-ass}
    imply the existence of a unique homomorphism $\widetilde{f} \colon \admA(G) \to A$
    of profinite $\bbZ_p$-modules with $\widetilde{f} \circ \iota = f$.
    It remains to show that $\widetilde{f}$ is multiplicative.
    We observe that, since $\bbN_0$ is dense in $\bbZ_p$, the image under $\iota$ of the set of finite
    $G$-sets is dense in $\admA(G)$;
    compare with the proof of Lemma~\ref{lem:finite-dim-dense}.
    Now continuity of $\widetilde{f}$ and assumption \eqref{it:multiplicative-ass} yield the claim.
  \end{proof}
  
  \begin{theorem}
    Let $G$ be a finitely generated virtually pro-$p$ group.
    There is a unique continuous homomorphism
    $\pcard^G \colon \admA(G) \to \bbZ_p$
    of $\bbZ_p$-algebras
    such that $\pcard^G(\iota(X)) = |X|$ for every finite $G$-set $X$.
  \end{theorem}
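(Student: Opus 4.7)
My plan is to apply the universal property of the admissible Burnside ring (Lemma \ref{lem:universal-property-burnside-ring}) to the function $f(X) = |X| \in \bbZ_p$ on finite $G$-sets, with $A = \bbZ_p$. Additivity $f(X \sqcup Y) = |X| + |Y| = f(X) + f(Y)$ and multiplicativity $f(X \times Y) = |X|\cdot|Y| = f(X)f(Y)$ are immediate from elementary cardinality. Once the hypotheses of the lemma are verified, the resulting continuous ring homomorphism $\widetilde{f}$ is the desired $\pcard^G$, and uniqueness is built into the lemma.

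The nontrivial step is verifying condition (i): for every $\nu \in \bbN$, all but finitely many $Z = [G/K] \in \Or(G)$ satisfy $p^\nu \mid |Z| = [G:K]$. I would prove this as follows. Fix an open pro-$p$ subgroup $H \leq_o G$ and set $m = [G:H]$. For any open subgroup $K \leq_o G$,
\[
[G:K] = [G : HK]\cdot [HK : K] = [G:HK]\cdot [H:H\cap K].
\]
Because $H$ is pro-$p$ and $H\cap K$ is open in $H$, the index $[H:H\cap K]$ is a power of $p$; also $[G:HK] \leq m$. Hence if $p^\nu \nmid [G:K]$, then $[H:H\cap K] < p^\nu$, and therefore
\[
[G:K] \;<\; m\cdot p^\nu.
\]

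The proof is completed by the standard fact that a finitely generated profinite group has only finitely many open subgroups of any bounded index: since $G$ is topologically finitely generated, there are only finitely many continuous homomorphisms from $G$ to the symmetric group $S_{N}$, hence only finitely many open normal subgroups of index at most $N!$; their intersection $G_N$ is thus open, and every open subgroup of $G$ of index $\leq N$ contains $G_N$ and so corresponds to a subgroup of the finite group $G/G_N$. Applied with $N = m\cdot p^\nu$, this bounds the number of conjugacy classes of open subgroups $K$ with $p^\nu \nmid [G:K]$ by a finite quantity, establishing $0$-convergence. The main obstacle is purely this combinatorial/profinite bound; the rest is a direct application of the universal property.
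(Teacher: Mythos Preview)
Your proof is correct and follows essentially the same route as the paper: apply the universal property of $\admA(G)$, reduce to showing that $|\cdot|$ is $0$-convergent on $\Or(G)$, and verify this by factoring $|G:K|$ through an open pro-$p$ subgroup $H$ to see that only finitely many orbit types have cardinality not divisible by $p^\nu$, using that a finitely generated profinite group has only finitely many open subgroups of bounded index. Your write-up is slightly more detailed (making the index factorization $[G:K] = [G:HK]\cdot[H:H\cap K]$ explicit and sketching why there are finitely many subgroups of bounded index), but the argument is the same.
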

  \begin{proof}
    We use the universal property of the admissible Burnside ring. The function $|\cdot|$ is
    additive and multiplicative, therefore we only need to show that
    $|\cdot| \colon \Or(G) \to \bbZ_p$ is
    $0$-convergent. Indeed, fix some open normal pro-$p$ subgroup $H \normal_o G$.  The
    cardinality $|Z|$ of an orbit type is the index of the stabilizer $K \leq_o G$ of some
    point in $Z$.  Since $G$ is finitely generated it has only finitely many open subgroups of a given index.
    Further, $|G:K|$ is of the form $dp^\nu$ for some $\nu \in \bbN_0$ and $d$
    a divisor of the index $|G:H|$. We deduce that the map is $0$-convergent and this completes the proof.
  \end{proof}

  \begin{definition}
    Let $X$ be an admissible $G$-set,
    then $\pcard^G(X) := \pcard^G(\iota(X))$ will be called the \emph{$p$-adic cardinality}
    of $X$.
  \end{definition}

  \subsection{Restriction to open subgroups}\label{par:res-card}
  Let $G$ be a finitely generated virtually pro-$p$ group and $H\leq_o G$ be an open subgroup.
  Given an admissible $G$-set $X$, we may restrict the action to $H$ in order to obtain
  an admissible $H$-set $X_{|H}$.
  \begin{proposition*}
    There is a unique homomorphism of profinite $\bbZ_p$-algebras
    $$\Res_H^G \colon \admA(G) \to \admA(H)$$ such that $\Res_H^G(\iota(X)) = \iota(X_{|H})$ for
    every admissible $G$-set $X$.
    Moreover, $\pcard^G(\alpha) = \pcard^H(\Res_H^G(\alpha))$ for all $\alpha \in \admA(G)$.
  \end{proposition*}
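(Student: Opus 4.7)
The plan is to apply the universal property of the admissible Burnside ring (Lemma~\ref{lem:universal-property-burnside-ring}) to the function $f$ sending a finite $G$-set $X$ to $\iota(X_{|H}) \in \admA(H)$. Additivity and multiplicativity of $f$ are immediate, since restriction to $H$ commutes with disjoint unions and cartesian products of $G$-sets. Once the $0$-convergence condition is verified, the lemma produces the unique continuous $\bbZ_p$-algebra homomorphism $\Res_H^G\colon \admA(G) \to \admA(H)$ satisfying $\Res_H^G(\iota(X)) = \iota(X_{|H})$ for every finite $G$-set $X$.

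The core step is thus to establish $0$-convergence of $f$ on $\Or(G)$. Fix a finite set $F \subseteq \Or(H)$; I would show that all but finitely many $[G/K] \in \Or(G)$ satisfy $\iota((G/K)_{|H}, W) = 0$ for every $W \in F$. The double coset decomposition
\[
(G/K)_{|H} \cong \bigsqcup_{g \in H\backslash G/K} H/(H \cap gKg^{-1})
\]
shows that $W = H/L$ occurs as an $H$-orbit of $(G/K)_{|H}$ only if some $G$-conjugate $K'$ of $K$ satisfies $H \cap K' = L$. In that case $L \leq K'$, and the standard bijection between $H$-cosets in $HK'$ and $(H\cap K')$-cosets in $K'$ yields $[K':L] = [HK':H] \leq [G:H]$. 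Since $L$ is open in $G$, its normal core $N \normal_o G$ is open, and the candidates for $K'$ correspond to subgroups of the finite group $G/N$ containing $L/N$ with index bounded by $[G:H]$; this is a finite set. Hence only finitely many $G$-conjugacy classes $[K]$ can contribute $W$, and running over the finitely many $W \in F$ gives the required $0$-convergence.

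To extend the identity $\Res_H^G(\iota(X)) = \iota(X_{|H})$ to all admissible $G$-sets, I would view $\iota(X) = \sum_{Z \in \Or(G)} \iota(X,Z)\, Z$ as a convergent formal series in $\admA(G)$ and apply continuity and $\bbZ_p$-linearity of $\Res_H^G$; by the $0$-convergence established above, for each fixed $W \in \Or(H)$ only finitely many $Z$ contribute a nonzero coefficient of $W$ in $\iota(Z_{|H})$, so evaluation at $W$ gives the finite identity $\iota(X_{|H}, W) = \sum_{Z}\iota(X, Z)\,\iota(Z_{|H}, W)$. For the final assertion, the composition $\pcard^H \circ \Res_H^G$ is a continuous $\bbZ_p$-algebra homomorphism $\admA(G) \to \bbZ_p$ taking $\iota(X)$ to $|X_{|H}| = |X|$ on every finite $G$-set, so the uniqueness clause of the theorem characterising $\pcard^G$ forces $\pcard^H \circ \Res_H^G = \pcard^G$.

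The principal obstacle is the double coset bookkeeping in the second paragraph: extracting from ``$H/L$ occurs in $(G/K)_{|H}$'' the effective index bound $[K':L] \leq [G:H]$ via a suitable $G$-conjugate of $K$, and combining this with finiteness of open subgroups of bounded index above $L$ to control the contributing $G$-conjugacy classes.
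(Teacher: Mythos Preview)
Your proposal is correct and follows essentially the same approach as the paper: invoke the universal property of the admissible Burnside ring, verify additivity, multiplicativity, and $0$-convergence, then deduce the cardinality identity from uniqueness of $\pcard^G$. The paper merely asserts the key $0$-convergence observation (for each fixed $Y \in \Or(H)$ only finitely many $Z \in \Or(G)$ have $Y$ as an $H$-orbit in $Z_{|H}$), whereas you supply the double-coset and index-bound argument that justifies it.
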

  \begin{proof}
    Existence and uniqueness of the restriction homomorphism follow from the universal
    property of the admissible Burnside ring.  It is clear that restriction is additive and
    multiplicative, thus it remains to show that $\Res_H^G \colon \Or(G) \to \admA(H)$ is
    $0$-convergent.  This follows from the observation that for each fixed $Y \in \Or(H)$
    there are only finitely many $Z \in \Or(G)$ such that $Y$ occurs as an $H$-orbit in
    $Z_{|H}$.

    The equality $\pcard^G(\alpha) = \pcard^H(\Res_H^G(\alpha))$ is a direct consequence of
    the uniqueness of cardinality and restriction homomorphisms.
  \end{proof}
  
  \subsection{Inflation}
  Let $\pi \colon G \to H$ be a surjective homomorphism of finitely generated virtually
  pro-$p$ groups.  Every admissible $H$-set $Y$ of $H$ becomes an admissible $G$-set via
  $\pi$; it will be denoted by $\infl_H^G(Y)$ and is called the \emph{inflation} of $Y$ via
  $\pi$.
  \begin{proposition*}
    There is a unique homomorphism of profinite $\bbZ_p$-algebra
    $$ \infl_H^G \colon \admA(H) \to \admA(G)$$
    such that $\infl_H^G(\iota(Y)) = \iota(\infl_H^G(Y))$ for every admissible $H$-set $Y$.
    Furthermore, $\pcard^G(\infl_H^G(\alpha)) = \pcard^H(\alpha)$ for all
    $\alpha \in \admA(H)$.
  \end{proposition*}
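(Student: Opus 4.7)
The plan is to mimic the proof of the restriction proposition in \S\ref{par:res-card}, using the universal property of the admissible Burnside ring from Lemma~\ref{lem:universal-property-burnside-ring} as the main tool. I would define $f\colon \{\text{finite } H\text{-sets}\} \to \admA(G)$ by $f(Y) = \iota(\infl_H^G(Y))$. Since inflation along $\pi$ does not change the underlying set but merely pulls back the action, the identities $\infl_H^G(Y_1 \sqcup Y_2) \cong \infl_H^G(Y_1)\sqcup \infl_H^G(Y_2)$ and $\infl_H^G(Y_1 \times Y_2) \cong \infl_H^G(Y_1) \times \infl_H^G(Y_2)$ are immediate as $G$-sets, so $f$ is additive and multiplicative. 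Granted $0$-convergence (the main point, treated below), the universal property produces a unique continuous homomorphism of profinite $\bbZ_p$-algebras $\infl_H^G \colon \admA(H) \to \admA(G)$ with $\infl_H^G(\iota(Y)) = \iota(\infl_H^G(Y))$ for every admissible $H$-set $Y$; uniqueness already holds at the level of finite $G$-sets because such classes are dense.

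The only nontrivial input is the $0$-convergence of $f_{|\Or(H)}$. Here I would use that if $Y \in \Or(H)$ is the transitive $H$-set $H/K$ for some open subgroup $K \leq_o H$, then $\infl_H^G(Y)$ is still transitive as a $G$-set and equals $G/\pi^{-1}(K)$, because $\pi\colon G \to H$ is surjective. Thus inflation restricts to a well-defined map $\Or(H) \to \Or(G)$, and moreover this map is injective: if $\pi^{-1}(K_1)$ and $\pi^{-1}(K_2)$ are $G$-conjugate, then applying $\pi$ shows $K_1$ and $K_2$ are $H$-conjugate. A basic open submodule of $\admA(G)$ has the form $\calU(F,\epsilon)$ for a finite $F \subseteq \Or(G)$, and for $Y \in \Or(H)$ the element $\iota(\infl_H^G(Y))$ lies outside $\calU(F,\epsilon)$ only when $\infl_H^G(Y) \in F$. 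By injectivity, at most $|F|$ such $Y$ exist, which gives $0$-convergence.

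For the second assertion, consider the composition $\pcard^G \circ \infl_H^G \colon \admA(H) \to \bbZ_p$. It is a continuous homomorphism of profinite $\bbZ_p$-algebras, and for any finite $H$-set $Y$ it sends $\iota(Y)$ to $\pcard^G(\iota(\infl_H^G(Y))) = |\infl_H^G(Y)| = |Y|$, since inflation does not alter the underlying set. By the uniqueness part of the theorem defining $\pcard^H$, we conclude $\pcard^G \circ \infl_H^G = \pcard^H$, which gives the claimed identity on all of $\admA(H)$.

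The main (modest) obstacle is the $0$-convergence verification; everything else is a direct application of the universal property and the uniqueness clauses already in place. I do not foresee any additional subtleties, and no use of the finite generation hypothesis is needed beyond what is already built into the definition of $\admA(G)$.
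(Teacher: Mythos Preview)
Your proposal is correct and follows exactly the approach the paper intends: the paper's proof consists of the single sentence ``As before this is an immediate consequence of the universal property of the admissible Burnside ring,'' and you have simply spelled out the verification of the hypotheses of Lemma~\ref{lem:universal-property-burnside-ring} (additivity, multiplicativity, and $0$-convergence via the injectivity of $\Or(H)\to\Or(G)$) together with the uniqueness argument for $\pcard^H$.
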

  \begin{proof}
    As before this is an immediate consequence of the universal property of the admissible Burnside ring.
  \end{proof}

  \begin{lemma}[Approximation Lemma]\label{lem:approximation-card}
    Let $G$ be a finitely generated virtually pro-$p$ group and let $X$ be an admissible $G$-set.
    Every exhaustive chain $(N_n)_{n \in \bbN}$ in $G$ 
     satisfies
    \[ \lim_{n \to \infty} \pcard^{G/N_n} X^{N_n} = \pcard^G X.\]
    In particular, if every $N_n$ is open in $G$, then
    \[\lim_{n \to \infty} |X^{N_n}| = \pcard^G X.\]
  \end{lemma}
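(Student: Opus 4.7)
The plan is to mirror the proof of the Approximation Lemma~\ref{lem:approximation-dim}. Since inflation preserves the $p$-adic cardinality, we have
\[ \pcard^{G/N_n}(X^{N_n}) = \pcard^G\bigl(\infl_{G/N_n}^G(\iota(X^{N_n}))\bigr), \]
so by continuity of $\pcard^G \colon \admA(G) \to \bbZ_p$ it suffices to prove that the sequence $\infl_{G/N_n}^G(\iota(X^{N_n}))$ converges to $\iota(X)$ in $\admA(G)$.

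The key combinatorial observation is that for any closed normal subgroup $N \normal_c G$ and any finite transitive $G$-set $Y = G/L$, one has $Y^N = Y$ if $N \subseteq L$ and $Y^N = \emptyset$ otherwise: the condition $N \cdot gL = gL$ reads $g^{-1}Ng \subseteq L$, which by normality is just $N \subseteq L$, independent of $g$. Decomposing $X$ into its $G$-orbits, the $G$-set $X^{N_n}$ is therefore the disjoint union of exactly those orbits of $X$ whose point stabilizers contain $N_n$, with no collapsing or creation of orbit types.

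To verify convergence, fix a basic open neighborhood $\iota(X) + \calU(A,\epsilon)$ of $\iota(X)$ for some finite $A \subseteq \Or(G)$. For each $[Z] \in A$ a representative has open point stabilizers, whose normal cores are open normal subgroups of $G$; intersecting over $[Z]\in A$ yields a single open normal subgroup $H \normal_o G$ contained in the stabilizer of every element of every $Z \in A$. The nested closed sets $N_n \setminus H$ have empty intersection, so by compactness of $G$ there is some $n_0$ with $N_n \subseteq H$ for all $n \geq n_0$. For such $n$ and $[Z] \in A$, every orbit of $X$ of type $Z$ has stabilizer containing $H \supseteq N_n$ and hence contributes to $X^{N_n}$ with unchanged multiplicity; conversely, no orbit of type $Z$ can newly appear, since $X^{N_n} \subseteq X$ decomposes into sub-orbits of $X$ itself. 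Thus $\infl_{G/N_n}^G(\iota(X^{N_n}))$ and $\iota(X)$ agree on $A$ for $n \geq n_0$, giving the required convergence.

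The final statement follows because when $N_n$ is open, $G/N_n$ is finite, and then $\pcard^{G/N_n}(X^{N_n}) = |X^{N_n}|$ by the defining property of the $p$-adic cardinality on finite $G$-sets. The only delicate point in the above is the compactness step forcing $N_n \subseteq H$ eventually; this is the direct analogue of the reduction used in Lemma~\ref{lem:approximation-dim} and presents no real obstacle.
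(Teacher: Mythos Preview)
Your proof is correct and follows essentially the same approach as the paper's: reduce via inflation and continuity of $\pcard^G$ to showing that $\infl_{G/N_n}^G(\iota(X^{N_n})) \to \iota(X)$ in $\admA(G)$, then observe that the multiplicity of each orbit type $Z$ stabilizes once $N_n$ is contained in the stabilizers of $Z$. Your version spells out the compactness argument and the combinatorial fact $Y^{N_n} \in \{Y,\emptyset\}$ for transitive $Y$ more explicitly than the paper does, but the underlying argument is the same.
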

  \begin{proof}
    We proceed as in the proof of Lemma \ref{lem:approximation-dim}.
    Since $\pcard^G(\infl_{G/N_n}^G(X^{N_n})) = \pcard^{G/N_n} X^{N_n}$
    and the $p$-adic cardinality is continuous, it is sufficient to check
    that the sequence $\infl_{G/N_n}^G(X^{N_n})$ converges to $X$ in $\admA(G)$.
    However, for every orbit type $Z \in \Or(G)$
    the multiplicity $\iota(\infl_{G/N_n}^G(X^{N_n}),Z)$ equals $\iota(X,Z)$
    as soon as the action of $G$ on $Z$ factors through $G/N_n$. 
  \end{proof}

  \section{$p$-adic topological invariants}\label{sec:the-invariants}

  Let $X$ be a connected, locally path-connected and semilocally simply-connected topological
  space. The fundamental group of $X$ at some fixed basepoint $x_0\in X$ will be denoted by
  $\Gamma = \pi_1(X,x_0)$.  We write $q\colon \widetilde{X} \to X$ to denote the universal covering space of
  $X$. The choice of a base point $\tilde{x}_0 \in \widetilde{X}$ with $q(\tilde{x}_0) = x_0$ yields
  an isomorphism between $\Gamma$ and the group of deck transformations of $\widetilde{X}$.
  In this way the universal covering carries a left action of $\Gamma$  such that
  $\widetilde{X}/\Gamma \cong X$.  Throughout we fix a prime number $p$. The purpose of this
  section is to construct $p$-adic-valued invariants of $X$ for every virtual pro-$p$
  completion of $\Gamma$.

  \begin{definition}\label{def:v-pro-p-completion}
    A \emph{virtual pro-$p$ completion $(\phi, G)$} of $\Gamma$ is
    a homomorphism $\phi \colon \Gamma \to G$
    with dense image to a \emph{finitely generated virtually pro-$p$ group} $G$.
  \end{definition}

  \subsection{Direct limits of cohomology groups}\label{sec:direct-limit}
  Let $(\phi,G)$ be a virtual pro-$p$ completion of $\Gamma$ and let $R$ be a commutative
  ring.

  The inverse image $\Gamma_K = \phi^{-1}(K)$ of an open subgroup $K \leq_o G$ is a finite index subgroup
  of $\Gamma$.  Moreover, for open subgroups $K_1 \subseteq K_2 \leq_o G$ we obtain a
  corresponding finite sheeted covering map
  $$q_{K_1,K_2} \colon \widetilde{X}/\Gamma_{K_1} \to \widetilde{X}/\Gamma_{K_2}.$$ For
  every $j \in \bbN_0$ the maps $q_{N_1,N_2}^*$ provide a directed system of cohomology $R$-modules
  $(H^j(\widetilde{X}/\Gamma_N,R))_{N \normal_o G}$ indexed by
  the open normal subgroups of~$G$.  The direct limit will be denoted by
  $$\bar{H}^j(X;\phi,R) = \varinjlim_{N\normal_o G} H^j(\widetilde{X}/\Gamma_N;R).$$
  More generally, let $A \subseteq X$ be a subset.
  For brevity we put $A_K = q_{K,G}^{-1}(A) \subseteq \widetilde{X}/\Gamma_K$ for any
  open subgroup $K \leq_o G$.
  We define
  $$\bar{H}^j(X,A;\phi,R) =  \varinjlim_{N\normal_o G} H^j(\widetilde{X}/\Gamma_N, A_N; R).$$
  The canonical map
  $H^j(\widetilde{X}/\Gamma_N, A_N;R) \to \bar{H}^j(X, A;\phi,R)$ will be denoted
  by $i_N$.  The group $\Gamma/\Gamma_N$ acts $R$-linearly on
  $H^j(\widetilde{X}/\Gamma_N, A_N;R)$.  Since moreover, by assumption, the image
  of $\phi$ is dense, the homomorphism $\phi$ yields an isomorphism
  $\Gamma/\Gamma_N \cong G/N$. Using this isomorphism we obtain an $R$-linear representation of $G/N$
  on $H^j(\widetilde{X}/\Gamma_N, A_N;R)$.  In the limit this provides an
  $R$-linear representation of $G$ on $\bar{H}^j(X,A; \phi,R)$. The action of $G$ is smooth,
  i.e.\ the stabilizer of every element is open in $G$.

  \begin{lemma}\label{lem:exact-invariants}
    Let $A \subseteq X$ be a subspace. Let $R$ be a commutative ring such 
    that $p$ is invertible in~$R$.

    (a) Let $K \leq_oG$ be an open pro-$p$ subgroup.  The
    space $\bar{H}^j(X,A;\phi,R)^K$ of $K$-invariants is canonically isomorphic
    to $H^j(\widetilde{X}/\Gamma_K, A_K ;R)$ as $R$-module.

    (b) Let $K \normal_c G$ be a closed normal pro-$p$ subgroup and let $f \colon G \to G/K$
    denote the factor map.  There is a canonical isomorphism
    $$\bar{H}^j(X,A; f\circ\phi,R) \cong \bar{H}^j(X,A;\phi,R)^K$$
    of $R$-modules with $G/K$-action.
 
  \end{lemma}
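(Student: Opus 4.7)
The plan is to exploit two key facts: that $p$ is invertible in $R$, so that for any finite $p$-group $H$ acting freely as deck transformations on a regular covering $Y \to Y/H$, the pullback identifies $H^j(Y/H; R)$ with $H^j(Y; R)^H$ (standard averaging/transfer); and that the action of any closed pro-$p$ subgroup of $G$ on a fixed $M_N := H^j(\widetilde{X}/\Gamma_N, A_N; R)$ factors through a finite quotient. Both parts of the lemma then follow by combining these with a commutation-of-invariants-with-filtered-colimit argument.

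For part (a), I would first note that the open normal subgroups $N \normal_o G$ with $N \subseteq K$ are cofinal in the directed system defining $\bar{H}^j(X, A; \phi, R)$: any $N_0 \normal_o G$ can be refined by $N_0 \cap K_0$, where $K_0 \normal_o G$ is some open normal subgroup contained in $K$ (which exists because $K$ is open). For such $N \subseteq K$ the covering $\widetilde{X}/\Gamma_N \to \widetilde{X}/\Gamma_K$ is regular with deck group $\Gamma_K/\Gamma_N \cong K/N$, a finite $p$-group, and averaging identifies $H^j(\widetilde{X}/\Gamma_K, A_K; R)$ with $M_N^{K/N} = M_N^K$ via the pullback; these identifications are compatible with the transition maps of the subsystem. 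The crucial remaining step is that $K$-invariants commute with the colimit: given a $K$-fixed class $[m_{N_0}]$, the action of $K$ on $M_{N_0}$ factors through the finite group $KN_0/N_0$, so only finitely many ``eventual equality'' conditions arise; passing to a common refinement $N_1 \subseteq N_0 \cap K_0$ produces an element $m_{N_1}$ that is genuinely $K$-fixed in $M_{N_1}$ (using that $m_{N_1}$, being in the image from $M_{N_0}$, is automatically $N_0/N_1$-invariant, and that lifts of representatives of $KN_0/N_0$ together with $N_0/N_1$ generate $KN_1/N_1$). Hence $m_{N_1}$ lifts to $H^j(\widetilde{X}/\Gamma_K, A_K; R)$, proving surjectivity onto the $K$-invariants; injectivity of the canonical map follows from the same averaging.

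For part (b), the index category for $f\circ\phi$ corresponds under $f^{-1}$ to the subcategory of open normal subgroups of $G$ containing $K$, and the canonical map $\bar{H}^j(X, A; f\circ\phi, R) \to \bar{H}^j(X, A; \phi, R)$ is induced by this inclusion. For $N \supseteq K$ the $K$-action on $M_N$ is trivial, so the image lies in $\bar{H}^j(X, A; \phi, R)^K$. For surjectivity I would rerun the refinement argument from (a), which requires only that $K$ is closed pro-$p$, not open, to represent a $K$-fixed class by an $m_{N_1} \in M_{N_1}^{KN_1/N_1}$; averaging then identifies this space with the image of the injective pullback $M_{KN_1} \hookrightarrow M_{N_1}$, and since $KN_1 \supseteq K$, the resulting class lies in the subsystem and has the right image. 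Injectivity is analogous: two classes from the subsystem that agree in some $M_{N''}$ already agree in $M_{KN''}$ by injectivity of the pullback, and $KN'' \supseteq K$ keeps us in the subsystem. The $G/K$-equivariance is formal from naturality of the pullback maps. The main obstacle throughout is the refinement step that promotes a class $K$-fixed in the colimit to a stage where it is actually $K$-fixed in $M_N$; this hinges on the finiteness of $KN/N$ for open normal $N$ together with the injectivity of the averaging embedding.
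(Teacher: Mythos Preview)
Your proposal is correct and follows essentially the same route as the paper: the transfer/averaging argument for finite $p$-group coverings (with $p$ invertible in $R$) giving injectivity of $q_{N,K}^*$ with image the $K$-invariants, together with a cofinality argument, is exactly what the paper uses. One small streamlining the paper exploits that you could adopt: once you restrict to open normal pro-$p$ subgroups $N\subseteq K$, the canonical map $i_N\colon M_N\to \bar H^j$ is itself \emph{injective} (all further transition maps are injective by transfer), so a class that is $K$-invariant in the colimit automatically has a $K$-invariant representative in $M_N$---your refinement step is then unnecessary. For part (b) the paper simply invokes part (a) after observing (via smoothness) that any $K$-invariant class is already $N$-invariant for some open normal pro-$p$ $N\supseteq K$; your more direct argument via $KN_1$ is equally valid and amounts to the same thing.
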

  \begin{proof}
     (a) For open normal subgroups $N \normal_o G$ with
    $N \subseteq K$ the maps
    $$q_{N,K}^*\colon H^j(\widetilde{X}/ \Gamma_K,A_K;R) \to H^j(\widetilde{X}/\Gamma_N,A_N;R)$$
    form a compatible system and induce a canonical map $i_K$ from the relative cohomology 
    $H^j(\widetilde{X}/\Gamma_K,A_K;R)$ to $\bar{H}^j(X,A;\phi,R)$ such that
    $i_K = i_N \circ q_{N,K}^*$ for all $N \normal_o G$ with $N \subseteq K$.  Since $K$ is a
    pro-$p$ group and $p$ is invertible in $R$, it follows from the argument used in
    \cite[Prop.~3G.1]{Hatcher} that $q_{N,K}^*$ is injective and the image is exactly the
    space of $K$-invariants in $H^j(\widetilde{X}/\Gamma_N,A_N;R)$.  We deduce that $i_K$ is
    injective, since an element in the kernel lies in the kernel of $q_{N,K}^*$ for some small
    $N \normal_o G$.  Clearly, the classes in the image of $i_K$ are $K$-invariant.
    Conversely, let $ \alpha \in \bar{H}^j(X, A;\phi,R)^K$ be a $K$-invariant class. Then
    $\alpha = i_N(\alpha')$ for some $K$-invariant class
    $\alpha' \in H^j(\widetilde{X}/\Gamma_N, A_N; R)^K$ and a small open normal pro-$p$ subgroup
    $N \normal_o G$.  As observed above, $\alpha'$ lies in the image of $q^*_{N,K}$
    and therefore $\alpha$ lies in the image of $i_N \circ q_{N,K}^*= i_K$.

    (b) The module $\bar{H}^j(X,A; f\circ \phi, R)$ can be constructed as the direct limit of
    the $H^j(\widetilde{X}/\Gamma_N,A_N;R)$ over all open normal subgroups
    $N\normal_o G$ \emph{which contain $K$}. This is a subsystem of the directed system used to
    define $\bar{H}^j(X,A;\phi,R)$ and the inclusion of directed systems yields a canonical map
    $$i_K\colon \bar{H}^j(X,A;f\circ \phi, R) \to \bar{H}^j(X,A; \phi,R)$$
    which satisfies
    $i_K \circ i_{N/K} = i_N$ for all $K \subseteq N \normal_o G$.  The map $i_K$ is
    $G$-equivariant and thus takes values in the subspace of $K$-invariants.  Let
    $\alpha \in \bar{H}^j(X,A;\phi,R)^K$. Since the action of $G$ is smooth, there is some
    $N \normal_o G$ such that $\alpha$ is $N$-stable. We may assume that $N$ contains $K$ and
    that $N$ is pro-$p$. By (a) we know that $\alpha$ lies in the image of $i_N$ and
    $i_K \circ i_{N/K} = i_N$ yields that $\alpha$ lies in the image of $i_K$.  As in (a) the
    map $i_K$ is injective since all the maps $q^*_{N_1,N_2}$ are injective; it provides the
    required isomorphism onto the space of $K$-invariants.
  \end{proof}

    \subsection{Homotopy invariance}\label{par:homotopy-invariance}
    Let $f \colon X \to Y$ be a homotopy equivalence
    of connected, locally path-connected and semilocally simply-connected spaces.
    We choose base points $x_0 \in X$ and $y_0 \in Y$ in such a way that
    $f(x_0) = y_0$. The fundamental groups will be denoted by $\Gamma=\pi_1(X,x_0)$
    and $\Delta = \pi_1(Y,y_0)$.
    
    Let $\phi\colon \Gamma \to G$ and $\psi \colon \Delta \to G$
    be virtual pro-$p$ completions
    such that the following diagram commutes.
    \begin{equation}\label{diag:commuting-completions}
      \begin{tikzcd}
        \Gamma = \pi_1(X,x_0) \arrow[rr,"f_*"] \arrow[dr,"\phi"'] & & \pi_1(Y,y_0) = \Delta \arrow[dl,"\psi"] \\
         & G &
      \end{tikzcd}
    \end{equation}
    \begin{lemma*}
      In the situation above,
      the homotopy equivalence $f$ induces an $R$-linear isomorphism
      of smooth representations of $G$
      $$ \bar{f}^* \colon \bar{H}^j(Y;\psi,R) \to \bar{H}^j(X;\phi,R) $$
      for every commutative ring $R$.
    \end{lemma*}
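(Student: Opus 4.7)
The plan is to lift $f$ to every finite covering associated with an open normal subgroup of $G$ and then assemble the lifts into a morphism of the direct systems defining $\bar{H}^j$. First, since $f$ is a homotopy equivalence, the induced map $f_*\colon \Gamma \to \Delta$ is an isomorphism; combined with the commuting triangle \eqref{diag:commuting-completions} this gives, for every open normal subgroup $N \normal_o G$, the identity
\[ \Gamma_N = \phi^{-1}(N) = f_*^{-1}(\psi^{-1}(N)) = f_*^{-1}(\Delta_N), \]
so that $f_*$ restricts to an isomorphism $\Gamma_N \cong \Delta_N$ and induces an isomorphism $\Gamma/\Gamma_N \cong \Delta/\Delta_N$ compatible with the identifications of both sides with $G/N$ via $\phi$ and $\psi$.

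Next I would choose basepoint lifts $\tilde{x}_0 \in \widetilde{X}$ and $\tilde{y}_0 \in \widetilde{Y}$ above $x_0$ and $y_0$ (these determine the deck actions of $\Gamma$ and $\Delta$). By the usual lifting criterion there is a unique lift $\tilde{f}\colon \widetilde{X} \to \widetilde{Y}$ of $f$ with $\tilde{f}(\tilde{x}_0) = \tilde{y}_0$, and it satisfies $\tilde{f}(\gamma\cdot \tilde{x}) = f_*(\gamma)\cdot \tilde{f}(\tilde{x})$ for all $\gamma \in \Gamma$. Because $f_*(\Gamma_N) = \Delta_N$, the map $\tilde{f}$ descends to a continuous $\tilde{f}_N\colon \widetilde{X}/\Gamma_N \to \widetilde{Y}/\Delta_N$. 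Applying the same construction to a homotopy inverse $g\colon Y \to X$ of $f$, and lifting the homotopies $gf \simeq \id_X$ and $fg \simeq \id_Y$ through the covering projections via the homotopy lifting property, produces an inverse $\tilde{g}_N$ and thereby shows that each $\tilde{f}_N$ is itself a homotopy equivalence. In particular $\tilde{f}_N^*\colon H^j(\widetilde{Y}/\Delta_N;R) \to H^j(\widetilde{X}/\Gamma_N;R)$ is an $R$-linear isomorphism.

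Finally, because all the $\tilde{f}_N$ arise from the single map $\tilde{f}$, they commute with the covering projections $q_{K,N}$ for $K \subseteq N$, so the family $(\tilde{f}_N^*)_N$ defines a morphism between the directed systems used to construct $\bar{H}^j(Y;\psi,R)$ and $\bar{H}^j(X;\phi,R)$; passing to the direct limit yields the desired $R$-linear isomorphism $\bar{f}^*$. For $G$-equivariance, one uses that $\tilde{f}_N$ intertwines the deck actions of $\Gamma/\Gamma_N$ and $\Delta/\Delta_N$, and these actions are identified with a common $G/N$-action by the first paragraph; equivariance then passes to cohomology and to the colimit. The main obstacle is the assertion that each $\tilde{f}_N$ is a homotopy equivalence — verifying that the basepoints of the lifts of the homotopies $gf \simeq \id_X$ and $fg \simeq \id_Y$ match up coherently with the descents to $\widetilde{X}/\Gamma_N$ and $\widetilde{Y}/\Delta_N$ is routine in the CW setting but deserves a careful check in the generality of the present section.
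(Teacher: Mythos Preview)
Your proposal is correct and follows essentially the same route as the paper: lift $f$ to the universal covers using the chosen basepoints, use the commuting triangle to see $f_*(\Gamma_N)=\Delta_N$, descend to equivariant homotopy equivalences $\tilde{f}_N$, and pass to the direct limit. The only cosmetic difference is that the paper asserts directly that the lift $\tilde{f}\colon\widetilde{X}\to\widetilde{Y}$ is a homotopy equivalence and then descends, whereas you construct the homotopy inverse level by level; either way works.
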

    \begin{proof}
      Recall that we fixed base points $\tilde{x}_0 \in \widetilde{X}$ and $\tilde{y}_0 \in \widetilde{Y}$
      in order to identify the groups of deck transformations with the fundamental groups.
      Let $\tilde{f} \colon \widetilde{X} \to \widetilde{Y}$ the unique lift of $f$
      such that $\tilde{f}(\tilde{x}_0) = \tilde{y_0}$.
      Then $\widetilde{f}$ is a homotopy equivalence which intertwines the actions of $\Gamma$ and $\Delta$, i.e.
      $$ \tilde{f}(\gamma  x) = f_*(\gamma)  \tilde{f}(x)$$
      for all $x \in \widetilde{X}$ and $\gamma \in \Gamma$.

      For every open normal subgroup $N \normal_o G$, the subgroups $\Gamma_N = \phi^{-1}(N)$
      and $\Delta_N = \phi^{-1}(N)$ satisfy $f_*(\Gamma_N) = \Delta_N$ due to
      \eqref{diag:commuting-completions}. In particular, $\tilde{f}$ induces a homotopy equivalence
      $$ \tilde{f}_N \colon \widetilde{X}/\Gamma_N \to \widetilde{Y}/\Delta_N$$
      which intertwines the actions of $\Gamma/\Gamma_N \cong G/N \cong \Delta/\Delta_N$.
      Thus the map $\tilde{f}_N^*$ induced in the cohomology is an $R$-linear $G/N$-equivariant map.
      The maps  $(\tilde{f}_N^*)_{N\normal_o G}$ are compatible with the directed system and 
      $ \bar{f}^* = \varinjlim_{N \normal_o G} \tilde{f}_N^*$ is the desired isomorphism.
    \end{proof}

    \subsection{The cochain complex}\label{par:cochain-complex}
    Here we give a description
    of a cochain complex which computes $\bar{H}^j(X;\phi,R)$ in the case where $X$ is a connected CW-complex
    with \emph{finite $d$-skeleton} for some $d > j$.

    Let $(C_*,\partial_*)$ be the cellular chain complex of $\widetilde{X}$ (where
    $\widetilde{X}$ is equipped the cellular structure lifted from $X$). Recall that $C_*$ is a chain
    complex of free $\bbZ[\Gamma]$-modules. Since $X$ has a finite $d$-skeleton $C_j$ is a
    finitely generated free $\bbZ[\Gamma]$-module of, say, rank $e_j$ for all $ j \leq d$.  After
    choosing bases $x^{(j)}_1, \dots, x^{(j)}_{e_j} \in C_j$ 
    for the chain modules, the $\bbZ[\Gamma]$-equivariant boundary map
    $\partial_{j+1}\colon \bbZ[\Gamma]^{e_{j+1}} \to \bbZ[\Gamma]^{e_{j}}$ is given by
    right multiplication with a matrix $A_{j+1} \in M_{e_{j+1},e_{j}}(\bbZ[\Gamma])$ on row vectors for all $j < d$,
    i.e.
    $$ \partial_{j+1}(x^{(j+1)}_i) = \sum_{m=1}^{e_{j}}  a_{i,m} x^{(j)}_m.$$
    
    \begin{lemma*}
      Let $R$ be a commutative ring and let $(\phi,G)$ be a virtual pro-$p$ completion of $\Gamma$.
      Under the assumptions above, the cochain complex
      $$ \bar{C}_*: \quad 0 \longrightarrow  C^{\infty}(G,R)^{e_0} \stackrel{\bar{\partial}^0}{\longrightarrow}
      C^{\infty}(G,R)^{e_1} \stackrel{\bar{\partial}^1}{\longrightarrow} \cdots
      \stackrel{\bar{\partial}^{d-1}}{\longrightarrow} C^{\infty}(G,R)^{e_d} \longrightarrow
      0$$ computes the homology $\bar{H}^j(X;\phi,R)$ for all $j \leq d-1$ where the
      coboundary map $\bar{\partial}^j$ is multiplication with the matrix $A_{j+1}$ via the
      \emph{right} regular representation (the elements of $C^\infty(G,R)^{e_j}$ are
      considered to be column vectors).
  \end{lemma*}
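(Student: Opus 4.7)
The plan is to realize $\bar{C}_*$ as the direct limit of the cellular cochain complexes of the finite coverings $\widetilde{X}/\Gamma_N$ and then use exactness of the direct limit functor together with finite generation in low degrees.

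First, I would fix an open normal subgroup $N \normal_o G$ and describe the cellular cochain complex of $\widetilde{X}/\Gamma_N$ with $R$-coefficients in terms of the chosen bases. Since $\widetilde{X}/\Gamma_N \to X$ is a regular covering with deck group $\Gamma/\Gamma_N$, the chosen cellular basis $x^{(j)}_1,\dots,x^{(j)}_{e_j}$ of $C_j$ descends to a free basis of $C_j(\widetilde{X}/\Gamma_N)$ over $\bbZ[\Gamma/\Gamma_N]$ for each $j \leq d$. Dualizing and using $\phi$ to identify $\Gamma/\Gamma_N$ with $G/N$, the cochain module in degree $j$ becomes $C^\infty(G/N,R)^{e_j}$. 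Tracking the boundary $\partial_{j+1}(x^{(j+1)}_i) = \sum_m a_{i,m} x^{(j)}_m$ through this identification, a direct computation shows that an element $\gamma \in \Gamma$ acts on $C^\infty(G/N, R)$ via the right regular representation composed with $\phi$, and consequently $\bar{\partial}^j_N$ is given by right multiplication by $A_{j+1}$ via the right regular action. This yields $H^j(\widetilde{X}/\Gamma_N;R)$ for $j \leq d-1$.

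Next, I would pass to the direct limit over $N \normal_o G$. The transition maps $q_{N_1,N_2}^*$ for $N_1 \subseteq N_2$ correspond, under the above identification, to the canonical inclusions $C^\infty(G/N_2,R) \hookrightarrow C^\infty(G/N_1,R)$ applied componentwise, and these are compatible with the matrix $A_{j+1}$ (because the right action of $\Gamma$ factors consistently through every quotient $G/N$). Hence the directed system of cochain complexes, truncated to degrees $\leq d$, has componentwise direct limit equal to $C^\infty(G,R)^{e_j}$, using the description of $C^\infty(G,R)$ as the union $\bigcup_{N \normal_o G} C^\infty(G/N,R)$ of $N$-invariant locally constant functions. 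The limiting coboundary is again right multiplication by $A_{j+1}$ via the right regular representation, so the limit complex is exactly $\bar{C}_*$ in degrees $\leq d$.

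Finally, since directed colimits of $R$-modules are exact, cohomology commutes with the direct limit. For $j \leq d-1$ both $\bar{C}^j$ and $\bar{C}^{j+1}$ (and also $\bar{C}^{j-1}$) are direct limits of the respective cochain modules, so
\[ H^j(\bar{C}_*) \cong \varinjlim_{N \normal_o G} H^j(\widetilde{X}/\Gamma_N;R) = \bar{H}^j(X;\phi,R), \]
which is the desired identification. The main obstacle is a bookkeeping one: one must verify carefully that the left $\bbZ[\Gamma]$-action used to form $\Hom_{\bbZ[\Gamma]}(C_*,-)$ matches the right regular $G$-representation on $C^\infty(G,R)$ pulled back along $\phi$, and that the bases chosen for $C_j$ yield compatible bases at every level of the directed system so that the transition maps are simply inclusions of invariants; once the conventions line up, the remainder of the argument is formal.
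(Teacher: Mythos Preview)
Your proposal is correct and follows essentially the same approach as the paper: compute the cellular cochain complex at each finite level $N\normal_o G$, identify it with $C^\infty(G/N,R)^{e_j}$ via the chosen bases, and pass to the direct limit using exactness of directed colimits. The paper's only substantive addition is that it makes your ``dualizing'' step precise by writing down the explicit Shapiro-type isomorphism $\Psi\colon \Hom_{\bbZ[\Gamma]}(M,C^{\infty}(\Gamma/\Delta,R)) \to \Hom_{\bbZ[\Delta]}(M,R)$, $\Psi(\beta)(x)=\beta(x)(1_{\Gamma/\Delta})$, and checking it is $R[\Gamma/\Delta]$-equivariant for the left regular action; this is exactly the bookkeeping obstacle you flag at the end, so your proof becomes complete once you carry out that verification.
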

  \begin{proof}
    Let $\Delta \normal_{f.i.} \Gamma$ be a finite index normal subgroup of $\Gamma$.
    Recall that the (cellular) cohomology of $\widetilde{X}/\Delta$ is
    computed by the cochain complex $\Hom_{\bbZ[\Delta]}(C_*,R)$.
    However, for every $\bbZ[\Gamma]$-module $M$ there is a canonical isomorphism
    $$\Psi\colon \Hom_{\bbZ[\Gamma]}(M,C^{\infty}(\Gamma/\Delta,R)) \to  \Hom_{\bbZ[\Delta]}(M,R)$$
    of $R$-modules where $C^{\infty}(\Gamma/\Delta,R)$ is a $\bbZ[\Gamma]$-module via the
    \emph{right} regular action.  To be more precise, the isomomorphism $\Psi$ is defined as
    $\Psi(\beta)(x) = \beta(x)(1_{\Gamma/\Delta})$ for all $ x \in M$.  In addition,
    $\Hom_{\bbZ[\Gamma]}(M,C^{\infty}(\Gamma/\Delta,R))$ is an $R[\Gamma/\Delta]$-module,
    using the action of $\Gamma/\Delta$ on $C^\infty(\Gamma/\Delta,R)$ via the \emph{left}
    regular representation; see \S\ref{par:smooth-representations}.  The $R$-module
    $\Hom_{\bbZ[\Delta]}(M,R)$ also carries an $R[\Gamma/\Delta]$-module structure by imposing
    $(\gamma \Delta \cdot \alpha)(x) = \alpha( \gamma^{-1}x)$ for all $\gamma\in \Gamma$,
    $\alpha \in \Hom_{\bbZ[\Delta]}(M,R)$ and all $x \in M$.
    Now it is easy to see that $\Psi$ is in fact an isomorphism of $R[\Gamma/\Delta]$-modules.

    Taking direct limits is exact, therefore
    the cochain complex
    $$\varinjlim_{N \normal_o G} \Hom_{\bbZ[\Gamma]}( C_*, C^\infty(\Gamma/\Gamma_N,R))$$
    computes the cohomology $\bar{H}^*(X;\phi,R)$.
    We note that $\Gamma/\Gamma_N \cong G/N$.
    By assumption, $C_j$ is a finitely generated (free) $\bbZ[\Gamma]$-module for $j \leq d$ and hence we may
    exchange $\Hom$ and the direct limit, i.e.
    $$\varinjlim_{N \normal_o G} \Hom_{\bbZ[\Gamma]}( C_j, C^\infty(G/N,R)) \cong
    \Hom_{\bbZ[\Gamma]}( C_j, C^\infty(G,R))
     \stackrel{F}{\cong} C^{\infty}(G,R)^{e_j}.$$
    The  isomorphism $F$ is constructed using our chosen bases, this means that
    $F \colon \alpha \mapsto (\alpha(x^{(j)}_1), \dots, \alpha(x^{(j)}_{e_j}))^T$. 

    Finally, we describe the coboundary maps in coordinates.
    Let $f \in C^\infty(G,R)^{e_j}$ with coordinates $(f_1,\dots,f_{e_j})^T$ and write $f = F(\alpha)$.
    Then $\bar{\partial}^j(F(\alpha)) = F(\alpha \circ \partial_{j+1})$
    and
    $$\alpha (\partial_{j+1}(x_i^{(j+1)})) = \sum_{m=1}^{e_{j}} \alpha(a_{i,m}x^{(j)}_m) =
    \sum_{m=1}^{e_{j}} \up{{r(a_{i,m})}}f_m.$$
    In other words $\bar{\partial}^j(f) = r(A_{j+1}) f$.
  \end{proof}
  \begin{remark*}
    Here it is worth noting, that $C^\infty(G,R)$ carries two commuting representations of $G$: the
    \emph{left} and the \emph{right} regular representation; see
    \S\ref{par:smooth-representations}.
    In particular, if we use a matrix $A \in M_{n,m}(\bbZ[\Gamma])$
    to act on $C^\infty(G,R)^m$ using the right regular representation, then kernel and image are
    smooth representations of $G$ via the left regular action.
  \end{remark*}

  \subsection{Definition of $p$-adic Betti numbers}
  Let $k$ be either $\bbQ$ or a finite field of characteristic $\ell \neq p$.
  Let $(\phi,G)$ be a virtual pro-$p$ completion of $\Gamma$.
 
  It follows from Lemma \ref{lem:exact-invariants} that the $k$-representation of $G$ on
  $\bar{H}^j(X;\phi,k)$ is admissible, if $H^j(\widetilde{X}/\Gamma_N; k)$ is finite
  dimensional for all $N \normal_o G$. In this case, the $p$-adic dimension
    $$ b^{[p]}_j(X;\phi,k) = \pdim^G_k \bar{H}^j(X;\phi,k) \in \bbZ_p$$
    will be called the $j$-th \emph{$p$-adic Betti number} of $X$ with respect to $\phi$ and
    with coefficients in $k$.  If $\bar{H}^j(X;\phi,k)$ is not admissible, then we impose
    $ b^{[p]}_j(X;\phi,k) = \infty$.

   \subsection{Definition of $p$-adic torsion}
   Let $(\phi,G)$ be a virtual pro-$p$ completion of $\Gamma$.  Now we consider cohomology
   with coefficients in a commutative ring $R$.  The submodule of $R$-torsion\footnote{ An
     element $\alpha$ in an $R$-module is a \emph{torsion element} if there is a
     non-zero-divisor $r \in R$ with $r\alpha = 0$.} elements $\tors\bar{H}^j(X;\phi,R)$ is
   stable under the $G$-action and will be considered as a $G$-set.  If
   $\tors \bar{H}^j(X;\phi,R)$ is an admissible $G$-set (see \S\ref{def:admissible-G-set}),
   then the $p$-adic cardinality
  $$ t^{[p]}_j(X;\phi, R) = \pcard^G \Bigl(\tors\bar{H}^j(X;\phi, R)\Bigr)  \in \bbZ_p $$
  will be called the $j$-th \emph{$p$-adic torsion} of $X$ with respect to $\phi$ and with coefficients in $R$.
  As for the Betti numbers we put $t^{[p]}_j(X;\phi,R) = \infty$ whenever
  $\tors\bar{H}^j(X;\phi,R)$ is not admissible.

  \begin{remark}
    (a) The $p$-adic Betti numbers and the $p$-adic torsion are homotopy invariants in the
    following sense.  Let $f \colon X \to Y$ be a homotopy equivalence as in
    \S\ref{par:homotopy-invariance}. It follows from Lemma~\ref{par:homotopy-invariance} that the
    are equalities $b_j^{[p]}(X;\phi,k) = b_j^{[p]}(Y;\psi,k)$ and
    $t_j^{[p]}(X;\phi,R) = t_j^{[p]}(Y;\psi,R)$ for virtual pro-$p$ completions $\phi$ and
    $\psi$ as in \eqref{diag:commuting-completions}.

    \medskip
     
     (b) For a closed subset $A \subseteq X$ we define the relative $p$-adic Betti numbers and $p$-adic torsion
     using the relative cohomology groups $\bar{H}^j(X,A;\phi,R)$.

     \medskip
    
     (c) As a case of special interest, we stress that if $X$ is a connected CW-complex with
     finite $j$-skeleton, then $\bar{H}^j(X;\phi,k)$ and the torsion part
     $\tors \bar{H}^j(X,\phi,\bbZ[\frac{1}{p}])$ are always admissible.  In particular, the $j$-th
     $p$-adic Betti number and $j$-th $p$-adic torsion are finite.

     \medskip

    (d) The $p$-adic Betti numbers and $p$-adic torsion take values in $\bbZ_p \cup \{\infty\}$, which will be
    considered as a topologically disjoint union. In particular, a sequence converges to
    $\infty$ exactly if it eventually takes the value $\infty$.  Moreover, we extend the
    addition by imposing $x + \infty = \infty + x = \infty$ for all
    $x \in \bbZ_p \cup \{\infty\}$.

    (e) As usual we define the $p$-adic invariants of a group $\Gamma$ to be the $p$-adic invariants of
    a $K(\Gamma,1)$-space.
  \end{remark}

  \subsection{Virtual invariance}\label{par:virtual-invariance}
  Let $(\phi,G)$ be a virtual pro-$p$ completion of $\Gamma$.
  For every open subgroup $K \leq_o G$, the group $\Gamma_K = \phi^{-1}(K)$
  is a finite index subgroup of $\Gamma$ and $Y = \widetilde{X}/\Gamma_K$ is a finite sheeted covering space of $X$
  with fundamental group $\Gamma_K$.
  In addition, $(\phi_{|\Gamma_K},K)$ is a virtual pro-$p$ completion of $\Gamma_K$.
  \begin{lemma*}
    In the situation described above we have
    \begin{align*}
      b_j^{[p]}(X;\phi,k) &= b_j^{[p]}(Y;\phi_{|\Gamma_K},k) \quad \text{ and }\\
      t_j^{[p]}(X;\phi,R) &= t_j^{[p]}(Y;\phi_{|\Gamma_K},R).
      \end{align*}
    \end{lemma*}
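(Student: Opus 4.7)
The plan is to identify $\bar{H}^j(Y;\phi_{|\Gamma_K},R)$ with the restriction from $G$ to $K$ of $\bar{H}^j(X;\phi,R)$, and then to invoke the fact, already established in the restriction propositions of \S\ref{par:res-dim} and \S\ref{par:res-card}, that restriction to open subgroups preserves the $p$-adic dimension and the $p$-adic cardinality. Once the cohomology modules are canonically matched as $K$-representations, both equalities in the statement fall out of Definition~\ref{def:p-adic-dim} and the analogous definition for torsion.

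First I would observe that $\widetilde{X}$ is also a universal cover of $Y = \widetilde{X}/\Gamma_K$, with deck transformation group $\Gamma_K$, and that for every closed subgroup $H \leq_c K$ the inclusion $\phi^{-1}(H) \subseteq \phi^{-1}(K) = \Gamma_K$ gives $\phi_{|\Gamma_K}^{-1}(H) = \phi^{-1}(H) = \Gamma_H$. In particular, for every $N \normal_o G$ contained in $K$, the finite covering $\widetilde{X}/\Gamma_N$ appears in both directed systems under consideration. The next step, which I expect to be the only step with actual content, is a cofinality argument: for each open normal subgroup $M$ of $K$, its normal core $M^{\ast} = \bigcap_{g \in G} g M g^{-1}$ is an intersection of only finitely many $G$-conjugates (since $[G:M] < \infty$), so $M^{\ast}$ is open and normal in $G$ and satisfies $M^{\ast} \subseteq M \subseteq K$. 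Hence the family $\mathcal{F} = \{N \normal_o G \mid N \subseteq K\}$ is simultaneously cofinal in the open normal subgroups of $K$ and of $G$. Computing both direct limits along $\mathcal{F}$ would then yield a canonical isomorphism $\bar{H}^j(Y;\phi_{|\Gamma_K},R) \cong \bar{H}^j(X;\phi,R)$ of $R$-modules, and since at each finite level the $K/N$-action is visibly the restriction of the $G/N$-action, the isomorphism intertwines the $K$-action on the left with the restriction of the $G$-action on the right.

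With the identification $\bar{H}^j(Y;\phi_{|\Gamma_K},R) \cong \Res_K^G \bar{H}^j(X;\phi,R)$ in place, the conclusion becomes purely formal. Because $K$ is open in $G$, a subgroup of $K$ is open in $K$ iff it is open in $G$, so admissibility (as in \S\ref{par:smooth-representations} for representations and in Definition~\ref{def:admissible-G-set} for $G$-sets) transfers from one side to the other; in particular the two invariants are simultaneously finite or infinite. The proposition of \S\ref{par:res-dim} combined with Definition~\ref{def:p-adic-dim} then produces
\[ b_j^{[p]}(Y;\phi_{|\Gamma_K},k) = \pdim_k^K \bar{H}^j(Y;\phi_{|\Gamma_K},k) = \pdim_k^G \bar{H}^j(X;\phi,k) = b_j^{[p]}(X;\phi,k), \]
and the same line of reasoning applied to the admissible $G$-set $\tors \bar{H}^j(X;\phi,R)$, using the restriction statement of \S\ref{par:res-card} in place of that of \S\ref{par:res-dim}, yields the corresponding identity for the $p$-adic torsion. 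Aside from the cofinality observation, no step is expected to present a real obstacle.
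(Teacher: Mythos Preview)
Your proposal is correct and follows essentially the same route as the paper: identify $\bar{H}^j(Y;\phi_{|\Gamma_K},R)$ with $\Res_K^G \bar{H}^j(X;\phi,R)$ and then apply the restriction statements of \S\ref{par:res-dim}, Definition~\ref{def:p-adic-dim}, and \S\ref{par:res-card}. The only difference is that the paper simply asserts the isomorphism $\bar{H}^j(Y;\phi_{|\Gamma_K},k) \cong \bar{H}^j(X;\phi,k)_{|K}$ in one line, whereas you spell out the cofinality argument that justifies it.
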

    \begin{proof}
     We observe that
     $$ \bar{H}^j(Y;\phi_{|\Gamma_K},k) \cong \bar{H}^j(X;\phi,k)_{|K}.$$
     and provided that these representations are admissible we deduce
     from \S\ref{par:res-dim} and Definition~\ref{def:p-adic-dim} that
     $$b_j^{[p]}(Y;\phi_{|\Gamma_K},k) = \pdim_k^K \Res_K^G \bar{H}^j(X;\phi,k) = \pdim_k^G \bar{H}^j(X;\phi,k).$$
     This proves the claim on Betti numbers since
     $\bar{H}^j(X;\phi,k)$ is admissible, exactly if its restriction to $K$ is admissible.
     It follows from \S\ref{par:res-card} that essentially the same argument
     applies in the case of $p$-adic torsion.
   \end{proof}

   \begin{example}\label{ex:finite-group-completion}
    We consider a topological space  $X$ with $\Gamma = \pi_1(X)$ and a subset $A \subseteq X$.
    Let $G$ be a finite group and let $\phi\colon \Gamma \to G$ be a surjective homomorphism.
    Then $(\phi,G)$ is a virtual pro-$p$ completion for every prime~$p$.  The kernel
    $\Gamma_1 = \ker(\phi)$ is a finite index subgroup of $\Gamma$.
    Recall that $A_1$ denotes the inverse image of $A$ in $\widetilde{X}/\Gamma_1$. In this case
    $\bar{H}^j(X,A;\phi,k) = H^j(\widetilde{X}/\Gamma_1,A_1;k)$ is the $j$-th relative cohomology of the finite
    sheeted covering of $X$ corresponding to $\Gamma_1$. It is equipped with the usual action
    of $\Gamma/\Gamma_1 \cong G$. Hence, the $j$-th relative $p$-adic Betti number with respect to
    $\phi$ is
    $$ b_j^{[p]}(X,A;\phi,k) = b_j(\widetilde{X}/\Gamma_1,A_1;k).$$
    Moreover, for every commutative ring $R$ there is a canonical isomorphism $\bar{H}^j(X,A;\phi,R) \cong H^j(\widetilde{X}/\Gamma_1,A_1;R)$
    and thus
    $$ t^{[p]}_j(X,A;\phi,R) = |\tors H^j(\widetilde{X}/\Gamma_1,A_1;R)|.$$
  \end{example}

  \begin{theorem}\label{thm:approximation}
    Let $X$ be a connected, locally path-connected and semilocally simply-connected
    topological space and let $A \subseteq X$ be a subset.  Let $k$ be either $\bbQ$ or
    a finite field of characteristic $\ell \neq p$ and let $R$ be a commutative ring in which
    $p$ is invertible.
    
    Let $(G,\phi)$ be a virtual pro-$p$ completion of $\Gamma = \pi_1(X)$.
    Let $(N_n)_{n \in \bbN}$ be an exhaustive chain
    in $G$. The following identities hold in $\bbZ_p \cup \{\infty\}$
    \begin{align*}
      \text{(i)} &\quad \lim_{n \to \infty} b^{[p]}_j(X,A;f_n \circ \phi, k)
                   = b_j^{[p]}(X,A;\phi,k),\\
      \text{(ii)} &\quad \lim_{n \to \infty} t_j^{[p]}(X,A;f_n\circ \phi, R)
                   = t_j^{[p]}(X,A;\phi,R),
    \end{align*}
    for all $j\in\bbN_0$ where $f_n \colon G \to G/N_n$ is the factor homomorphism.
    In particular, if all $N_n$ are open subgroups, then
    \begin{align*}
      \text{(i')} &\quad \lim_{n \to \infty} b_j(\widetilde{X}/\Gamma_{N_n},A_{N_n};k) = b_j^{[p]}(X,A;\phi,k), \\
      \text{(ii')}&\quad  \lim_{n \to \infty} |\tors H^j(\widetilde{X}/\Gamma_{N_n},A_{N_n};R)|  = t_j^{[p]}(X,A;\phi,R).
    \end{align*}
    
  \end{theorem}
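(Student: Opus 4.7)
The plan is to identify the approximating object $\bar{H}^j(X,A;f_n\circ\phi,R)$ with the $N_n$-invariants of $\bar{H}^j(X,A;\phi,R)$ and then invoke the approximation lemmas for $p$-adic dimension and $p$-adic cardinality. First, since $G$ is virtually pro-$p$, I would fix an open normal pro-$p$ subgroup $H \normal_o G$. Because $\bigcap_n N_n = \{1\}$ and the sets $N_n \setminus H$ form a nested sequence of compact subsets of $G$ with empty intersection, one of them, and hence eventually all, must be empty, so $N_n \subseteq H$ for all large $n$. Dropping finitely many indices (which does not affect the limit), we may assume every $N_n$ is a closed normal pro-$p$ subgroup of $G$.

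With this in place, the relative version of Lemma \ref{lem:exact-invariants}(b) yields a canonical $G/N_n$-equivariant isomorphism
$$\bar{H}^j(X,A;f_n\circ\phi,R) \;\cong\; \bar{H}^j(X,A;\phi,R)^{N_n}$$
for every commutative ring $R$ in which $p$ is invertible. The proof of Lemma \ref{lem:exact-invariants} adapts verbatim to pairs, since the transfer argument of \cite[Prop.~3G.1]{Hatcher} is stated for finite-sheeted covers and applies equally well to the associated pair of covers. Verifying this extension is the one real piece of bookkeeping, but it is routine.

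For statement (i), take $R = k$ and assume first that $\bar{H}^j(X,A;\phi,k)$ is admissible as a smooth $G$-representation. Then
$$b_j^{[p]}(X,A;f_n\circ\phi,k) \;=\; \pdim_k^{G/N_n}\bar{H}^j(X,A;\phi,k)^{N_n},$$
and Lemma \ref{lem:approximation-dim} delivers the convergence to $\pdim_k^G\bar{H}^j(X,A;\phi,k) = b_j^{[p]}(X,A;\phi,k)$. If instead $\bar{H}^j(X,A;\phi,k)$ fails to be admissible, some open subgroup $K \leq_o G$ gives an infinite-dimensional fixed-point space; since eventually $N_n \subseteq K$, the subgroup $K/N_n$ is open in $G/N_n$ and its fixed-point space on the approximating module is precisely the same infinite-dimensional $\bar{H}^j(X,A;\phi,k)^K$. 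Thus $b_j^{[p]}(X,A;f_n\circ\phi,k) = \infty$ for all large $n$, and both sides equal $\infty$. Statement (ii) is proved by the parallel argument using the elementary identity $\tors(M^{N_n}) = (\tors M)^{N_n}$ (torsion is intrinsic to the submodule $M^{N_n}$) together with Lemma \ref{lem:approximation-card} in place of Lemma \ref{lem:approximation-dim}.

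Statements (i') and (ii') then follow from (i) and (ii) combined with Example \ref{ex:finite-group-completion}: when $N_n$ is open in $G$, the quotient $G/N_n$ is finite and $f_n\circ\phi$ factors through $\Gamma/\Gamma_{N_n}$, so the $p$-adic Betti number and $p$-adic torsion with respect to $f_n\circ\phi$ are simply the ordinary Betti number and torsion cardinality of the finite sheeted cover $(\widetilde{X}/\Gamma_{N_n},A_{N_n})$. The main obstacle is the careful verification of the relative analogue of Lemma \ref{lem:exact-invariants}, and (in the non-admissible case) the bookkeeping needed to confirm that non-admissibility propagates from the limit to the approximants.
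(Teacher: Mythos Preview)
Your proof is correct and follows essentially the same approach as the paper: reduce to $N_n$ pro-$p$, identify $\bar{H}^j(X,A;f_n\circ\phi,R)$ with the $N_n$-invariants via Lemma~\ref{lem:exact-invariants}(b), then invoke Lemmas~\ref{lem:approximation-dim} and~\ref{lem:approximation-card} in the admissible case and handle the non-admissible case by chasing an open subgroup with infinite-dimensional invariants. Two minor remarks: Lemma~\ref{lem:exact-invariants} is already stated for pairs $(X,A)$, so no adaptation is needed; and your case split (limit not admissible $\Rightarrow$ approximants not admissible) is the contrapositive of the paper's (some approximant not admissible $\Rightarrow$ limit and later approximants not admissible), but the two are equivalent.
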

  \begin{proof}
    The group $N_n$ is a pro-$p$ group for all sufficiently large $n \in \bbN$;
    this follows from the argument given in
    the proof of Lemma \ref{lem:approximation-dim}.
    Therefore, by Lemma \ref{lem:exact-invariants} (b), there is a canonical
    isomorphism
    $\bar{H}^j(X,A;f_n\circ\phi,R) \cong \bar{H}^j(X,A;\phi,R)^{N_n}$.
    Thus, if the representations $\bar{H}^j(X,A;f_n\circ\phi,k)$ (respectively the torsion submodules
    $\tors \bar{H}^j(X,A;\phi,R)$) are admissible for all large $n \in \bbN$,
    then the theorem is a direct consequence of Lemma \ref{lem:approximation-dim}
    (respectively Lemma \ref{lem:approximation-card}).

    Suppose now that there is some $e \in \bbN$ such that $N_e$ is a pro-$p$ group and
    $\bar{H}^j(X,A;f_e \circ \phi,k)$ is not admissible. In this case $\bar{H}^j(X,A;\phi,k)$ and
    $\bar{H}^j(X,A;f_{n}\circ\phi,k)$ are not admissible for all $n \geq e$.
    Indeed, there is an open subgroup
    $N_e \subseteq K \leq_o G$ such that $\bar{H}^j(X,A;f_e \circ \phi,k)^{K/N_e}$ is infinite
    dimensional.  Without loss of generality, we may assume that $K$ is pro-$p$ and by
    Lemma~\ref{lem:exact-invariants} we obtain
    $$ \bar{H}^j(X,A;f_e\circ\phi,k)^{K/N_e} \cong \bar{H}^j(X,A;\phi,k)^K  \cong \bar{H}^j(X,A; f_{n}\circ\phi,k)^{K/N_{n}} $$
    for all $n \geq e$.
    We conclude that $\bar{H}^j(X,A;\phi,k)$ and $\bar{H}^j(X,A;f_{n}\circ\phi,k)$ are not admissible.
    The same argument applies to the torsion part for cohomology with coefficients in $R$.

    Assertions (i') and (ii') follow from the discussion in Example~\ref{ex:finite-group-completion}.
  \end{proof}

  \subsection{K\"unneth formula}\label{par:products}
  Let $X_1, X_2$ be two connected, locally path-connected and semilocally simply-connected topological spaces
  and let $\Gamma_i = \pi_1(X_i)$ denote the
  corresponding fundamental group.
  Given virtual pro-$p$ completions $\phi_i\colon \Gamma_i \to G_i$ for $i \in \{1,2\}$, then
  the direct product
  $$\phi_1 \times \phi_2\colon \Gamma_1 \times \Gamma_2 \to G_1 \times G_2$$
  is a virtual pro-$p$ completion of $\Gamma_1 \times \Gamma_2$.
  \begin{proposition*}
    Let $k$ be either $\bbQ$ or a finite field of characteristic $\ell \neq p$.
    Assume that $b_j(X_i;\phi_i,k) < \infty$ for all $0 \leq j \leq n $ and $i \in \{1,2\}$.
    Then the following K\"unneth formula holds
    $$ b_n^{[p]}(X_1\times X_2;\phi_1\times\phi_2,k) = \sum_{i+j=n} b_i^{[p]}(X_1;\phi_1,k) b_j^{[p]}(X_2;\phi_2,k).$$
  \end{proposition*}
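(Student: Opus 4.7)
The plan is to reduce the statement to the classical Künneth formula applied along a joint exhaustive chain in $G_1 \times G_2$ and then pass to the $p$-adic limit provided by Theorem \ref{thm:approximation}.

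First I would choose exhaustive chains $(N_n^{(i)})_{n \in \bbN}$ of open normal subgroups in $G_i$ for $i = 1,2$; these exist because the $G_i$ are finitely generated virtually pro-$p$ groups. The sequence $M_n = N_n^{(1)} \times N_n^{(2)}$ is then an exhaustive chain of open normal subgroups in $G_1 \times G_2$. The key observation is that the preimage $(\phi_1 \times \phi_2)^{-1}(M_n)$ equals $\phi_1^{-1}(N_n^{(1)}) \times \phi_2^{-1}(N_n^{(2)})$, so the associated covering space of $X_1 \times X_2$ is naturally homeomorphic to $Y_1^{(n)} \times Y_2^{(n)}$, where $Y_i^{(n)} = \widetilde{X_i}/\phi_i^{-1}(N_n^{(i)})$.

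Next I would apply the classical Künneth formula over the field $k$ to obtain
\[ b_n(Y_1^{(n)} \times Y_2^{(n)}; k) = \sum_{i+j=n} b_i(Y_1^{(n)}; k) \, b_j(Y_2^{(n)}; k). \]
By Theorem \ref{thm:approximation} (i'), applied to each chain, the factor sequences converge in $\bbZ_p$: for every $0 \leq j \leq n$ one has $\lim_{n\to\infty} b_j(Y_i^{(n)}; k) = b_j^{[p]}(X_i; \phi_i, k)$, and by hypothesis these limits are in $\bbZ_p$ (not $\infty$). Since addition and multiplication are continuous on $\bbZ_p$ and the sum is finite, taking $p$-adic limits on both sides yields
\[ \lim_{n \to \infty} b_n(Y_1^{(n)} \times Y_2^{(n)}; k) = \sum_{i+j=n} b_i^{[p]}(X_1;\phi_1,k) \, b_j^{[p]}(X_2;\phi_2,k). \]
The left-hand side is $b_n^{[p]}(X_1 \times X_2; \phi_1 \times \phi_2, k)$ by another application of Theorem \ref{thm:approximation} (i'), which gives the formula.

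The only mildly subtle point is verifying that $b_n^{[p]}(X_1 \times X_2; \phi_1\times\phi_2, k)$ is itself finite so that the Approximation Theorem produces a genuine $\bbZ_p$-valued limit on the left; this is automatic from the finiteness of the right-hand sum, since the sequence on the left is eventually equal to a sum of products of convergent $\bbZ_p$-valued sequences and is therefore bounded away from $\infty$. One technical detail to handle carefully is that the approximation theorem is stated for $(N_n)$ an exhaustive chain in $G_1 \times G_2$, and I should verify that the specific product chain $(M_n)$ is cofinal among all such chains in the relevant sense; this is immediate because any open normal subgroup of $G_1 \times G_2$ contains some $M_n$, so the limit computed along $(M_n)$ agrees with the canonical limit produced by Theorem \ref{thm:approximation}. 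I do not anticipate any substantial obstacle beyond bookkeeping.
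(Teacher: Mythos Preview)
Your argument is correct and follows exactly the route taken in the paper: pick product chains $M_n = N_n^{(1)}\times N_n^{(2)}$, apply the ordinary K\"unneth formula on each level, and pass to the $p$-adic limit via Theorem~\ref{thm:approximation}. The cofinality discussion at the end is unnecessary---the Approximation Theorem already asserts that the limit along \emph{any} exhaustive chain of open normal subgroups equals the intrinsically defined $p$-adic Betti number, so once you have checked that $(M_n)$ is itself an exhaustive chain there is nothing further to verify.
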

  \begin{proof}
    We fix an exhaustive chain $(N_n)_{n\in \bbN}$ of the form $N_n = N_n^{(1)} \times N_n^{(2)}$ where
    $(N_n^{(i)})_{n\in\bbN}$ is an exhaustive chain in $G_i$.
    We apply the Approximation Theorem \ref{thm:approximation} with the chain $(N_n)_{n\in\bbN}$. Since all involved
    Betti numbers are finite, the assertion follows from the K\"unneth formula for
    ordinary Betti numbers.
  \end{proof}

  \begin{proposition}[Poincar\'e duality]\label{prop:poincare-duality}
  Let $M$ be a compact, connected, orientable $n$-manifold with boundary $\partial M$.
  Decompose the boundary $\partial M = A \sqcup B$ as a disjoint union of two closed
  $(n-1)$-manifolds (where $A$ and $B$ may be empty).
  Let $(\phi,G)$ be a virtual pro-$p$ completion of $\Gamma = \pi_1(M)$. For all $j \in \{0,\dots,n\}$ the equality
  $$ b^{[p]}_j(M,A;\phi,k) = b^{[p]}_{n-j}(M,B;\phi,k).$$
  holds. Moreover, for every principle ideal domain $R$ in which $p$ is invertible,
  we have
  $$ t_j^{[p]}(M,A;\phi,R) = t_{n-j+1}^{[p]}(M,B;\phi,R).$$
  \end{proposition}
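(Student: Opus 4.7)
The plan is to reduce both statements to classical Poincaré--Lefschetz duality on the finite sheeted covers appearing in the Approximation Theorem. First I would fix an exhaustive chain $(N_n)_{n \in \bbN}$ of open normal subgroups in $G$ (which exists because $G$ is finitely generated virtually pro-$p$), set $\Gamma_n = \phi^{-1}(N_n)$, and write $M_n = \widetilde{M}/\Gamma_n$. Each $M_n \to M$ is a finite sheeted cover, so $M_n$ is again a compact orientable $n$-manifold (orientability lifts to covers) with boundary decomposed as $\partial M_n = A_{N_n} \sqcup B_{N_n}$.

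Next I would invoke classical Poincaré--Lefschetz duality on each $M_n$:
\[
H^j(M_n, A_{N_n}; R) \cong H_{n-j}(M_n, B_{N_n}; R),
\]
valid for any coefficient ring $R$. For the Betti number identity, specialising $R = k$ and using that the universal coefficient theorem is trivial over a field yields $\dim_k H_{n-j}(M_n, B_{N_n}; k) = \dim_k H^{n-j}(M_n, B_{N_n}; k)$; hence $b_j(M_n, A_{N_n}; k) = b_{n-j}(M_n, B_{N_n}; k)$, and the claim follows by passing to the limit via Theorem~\ref{thm:approximation}(i').

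For the torsion identity I would use the universal coefficient theorem over a PID: for every space $X$ with finitely generated homology there is a (non-canonical) splitting $H^j(X; R) \cong \Hom_R(H_j(X; R), R) \oplus \tors H_{j-1}(X; R)$ whose first summand is torsion-free, so $|\tors H^j(X; R)| = |\tors H_{j-1}(X; R)|$. Combined with Poincaré--Lefschetz on $M_n$, this gives
\[
\tors H^j(M_n, A_{N_n}; R) \cong \tors H_{n-j}(M_n, B_{N_n}; R) \cong \tors H^{n-j+1}(M_n, B_{N_n}; R),
\]
and Theorem~\ref{thm:approximation}(ii') converts the equality of cardinalities into the desired equality of $p$-adic torsions.

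Since $M$ is compact it carries a finite CW structure, so Remark~(c) above guarantees that all $p$-adic invariants involved are finite and that the admissibility hypotheses needed for both parts of the Approximation Theorem are automatically satisfied. The main ingredients---Poincaré--Lefschetz duality for compact manifolds with boundary, the universal coefficient theorem over a PID, and the Approximation Theorem---are all in place, so no substantive obstacle is expected; the only care required is to keep track of the index shift by one in the torsion case, which comes from passing between homology and cohomology via $\Ext^1_R$.
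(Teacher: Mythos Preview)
Your proposal is correct and follows essentially the same approach as the paper: apply classical Poincar\'e--Lefschetz duality to the finite covers $\widetilde{M}/\Gamma_N$ and pass to the limit via the Approximation Theorem, using the universal coefficient theorem over a PID (together with the structure theorem for finitely generated modules) to identify $\tors H^{j}$ with $\tors H_{j-1}$ and thereby handle the index shift in the torsion case. The paper's proof is terser but makes exactly the same moves.
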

  \begin{proof}
    This is a consequence of the Approximation Theorem and
    the Poincar\'e duality isomorphism
    $$ H^j(\widetilde{M}/\Gamma_N,A_N;R) \cong H_{n-j}(\widetilde{M}/\Gamma_N,B_N;R)$$
    for the finite sheeted coverings $\widetilde{M}/\Gamma_N$ of $M$ (where $N\normal_o G$); see \cite[Thm.~3.43]{Hatcher}.

    For the $p$-adic torsion one needs to verify in addition that there is an isomorphism
    $\tors H_{j-1}(\widetilde{M}/\Gamma_N,B_N;R) \cong \tors H^{j}(\widetilde{M}/\Gamma_N,B_N;R)$ for all $j\in\bbN$.
    Indeed, observe that the universal coefficient theorem yields an isomorphism
    $$ \tors H^{j}(\widetilde{M}/\Gamma_N,B_N;R) \cong \Ext^1_R(H_{j-1}(\widetilde{M}/\Gamma_N,B_N;R),R).$$
    The homology of a compact manifold is finitely generated (see \cite[Cor.~A.9]{Hatcher})
    and since $R$ is a principle ideal domain we deduce
    $$H_{j-1}(\widetilde{M}/\Gamma_N,B_N;R) \cong R^d \oplus R/a_1R \oplus \dots R/a_tR$$
    for certain $a_1,\dots,a_t \in R \setminus\{0\}$.
    The claim follows since $\Ext_R^1(R/a_iR,R) \cong R/a_iR$ and $\Ext_R^1(R,R) = 0$.
  \end{proof}

  \subsection{The $p$-adic Euler characteristic}\label{par:eulerchar}
  Suppose that $X$ is a finite CW-complex; in particular,
  its Euler characteristic $\chi(X)$ is well-defined.
  Since all of its $p$-adic Betti numbers are finite, we may also define its $p$-adic Euler characteristic
  $$\chi^{[p]}(X;\phi) = \sum^{\dim(X)}_{j = 0} (-1)^j b_j^{[p]}(X;\phi,k)$$
  where $k$ is either $\bbQ$ or a finite field of characteristic $\ell\neq p$.
  A priori this definition depends on the chosen field $k$; however,
  as for the ordinary Euler characteristic, we will see that the choice of $k$ is inessential.
  
  Let $(N_n)_{n\in\bbN}$ be an exhaustive chain of open subgroups $N_n \normal_o G$ (which exists,
  since $G$ is finitely generated).
  From Theorem \ref{thm:approximation} we deduce that
  $$ \chi^{[p]}(X;\phi) = \lim_{n \to \infty} \chi(\widetilde{X}/\Gamma_{N_n}) = \lim_{n\to \infty} |G:N_n| \chi(X) \in \bbZ_p.$$
  The following result is an immediate consequence of equation \eqref{eq:index-limit}.
  \begin{proposition*}
    Let $X$ be a connected finite CW-complex and let $(\phi,G)$ be a virtual pro-$p$ completion.
    Then
    $$\chi^{[p]}(X;\phi) = \left\Vert G \right\Vert  \chi(X)$$
    with the notation introduced in \S\ref{par:index-convention}.
  \end{proposition*}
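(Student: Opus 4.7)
The plan is essentially to read off the statement from the display immediately preceding it, together with equation \eqref{eq:index-limit}. Since $G$ is finitely generated and virtually pro-$p$, it admits an exhaustive chain $(N_n)_{n \in \bbN}$ of \emph{open} normal subgroups. Fix such a chain and fix any admissible coefficient field $k$ (either $\bbQ$ or a finite field of characteristic $\ell \neq p$). Because $X$ is a finite CW-complex, the sum defining $\chi^{[p]}(X;\phi)$ has only finitely many nonzero terms, so the Approximation Theorem \ref{thm:approximation} can be applied term by term and the limit can be pulled through the alternating sum. This yields
\[
\chi^{[p]}(X;\phi) = \sum_{j=0}^{\dim(X)} (-1)^j \lim_{n \to \infty} b_j(\widetilde{X}/\Gamma_{N_n};k)
= \lim_{n \to \infty} \chi(\widetilde{X}/\Gamma_{N_n}),
\]
where in the last step I use that for each fixed $n$ the finite cover $\widetilde{X}/\Gamma_{N_n}$ is itself a finite CW-complex, so its ordinary Euler characteristic is the alternating sum of its ordinary Betti numbers (over any field).

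Multiplicativity of the Euler characteristic on finite coverings then gives $\chi(\widetilde{X}/\Gamma_{N_n}) = |\Gamma:\Gamma_{N_n}|\,\chi(X) = |G:N_n|\,\chi(X)$ for every $n$, where the second equality uses that $\phi$ has dense image and $N_n$ is open, so $\Gamma/\Gamma_{N_n} \cong G/N_n$. Plugging this into the previous display reproduces the formula $\chi^{[p]}(X;\phi) = \lim_{n\to\infty} |G:N_n|\,\chi(X)$ recorded in the paragraph preceding the proposition.

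To finish, I apply equation \eqref{eq:index-limit} to the closed subgroup $K = \{1\}$, which gives $\lim_{n\to\infty} |G:N_n| = \Vert G \Vert$ in $\bbZ_p$. Since scalar multiplication by the integer $\chi(X) \in \bbZ \subseteq \bbZ_p$ is continuous, the limit passes through to yield $\chi^{[p]}(X;\phi) = \Vert G \Vert \chi(X)$. As a free bonus, the right-hand side is manifestly independent of $k$, which confirms the earlier claim that the definition of $\chi^{[p]}(X;\phi)$ does not depend on the chosen field of coefficients.

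There is no real obstacle: the proposition is a direct corollary of the Approximation Theorem combined with the elementary $p$-adic limit identity \eqref{eq:index-limit}. The only minor points requiring attention are the existence of an exhaustive chain of \emph{open} normal subgroups (granted by finite generation of $G$) and the legality of commuting the finite alternating sum with the limit in $\bbZ_p$, both of which are immediate.
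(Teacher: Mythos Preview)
Your proof is correct and follows essentially the same route as the paper: the paper records the identity $\chi^{[p]}(X;\phi) = \lim_{n\to\infty} |G:N_n|\,\chi(X)$ via the Approximation Theorem and multiplicativity of the Euler characteristic, and then states that the proposition is an immediate consequence of equation~\eqref{eq:index-limit}. You have simply spelled out the details that the paper leaves implicit.
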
    

  \subsection{Wedge sums}\label{par:wedge-sums}
  Let $X_1, X_2$ be two connected CW-complexes with \emph{finitely generated fundamental groups}.
  For instance, this assumption is satisfied if $X_1$ and $X_2$  have a finite $1$-skeleton.
  Let $Y = X_1 \vee X_2$ be the wedge sum subject to chosen base
  points $x_i \in X_i$. The new base point will be denoted $y_0 \in Y$.
  
  By the theorem of Seifert-van Kampen the fundamental group $\Gamma = \pi_1(Y,y_0)$ is
  isomorphic to the free product $\Gamma_1*\Gamma_2$ of $\Gamma_1 = \pi_1(X_1,x_1)$ and
  $\Gamma_2 = \pi_1(X_2,x_2)$.  Let $(\phi,G)$ be a virtual pro-$p$ completion of
  $\Gamma = \Gamma_1* \Gamma_2$.  We denote by $K_i$ the closure of $\phi(\Gamma_i)$ in $G$.
  Note that $(\phi_{|\Gamma_i},K_i)$ is a virtual pro-$p$ completion since
  $\Gamma_i$ is finitely generated by assumption.
  
  \begin{proposition*}
    Let $X_1, X_2$ be two connected CW-complexes with finitely generated fundamental groups.
    Let $R$ be a commutative ring.
    There is an exact sequence of $R$-modules with $G$-action
    \begin{equation*}
      0 \longrightarrow R \longrightarrow C^\infty(G,R) \longrightarrow I(X_1,X_2)
        \longrightarrow \bar{H}^1(X_1\vee X_2;\phi,R) \longrightarrow 0
    \end{equation*}
    where $$ I(X_1,X_2) =  \Ind_{K_1}^G \bar{H}^1(X_1,\{x_1\};\phi_{|\Gamma_1},R) \oplus \Ind_{K_2}^G \bar{H}^1(X_2,\{x_2\};\phi_{|\Gamma_2},R).$$
    Moreover, for every $j \geq 2$ there is a isomorphism
        $$ \bar{H}^j(X_1\vee X_2; \phi, R) \cong \Ind_{K_1}^G \bar{H}^j(X_1;\phi_{|\Gamma_1},R) \oplus \Ind_{K_2}^G \bar{H}^j(X_2;\phi_{|\Gamma_2},R).$$
        In particular, if $R=k$ is a field which is either $\bbQ$ or a finite field of
        characteristic $\ell \neq p$, then the following equalities of $p$-adic Betti numbers
        hold whenever all involved Betti numbers are finite:
\begin{align*}
  b_1^{[p]}(X_1 \vee X_2;\phi,k) = 1& + \left\Vert G \right\Vert - \left\Vert G:K_1\right\Vert - \left\Vert G:K_2 \right\Vert \\ &+ \left\Vert G:K_1\right\Vert b_1^{[p]}(X_1;\phi_{|\Gamma_1},k) +
                                                                    \left\Vert G:K_2 \right\Vert b_1^{[p]}(X_2;\phi_{|\Gamma_2},k)\\
  b_j^{[p]}(X_1\vee X_2;\phi,k) =&  \left\Vert G:K_1\right\Vert b_j^{[p]}(X_1;\phi_{|\Gamma_1},k) +
  \left\Vert G:K_2 \right\Vert b_j^{[p]}(X_2;\phi_{|\Gamma_2},k)
  \end{align*}
  for all $j \geq 2$.
  Here we use the notation explained in \S\ref{par:index-convention}.
  \end{proposition*}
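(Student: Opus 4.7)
The plan is to apply the long exact sequence of the pair $(Y, \{y_0\})$ in the direct limit defining $\bar{H}^*$. Since the preimage of $y_0$ in $\widetilde{Y}/\Gamma_N$ is a discrete set of $|\Gamma:\Gamma_N|$ points, a glance at the defining direct limits yields $\bar{H}^0(\{y_0\};\phi,R) \cong C^\infty(G,R)$ and $\bar{H}^j(\{y_0\};\phi,R) = 0$ for $j \geq 1$. Combined with $\bar{H}^0(Y;\phi,R) \cong R$, the long exact sequence then reduces to
\[ 0 \to R \to C^\infty(G,R) \to \bar{H}^1(Y, \{y_0\};\phi,R) \to \bar{H}^1(Y;\phi,R) \to 0 \]
and canonical isomorphisms $\bar{H}^j(Y,\{y_0\};\phi,R) \cong \bar{H}^j(Y;\phi,R)$ for $j \geq 2$. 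The statement is thereby reduced to proving
\[ \bar{H}^j(Y,\{y_0\};\phi,R) \cong \bigoplus_{i=1}^2 \Ind_{K_i}^G \bar{H}^j(X_i,\{x_i\};\phi_{|\Gamma_i},R) \]
for $j \geq 1$; for $j \geq 2$, a second application of the long exact sequence of $(X_i,x_i)$ (using $\bar{H}^{j-1}(\{x_i\};\phi_{|\Gamma_i},R) = 0$) identifies $\bar{H}^j(X_i,\{x_i\};\phi_{|\Gamma_i},R)$ with $\bar{H}^j(X_i;\phi_{|\Gamma_i},R)$.

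The heart of the proof is the decomposition above, which I would first establish at each finite level. Fix an open normal subgroup $N \normal_o G$, put $Y_N := \widetilde{Y}/\Gamma_N$, and let $F_N$ denote the preimage of $y_0$ in $Y_N$. Using the free product decomposition $\Gamma = \Gamma_1 * \Gamma_2$, the preimage of $X_i$ in $\widetilde{Y}$ is the disjoint union $\bigsqcup_{\gamma\Gamma_i} \gamma\widetilde{X}_i$. Passing to the quotient by the normal subgroup $\Gamma_N$, this becomes a disjoint union of $|\Gamma:\Gamma_N\Gamma_i|$ copies of the connected finite cover $V_i^{(N)} := \widetilde{X}_i/(\Gamma_i \cap \Gamma_N)$, and $Y_N$ is obtained by gluing these copies at the points of $F_N$ along their lifts of $x_1$ and $x_2$. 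Collapsing $F_N$ in $Y_N$ therefore produces a wedge sum, and excision together with the wedge formula for reduced cohomology give
\[ H^j(Y_N, F_N; R) \cong \bigoplus_{i=1}^2 \bigoplus_{\Gamma/\Gamma_N\Gamma_i} H^j\bigl(V_i^{(N)}, F_{V_i}^{(N)}; R\bigr), \]
where $F_{V_i}^{(N)} \subseteq V_i^{(N)}$ is the preimage of $x_i$. The deck group $G/N \cong \Gamma/\Gamma_N$ acts transitively on the type-$i$ vertex set $\Gamma/\Gamma_N\Gamma_i$ with stabilizer $K_iN/N$, so this is $G/N$-equivariantly isomorphic to $\bigoplus_i \Ind_{K_iN/N}^{G/N} H^j(V_i^{(N)}, F_{V_i}^{(N)}; R)$. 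Passing to the direct limit over open normal subgroups of $G$ (noting that the system $\{K_i \cap N\}_{N \normal_o G}$ is cofinal among the open normal subgroups of $K_i$) and invoking Lemma \ref{par:induction}~(c) to commute smooth induction with filtered direct limits delivers the claimed decomposition.

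For the $p$-adic Betti numbers, I would apply $\pdim_k^G$ to the four-term exact sequence and to the isomorphism for $j \geq 2$, assuming all involved Betti numbers are finite. Since $\chr(k) \neq p$, Maschke's theorem applied on each finite continuous quotient of an open pro-$p$ subgroup shows that short exact sequences of smooth admissible $k$-representations split upon restriction, hence $\pdim_k^G$ is additive on short exact sequences. Combining this with $\pdim_k^G C^\infty(G,k) = \Vert G \Vert$, Proposition \ref{prop:induction-dim} ($\pdim_k^G \Ind_{K_i}^G W = \Vert G:K_i \Vert \pdim_k^{K_i} W$), the multiplicativity $\Vert G:K_i\Vert \cdot \Vert K_i\Vert = \Vert G \Vert$, and the computation
\[ \pdim_k^{K_i} \bar{H}^1(X_i,\{x_i\};\phi_{|\Gamma_i},k) = \Vert K_i\Vert - 1 + b_1^{[p]}(X_i;\phi_{|\Gamma_i},k) \]
obtained from the long exact sequence of the pair $(X_i,x_i)$, a direct alternating-sum computation yields both displayed identities. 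The main obstacle is the geometric step: the combinatorial description of the finite cover $Y_N$ via the free product decomposition and the resulting wedge presentation of $Y_N/F_N$; once these are in place, the $G/N$-equivariance and the passage to the direct limit are essentially formal.
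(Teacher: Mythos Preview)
Your proof is correct and follows essentially the same strategy as the paper: establish the decomposition at each finite level $N \normal_o G$ by analyzing the cover $Y_N$ as a graph of copies of $V_i^{(N)}$ glued along the fibre $F_N$, identify the relative cohomology with an induced module, and pass to the direct limit using Lemma~\ref{par:induction}(c). The paper phrases this as ``reduce to $G$ finite'' and carries out the excision step with explicit small contractible neighbourhoods rather than your ``collapse $F_N$ to get a wedge'' picture, but the content is the same. The only substantive difference is in deriving the Betti number identities: the paper obtains them by applying the Approximation Theorem~\ref{thm:approximation} to the finite-level formula, whereas you work directly with $\pdim_k^G$ using its additivity on short exact sequences (via Maschke) together with Proposition~\ref{prop:induction-dim}; both routes are short once the exact sequence is in hand.
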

  \begin{proof}
    We reduce to the case where $G$ is a finite group.  Indeed, taking direct limits is
    exact and induced representations of profinite groups are defined the direct limit
    of induced representations of its finite quotients; see
    \S\ref{par:induction}. Similarly, the formula for Betti numbers will follow
    from the Approximation Theorem~\ref{thm:approximation} and equation
    \eqref{eq:index-limit}.

    Let $q \colon Z \to Y = X_1 \vee X_2$ be a finite connected $G$-covering.  The restriction
    of $q$ to the inverse image $q^{-1}(X_i) = Z_i$ is a finite $G$-covering of $X_i$
    which, however, is not necessarily connected.  Let $Z_i^0$ be a connected component and
    let $K_i \leq G$ be its setwise stabilizer.  We observe that
  $$Z_i = \bigsqcup_{g \in G/K_i} gZ_i^0$$
  and deduce that
  $H^j(Z_i,q_{|Z_i}^{-1}(x_i);R) \cong \Ind_{K_i}^G H^j(Z_i^0,q_{|Z_i^0}^{-1}(x_i);R)$.  In
  fact, an isomorphism is given by the map
  $\alpha \mapsto f_\alpha$ with $f_\alpha(g) = \iota^* g^*(\alpha)$, for all $g \in G$ where
  $\iota \colon Z_i^0 \to Z_i$ denotes the inclusion.

  There are arbitrarily small
  open neighborhoods $U_i$ of $x_i$ in $X_i$ such that $x_i$ is a strong deformation retract
  of $U_i$; see the proof of Prop.~A.4 in \cite{Hatcher}.  Now the open subset $V = U_1 \vee U_2 \subseteq Y$ is contractible and we may
  choose it so small that it is regularly covered, i.e.\
  $q^{-1}(V) \cong \bigsqcup_{g\in G} gV_0$ where $V_0$ is a connected component of the fibre
  and $q_{|V_0}$ is a homeomorphism onto $V$.  It follows from the long exact sequence of the
  triple $(Z,q^{-1}(V), q^{-1}(y_0))$ that the inclusion map induces an isomorphism
  $$ H^j(Z,q^{-1}(y_0);R) \isomorph H^j(Z,q^{-1}(V);R).$$
  We apply excision twice to obtain isomorphisms
  \begin{align*}
  H^j(Z,q^{-1}(V);R) &\cong H^j(Z\setminus q^{-1}(y_0),q^{-1}(V)\setminus q^{-1}(y_0);R) \\
  &\cong H^j(Z_1, q^{-1}_{|Z_1}(U_1);R) \oplus H^j(Z_2, q^{-1}_{|Z_2}(U_2);R)
  \end{align*}
  We obtain an isomorphism
  \begin{equation}\label{eq:isomorphism-wedge-rel}
    H^j(Z,q^{-1}(y_0);R)  \isomorph H^j(Z_1, q^{-1}_{|Z_1}(x_1);R) \oplus H^j(Z_2, q^{-1}_{|Z_2}(x_2);R)
  \end{equation}
  
  for every $j$ and any commutative ring $R$.
  Since $q^{-1}_{|Z_i}(x_i)$ is just a finite discrete set of points,
  the statements for $j \geq 2$ follow readily from the long exact sequences of the pairs $(Z_i,q^{-1}(x_i))$.

  Finally, we consider the case $j=1$. The initial part of the
  long exact sequence of the pair $(Z,q^{-1}(y_0))$ reads as follows:
  $$0 \rightarrow H^0(Z;R) \longrightarrow H^0(q^{-1}(y_0);R) \longrightarrow H^1(Z,q^{-1}(y_0);R)
  \longrightarrow H^1(Z;R) \rightarrow 0.$$ Since $Z$ is connected we have $H^0(Z;R) \cong
  R$. In addition, $q^{-1}(y_0)$ is a finite discrete set of points on which $G$ acts simply
  transitively, i.e.\ $H^0(q^{-1}(y_0);R) \cong C^\infty(G;R)$.  We deduce the claimed exact
  sequence using \eqref{eq:isomorphism-wedge-rel}.

  Let $R=k$ be a field.
  The exact sequence yields
  \begin{equation}\label{eq:tmp-betti-formula}
    b_1(Z;k) = 1 - |G| + \sum_{i=1}^2 \left|G:K_i\right| b_1(Z^0_i,q^{-1}_{|Z_i^0}(x_i);k)
  \end{equation}
  Finally, we apply the long exact sequences of the pairs $(Z^0_i,q_{|Z_i^0}^{-1}(x_i))$ to see
  that
  $$ b_1(Z^0_i,q^{-1}_{|Z_i^0}(x_i);k) =  |K_i| + b_1(Z_i^0;k) -1.$$
  Substituting into formula \eqref{eq:tmp-betti-formula} yields the desired identity.
  \end{proof}

  \section{Examples}\label{sec:examples}
  Here we discuss a number of examples. Throughout $p$ denotes a prime number and $k$ a field which is either $\bbQ$ of a finite field of characteristic $\ell \neq p$.
  
   \subsection{Surfaces}\label{par:surfaces}
     Let $\Sigma_g$ be a closed oriented surface of genus $g > 0$.
    For every \emph{infinite} virtual pro-$p$ completion  $(\phi, G)$
    of $\Gamma = \pi_1(\Sigma_g)$ we have
    \begin{align*}
      b_2^{[p]}(\Sigma_g,\phi;k) &= 1,\\
      b_1^{[p]}(\Sigma_g,\phi;k) &= 2
    \end{align*}
    where the first line follows from Poincar\'e duality (Proposition\ref{prop:poincare-duality})
    and the second from the formula for
    the $p$-adic Euler characteristic \S\ref{par:eulerchar}.
    
    \subsection{Tori}\label{par:tori}
    Let $T^d$ be the $d$-dimensional torus and let $(\phi,G)$ be any virtual pro-$p$ completion
    of $\pi_1(T^d) \cong \bbZ^d$. Since every finite sheeted covering of $T^d$ is homeomorphic to $T^d$,
    the Approximation Theorem \ref{thm:approximation} implies
    $$ b_j^{[p]}(T^d,\phi;k) = \binom{d}{j} $$
    for all $j \in \{0,\dots,d\}$.

    \subsection{Free groups}\label{par:free-groups}
       Let $F_r$ be the free group of rank $r \in \bbN$ and let $(\phi,G)$ be any \emph{infinite}
       virtual pro-$p$ completion.
       Then we find that
       $$ b_1^{[p]}(F_r,\phi;k) = 1$$
       is independent of the rank $r$.  This can be deduced either from the Nielson-Schreier
       formula and the Approximation Theorem \ref{thm:approximation} or from the formula for
       the $p$-adic Euler characteristic \S\ref{par:eulerchar}.

  \subsection{Betti numbers depend on the completion (I)}\label{par:dep-completion-1}
  The $p$-adic Betti numbers depend on the virtual pro-$p$ completion $\phi$.
  This is not surprising, if one considers completions $(\phi_1,G_1)$ and $(\phi_2,G_2)$
  with $\ker(\phi_1) \neq \ker(\phi_2)$.
  
  Consider $X = S^1 \vee S^1$
  with fundamental group $\Gamma = F_2$.
  Let $\phi_1\colon\Gamma \to \widehat{F}^p_2$ be the pro-$p$ completion of $F_2$ and
  let $\phi_2\colon \Gamma \to \{1\}$ be the trivial completion.
  In this case $b^{[p]}_1(X;\phi_1,k) = 1$ (see \S\ref{par:free-groups}), whereas
  $$b^{[p]}_1(X;\phi_2,k) = b_1(X;k) = 2$$
  by Example \ref{ex:finite-group-completion}.

  \subsection{Betti numbers depend on the completion (II) }\label{par:dep-completion-2}
  In this example, we will see that the
  $p$-adic Betti numbers of a space $X$ w.r.t.\ pro-$p$ completions $\phi$ and $\psi$ can be distinct even if  $\ker(\phi) = \ker(\psi)$.

  Let $X = S^1 \times (S^1 \vee S^3)$. The space $X$ is a finite CW-complex. The fundamental
   group of $X$ is a free abelian group $\Gamma = \langle s, t \rangle \cong \bbZ^2$ of rank
   $2$, where $s$ is the class of a simple loop in the first factor and $t$ is the class of a
   simple loop in the second.

   We consider two distinct \emph{injective} virtual pro-$p$ completions of $\Gamma$.
   The first is simply the pro-$p$ completion $\phi\colon \Gamma \to \bbZ_p^2$ with $\phi(s) = (1,0)$
   and $\phi(t) = (0,1)$.
   The K\"unneth formula and the formula for wedge sums
   yield
   $$  b^{[p]}_4(X;\phi,k) = 0,$$
   see \S\ref{par:products} and \S\ref{par:wedge-sums}.
   
   For the second completion, we fix any irrational element $\omega \in \bbZ_p$
   and define $\psi = \psi_\omega \colon \Gamma \to \bbZ_p$ with $\psi(s) = \omega$ and $\psi(t) = 1$.
   Note that the irrationality of $\omega$ implies that $\psi$ is injective.
   The universal covering $\widetilde{X}$ has one $\Gamma$-orbit of $3$-cells and one $\Gamma$-orbit of $4$-cells.
   We can choose $\bbZ[\Gamma]$-bases of $C^{\text{cell}}_3(\widetilde{X})$ and
   $C^{\text{cell}}_4(\widetilde{X})$ such that 
   the boundary map $\partial_3$ is given by multiplication with $1-s \in \bbZ[\Gamma]$.

   Let $\Gamma_n = \psi^{-1}(p^n\bbZ_p)$ and let $X_n = \widetilde{X}/\Gamma_n$ be the associated finite sheeted
   covering of $X$.
   We conclude that
   $$ H_4(X_n;k) \cong k[\bbZ/p^n\bbZ]/(1-s)k[\bbZ/p^n\bbZ]$$
   where $s$ acts like addition with $\omega + p^n \bbZ_p$ on $\bbZ/p^n\bbZ$.
   A short calculation shows that
   $ b_4(X_n;k) = p^{\min(n,\nu_p(\omega))}$ and we deduce
   $$ b_4^{[p]}(X;\psi,k) = p^{\nu_p(\omega)}.$$
   It follows, in particular, that 
   $b_4^{[p]}(X;\psi,k)$ can take the value $p^m$ for every $m \geq 0$.

  \subsection{Infinite cyclic coverings}\label{par:infinite-cyclic-coverings}
  Let $X$ be a connected finite CW-complex with fundamental group $\Gamma = \pi_1(X)$.
  Let $\phi'\colon \Gamma \to \bbZ$ be an epimorphism to the infinite cyclic group. For every integer
  $m \in \bbN$ which is coprime to $p$, there is a virtual pro-$p$ completion
  $\phi_m \colon \Gamma \to \bbZ/m\bbZ \times \bbZ_p$
  obtained from $\phi'$ and the completion $\iota_m \colon \bbZ \to \bbZ/m\bbZ \times \bbZ_p$.
  In the following the profinite group $\bbZ/m\bbZ \times \bbZ_p$ is
  denoted by $G$ and the generator $\iota_m(1)$ by $t$.
  
  We consider the associated infinite cyclic covering $Y = \widetilde{X}/\ker(\phi')$ of $X$.
  The cellular chain complex $C_*(Y;k)$ of $Y$ with coefficients in $k$ is a chain complex of
  finitely generated free modules over the Laurent polynomial ring $k[t,t^{-1}]$.
  $$ C_*(Y;k)\colon \quad \cdots  C_{j+1}(Y;k) \stackrel{\partial_{j+1}}{\longrightarrow} C_{j}(Y;k) \stackrel{\partial_{j}}{\longrightarrow} C_{j-1}(Y;k) \longrightarrow \cdots$$
  We write $e_j$ for the rank of $C_j(Y;k)$.
  Since $k[t,t^{-1}]$ is a principal ideal domain, the elementary divisor theorem
  shows that we can choose bases of the chain modules $C_{j-1}$, $C_j$ and $C_{j+1}$ such that
  the boundary maps are given by right multiplication with diagonal matrices $A_j \in M_{e_j,e_{j-1}}(k[t,t^{-1}])$
  $A_{j+1} \in M_{e_{j+1},e_{j}}(k[t,t^{-1}])$
  of the form
  \begin{equation*}
    A_j = \left(\begin{array}{c | c}
                   0 & 0  \\
                  \hline
                   0 &
                             \begin{smallmatrix}
                               g_1 & & \\
                               &  \ddots & \\
                               & &  g_u
                             \end{smallmatrix}
                \end{array}\right)
     \qquad 
    A_{j+1} = \left(\begin{array}{c | c}
                  \begin{smallmatrix}
                               f_1 & & \\
                               &  \ddots & \\
                               & &  f_v
                             \end{smallmatrix} & 0\\
                      \hline
                  0 & 0
                \end{array}\right)
\end{equation*}
 with $g_u \mid g_{u-1} \mid \cdots \mid g_1$ and $f_v \mid f_{v-1} \mid \cdots \mid f_1$
 for certain non-zero $g_1, \dots, g_u, f_1,\dots,f_v \in k[t,t^{-1}]$ with $u+v \leq e_j$.
 The polynomials are uniquely determined up to units and are called the \emph{invariant factors} of $A_j$ respectively $A_{j+1}$.
  \begin{lemma*}
     In the situation described above, assume that the characteristic of $k$ does not divide $m$.
     Then the $j$-th $p$-adic Betti number w.r.t.\ $\phi_m$ is
     $$b_j^{[p]}(X;\phi_m,k) = \sum_{i=1}^v |V(f_i) \cap \mu(mp^\infty)| + \sum_{i=1}^{u}|V(g_i) \cap \mu(mp^\infty)|$$
     where $V(f_i), V(g_i) \subseteq \overline{k}^\times$ denote the vanishing sets of the Laurent polynomials
     in the algebraic closure $\overline{k}$ of $k$ and
     $$\mu(mp^\infty)=\{\zeta \in \overline{k}^\times \mid \zeta^{mp^n} = 1 \text{ for some } n \in \bbN_0\}.$$
   \end{lemma*}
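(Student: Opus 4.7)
The plan is to apply the Approximation Theorem~\ref{thm:approximation} to an exhaustive chain in $G = \bbZ/m\bbZ \times \bbZ_p$ and reduce the classical Betti number computation on the finite cyclic covers to a linear algebra calculation, using the diagonal form of the boundary maps.

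First I take the exhaustive chain $G_n = \{0\} \times p^n \bbZ_p$ of open normal subgroups of $G$; its intersection is trivial. Since $\gcd(m,p)=1$, the quotient $G/G_n$ is cyclic of order $mp^n$, and $\phi_m^{-1}(G_n) = (\phi')^{-1}(mp^n\bbZ)$, so the associated finite sheeted covering $X_n = \widetilde{X}/\phi_m^{-1}(G_n)$ is the $mp^n$-fold cyclic cover of $X$. Consequently the cellular chain complex of $X_n$ with coefficients in $k$ is obtained from $C_*(Y;k)$ by base change along $k[t,t^{-1}] \to k[t,t^{-1}]/(t^{mp^n}-1)$. By Theorem~\ref{thm:approximation} it suffices to prove
\[ \lim_{n \to \infty} b_j(X_n;k) = \sum_{i=1}^v |V(f_i)\cap \mu(mp^\infty)| + \sum_{i=1}^u |V(g_i)\cap \mu(mp^\infty)| \qquad \text{in } \bbZ_p. \]

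Next I compute $b_j(X_n;k)$. Because $\mathrm{char}(k)$ divides neither $m$ nor $p$, the polynomial $t^{mp^n}-1$ is separable over $k$, and after base change to the algebraic closure $\overline{k}$ the ring $k[t,t^{-1}]/(t^{mp^n}-1) \otimes_k \overline{k}$ splits as $\prod_{\zeta \in \mu(mp^n)} \overline{k}$. The chain complex $C_*(X_n; \overline{k})$ therefore decomposes as a direct sum of chain complexes indexed by $\zeta \in \mu(mp^n)$, whose differentials are the block diagonal matrices of the hypothesis evaluated at $t = \zeta$. The relation $A_{j+1}A_j = 0$ forces $u+v \leq e_j$ and guarantees that the nonzero rows of $A_j$ and the nonzero columns of $A_{j+1}$ occupy disjoint index sets, so the ranks of $\partial_j$ and $\partial_{j+1}$ at $\zeta$ can be read off independently. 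A direct count gives
\[ \dim \ker \partial_j(\zeta) - \dim \mathrm{im}\, \partial_{j+1}(\zeta) = (e_j - u - v) + |\{i: g_i(\zeta)=0\}| + |\{i : f_i(\zeta)=0\}|, \]
and summing over $\zeta \in \mu(mp^n)$ yields
\[ b_j(X_n;k) = mp^n(e_j - u - v) + \sum_{i=1}^{u} |V(g_i)\cap \mu(mp^n)| + \sum_{i=1}^{v} |V(f_i)\cap \mu(mp^n)|. \]

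Finally I pass to the $p$-adic limit. The leading term $mp^n(e_j - u - v)$ tends to $0$ in $\bbZ_p$ since $\gcd(m,p)=1$ forces $\nu_p(mp^n(e_j-u-v)) \geq n$. Each Laurent polynomial $g_i, f_i$ has only finitely many roots in $\overline{k}^\times$, so for $n$ sufficiently large $V(g_i)\cap \mu(mp^n) = V(g_i)\cap \mu(mp^\infty)$ and analogously for $f_i$. Both sums therefore stabilize at their stated integer limits, proving the formula. The only genuinely delicate step is the rank calculation at $\zeta$, but the block structure of $A_j$ and $A_{j+1}$ makes it transparent; everything else is a bookkeeping exercise driven by the Approximation Theorem.
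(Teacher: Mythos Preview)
Your proof is correct, but it follows a different route from the paper's. You work at each finite level $X_n = Y/mp^n\bbZ$, compute the ordinary Betti number $b_j(X_n;k)$ by evaluating the diagonalized boundary matrices at each $\zeta \in \mu(mp^n)$, and then invoke the Approximation Theorem~\ref{thm:approximation} to pass to the $p$-adic limit. The paper instead works directly with the infinite admissible representation: it uses the cochain complex of \S\ref{par:cochain-complex}, observes that $\pdim_k^G C^\infty(G,k) = 0$, and reduces to computing $\pdim_k^G \ker(r(f))$ for a single Laurent polynomial $f$. The key point there is that $C^\infty(G,\overline{k})$ decomposes as $\bigoplus_{\zeta \in \mu(mp^\infty)} S_\zeta$ and $r(f)$ kills $S_\zeta$ exactly when $f(\zeta)=0$, so $\ker(r(f))$ is \emph{finite dimensional} and its $p$-adic dimension is just $|V(f)\cap\mu(mp^\infty)|$.

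What each buys: your argument is more elementary and self-contained---it never touches the $p$-adic dimension machinery beyond the Approximation Theorem, and the finite-level computation you carry out is essentially the content of the Corollary that follows the Lemma in the paper. The paper's argument yields a stronger structural insight (the relevant kernels inside $C^\infty(G,k)$ are genuinely finite dimensional, not merely of integral $p$-adic dimension), and it illustrates how the $p$-adic dimension formalism is meant to be used in practice.
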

   \begin{proof}
     In view of the description of the cochain complex discussed in
     \S\ref{par:cochain-complex} we see that the cochain complex
  $$ \cdots \longrightarrow C^\infty(G,k)^{e_{j-1}} \stackrel{r(A_j)}{\longrightarrow} C^\infty(G,k)^{e_j}
  \stackrel{r(A_{j+1})}{\longrightarrow} C^\infty(G,k)^{e_{j+1}} \longrightarrow \cdots$$
  computes the cohomology $\bar{H}^j(X;\phi,k)$.
  We recall that $\pdim_k^G(C^\infty(G,k)) = 0$.
  In particular, we obtain the formula
  $$ b_j^{[p]}(X;\phi,k) = \sum_{i=1}^v \pdim_k^{G}\ker(r(f_i)) - \sum_{i=1}^{u}\pdim_k^G \im(r(g_i))$$
  where $r(f_i)$ is the linear map obtained from the right regular representation on
  $C^\infty(G,k)$.  Due to the relation
  $\pdim_k^G(\ker(g_i)) + \pdim_k^G(\im(g_i)) = \pdim_k^G(C^\infty(G,k)) = 0$ it is sufficient
  to understand the $p$-adic dimensions of kernels.

  The irreducible smooth representations of $G$ over the algebraic closure
  $\overline{k}$ are one-dimensional and are parametrized by the roots of unity in
  $\mu(mp^\infty) \subseteq \overline{k}^\times$.  More precisely, for every
  $\zeta \in \mu(mp^\infty)$ there is a unique $1$-dimensional irreducible representation $S_\zeta$ such that
  $t \in G$ acts by $\zeta$.  In addition, every irreducible representation $S_\zeta$ occurs
  with multiplicity one in the right regular representation
  $C^\infty(G,\overline{k}) \cong \overline{k} \otimes_k C^\infty(G,k)$. Indeed, $S_\zeta$ is
  the space spanned by the character $\omega_\zeta \colon G \to \overline{k}^\times$ with
  $t \mapsto \zeta$.

  Let $0 \neq f \in k[t,t^{-1}]$ and write $f = \sum_{a \in \bbZ} \lambda_a t^a$ where almost
  all coefficients $\lambda_a$ vanish. How does $f$ acts on an irreducible representation
  $S_\zeta$?  A calculation shows that
  $$\up{r(f)}\omega_\zeta(x) = \sum_{a \in \bbZ} \lambda_a\omega_\zeta(x+t^{a}) = \sum_{a \in
    \bbZ} \lambda_a \zeta^{a} \omega_\zeta(x) = f(\zeta) \omega_\zeta(x)$$
  for all $x \in G$. In particular, the kernel of
  $r(f)$ consists exactly of those $S_\zeta$ for which $\zeta$ is a root of the Laurent
  polynomial $f$.  In particular, the kernel is finite dimensional and its ($p$-adic) dimension
  is exactly $|V(f) \cap \mu(mp^\infty)|$.
\end{proof}
\begin{corollary*}
  For all sufficiently large $n \in \bbN$,
  the have
  \begin{equation}\label{eq:growth-formula-cyclic}
    b_j(Y/mp^n\bbZ;\bbQ) = (mp^n)^{ b_j^{(2)}(Y,\mathcal{N}\bbZ)} + b_j^{[p]}(X;\phi_m,\bbQ)
  \end{equation}
  where $b_j^{(2)}(Y,\mathcal{N}\bbZ)$ denotes the $L^2$-Betti number of the infinite cyclic covering $Y$ with respect to the action of the infinite cyclic group.
\end{corollary*}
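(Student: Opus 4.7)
The plan is to compute $b_j(Y/mp^n\bbZ;\bbQ)$ directly for large $n$ by imitating the character-by-character analysis used in the preceding lemma, and then to match the two resulting summands with the $L^2$-Betti number and the $p$-adic Betti number. First I would observe that the covering $Y/mp^n\bbZ$ is exactly the finite covering $\widetilde{X}/\Gamma_n$ associated with the open subgroup $N_n = \{0\}\times p^n\bbZ_p \leq_o G$ (using $\gcd(m,p)=1$), so by Example~\ref{ex:finite-group-completion} its Betti numbers coincide with the $p$-adic Betti numbers for the completion $G \twoheadrightarrow G/N_n \cong \bbZ/mp^n\bbZ$. Thus I may use the cochain-complex description of \S\ref{par:cochain-complex} with $G$ replaced by $\bbZ/mp^n\bbZ$.

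Next I would work over $\overline{\bbQ}$ and split the right regular representation of $\bbZ/mp^n\bbZ$ on $C^\infty(\bbZ/mp^n\bbZ,\overline{\bbQ})$ into one-dimensional eigenspaces $S_\zeta$ indexed by $mp^n$-th roots of unity $\zeta$, as in the proof of the previous lemma. Since right multiplication by $h \in \bbQ[t,t^{-1}]$ acts on $S_\zeta$ as multiplication by $h(\zeta)$, and since $A_j$ and $A_{j+1}$ are block-diagonal with the invariant factors $g_i$, $f_i$ on the non-zero block and a zero block of size $e_j-u-v$, the contribution of $S_\zeta$ to $H^j$ is
\begin{equation*}
(e_j-u-v) + |\{i: g_i(\zeta)=0\}| + |\{i: f_i(\zeta)=0\}|.
\end{equation*}
Summing over all $mp^n$-th roots of unity yields
\begin{equation*}
b_j(Y/mp^n\bbZ;\bbQ) = mp^n (e_j-u-v) + \sum_{i=1}^u |V(g_i)\cap \mu_{mp^n}| + \sum_{i=1}^v |V(f_i)\cap \mu_{mp^n}|.
\end{equation*}
Because the polynomials $f_i, g_i$ have only finitely many roots, for all sufficiently large $n$ every root of $f_i$ or $g_i$ lying in $\mu(mp^\infty)$ is already an $mp^n$-th root of unity, so the last two sums stabilize at the value $b_j^{[p]}(X;\phi_m,\bbQ)$ supplied by the previous lemma.

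It remains to identify the coefficient $e_j-u-v$ with the $L^2$-Betti number $b_j^{(2)}(Y,\mathcal{N}\bbZ)$. Here I would invoke the standard fact that for a chain complex of finitely generated free $\bbQ[\bbZ]$-modules, the $\mathcal{N}\bbZ$-dimension of the $j$-th $L^2$-homology equals the $\bbQ[t,t^{-1}]$-rank of the $\bbQ[\bbZ]$-module $H_j(C_*(Y;\bbQ))$; this follows from the fact that $\mathcal{N}\bbZ$ is flat over $\bbQ[\bbZ]$ on the level of dimension and that torsion $\bbQ[t,t^{-1}]$-modules have zero $\mathcal{N}\bbZ$-dimension. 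Reading off this rank from the elementary divisor normal form used earlier gives precisely $e_j-u-v$, and combining this with the computation above produces the claimed identity.

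The only genuinely subtle step is the last one, the identification of $e_j-u-v$ with $b_j^{(2)}(Y,\mathcal{N}\bbZ)$; the rest is bookkeeping with characters of finite cyclic groups and the elementary divisor decomposition. Everything else is a direct translation of the machinery already set up in the preceding lemma from the group $G$ to its finite quotients $G/N_n$.
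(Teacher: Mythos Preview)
Your approach is the same as the paper's: identify $b_j^{(2)}(Y,\mathcal{N}\bbZ)$ with $e_j-u-v$ (the paper simply cites \cite[Lemma~1.34]{L\"uckBook}, which is exactly the $\bbQ(t)$-rank identification you sketch), compute $b_j(Y/mp^n\bbZ;\bbQ)$ from the Smith normal form of $A_j$ and $A_{j+1}$, and note that the root-counting sums stabilize for large $n$ at the value supplied by the preceding lemma. The paper in fact leaves your character-by-character computation ``as an exercise'', so you have filled in precisely the omitted step.

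One point worth flagging: your computation produces the leading term $mp^n\cdot(e_j-u-v)$, a \emph{product}, whereas the displayed formula in both the statement and the paper's proof has $(mp^n)^{e_j-u-v}$, an \emph{exponent}. Your version is the correct one---it is the one forced by the eigenspace count you wrote down, it is consistent with L\"uck's approximation theorem, and it agrees with the knot-complement case $e_j-u-v=0$ treated immediately afterwards. The exponent is evidently a typographical slip in the paper, so your argument establishes the intended identity rather than the one literally printed.
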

\begin{proof}
  In the situation described above, the $L^2$-Betti number of $Y$ with respect to the action
  by deck transformations is
  \[ b^{(2)}(Y,\mathcal{N}\bbZ)= \dim_{\bbQ(t)} \bbQ(t) \otimes_{\bbQ[t,t^{-1}]} H_j(Y,\bbQ) = e_j - u - v;\]
  see \cite[Lemma 1.34]{LückBook}.
  The sets $V(f_i) \cap \mu(mp^\infty)$ and $V(g_i) \cap \mu(mp^\infty)$ are finite, thus there
  is some $n_0 \in \bbN$ such that all elements are $mp^{n_0}$-th roots of unity.
  We leave it as an exercise to check that
  the $j$-th Betti number of the finite covering $Y/mp^n\bbZ$
  is
  $$b_j(Y/p^n\bbZ;\bbQ) = (mp^n)^{e_j-u-v} +
  \sum_{i=1}^v |V(f_i) \cap \mu(mp^n)| + \sum_{i=1}^{u}|V(g_i) \cap \mu(mp^n)|.$$
  We conclude that formula \eqref{eq:growth-formula-cyclic} holds for all $n \geq n_0$.
\end{proof}

\subsection{Knot complements and $p$-adic Betti numbers}
Let $K \subset S^3$ be a knot and let $X = S^3 \setminus \nu(K)$ be the complement of an open
tubular neighbourhood.  The fundamental group $\pi_1(X) = \Gamma$, which is called the knot
group of $K$, has an infinite cyclic abelianization; see \cite[p.112]{CrowellFox}.
The infinite cyclic covering of $X$ will be denoted by $Y$. 
We write $\phi' \colon \Gamma \to \bbZ$ to denote the abelianization map.
As in \S\ref{par:infinite-cyclic-coverings} we obtain a virtual pro-$p$ completion
$\phi_m \colon \Gamma \to \bbZ/m\bbZ \times \bbZ_p$ for every natural number $m$ which
is coprime to $p$.
\begin{proposition*}
  Let $X = S^3 \setminus \nu(K)$ be a knot complement with $\pi_1(X) = \Gamma$.
  Let $p$ be a prime number, $m \in \bbN$ coprime to $p$  and
  $\phi_m \colon \Gamma \to \bbZ/m\bbZ \times \bbZ_p$ be the associated virtual pro-$p$ completion.
  Then
  \begin{equation}\label{eq:p-adic-betti-knots}
    b^{[p]}_1(X;\phi_m,\bbQ) = 1 + \sum_{i=1}^\infty |V(\Delta_i) \cap \mu(mp^\infty)|
  \end{equation}
  where $V(\Delta_i) \subseteq \bbC^\times$ is the vanishing set of the $i$-th knot polynomial
  $\Delta_i$ of $K$; see \cite[Ch.VIII]{CrowellFox}.  For $m=1$ the first $p$-adic Betti
  number $b^{[p]}_1(X;\phi_1,\bbQ)$ equals $1$.
\end{proposition*}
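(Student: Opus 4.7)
The plan is to reduce to the general formula of \S\ref{par:infinite-cyclic-coverings} for infinite cyclic coverings and then translate the resulting elementary divisors into the classical Alexander polynomials of the knot.

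First, applying the lemma of \S\ref{par:infinite-cyclic-coverings} with $j=1$ to the infinite cyclic cover $Y$ gives
$$ b_1^{[p]}(X;\phi_m,\bbQ) = \sum_{i=1}^{v} |V(f_i)\cap \mu(mp^\infty)| + \sum_{i=1}^{u} |V(g_i)\cap \mu(mp^\infty)|, $$
where the $g_i$ and $f_i$ are the elementary divisors over $\bbQ[t,t^{-1}]$ of the cellular boundary maps $\partial_1$ and $\partial_2$ of $Y$. Since $Y$ is connected, $H_0(Y;\bbQ) \cong \bbQ[t,t^{-1}]/(t-1)$, so the only non-unit elementary divisor of $\partial_1$ is $g_1 = t-1$ (up to units), giving $|V(t-1)\cap \mu(mp^\infty)| = |\{1\}| = 1$.

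Next, I identify the first sum with the knot-polynomial sum. Standard knot theory says that the Alexander module $H_1(Y;\bbZ)$ is a finitely generated torsion $\bbZ[t,t^{-1}]$-module (from the Wang exact sequence of the infinite cyclic cover one sees that $t-1$ is surjective on $H_1(Y;\bbQ)$, and surjectivity of $t-1$ on a finitely generated $\bbQ[t,t^{-1}]$-module rules out free summands). So $H_1(Y;\bbQ) \cong \bigoplus_{j=1}^v \bbQ[t,t^{-1}]/(\lambda_j)$ with $\lambda_1\mid \cdots\mid \lambda_v$, and after reindexing the $\lambda_j$ equal the $f_i$. In Crowell--Fox's convention (\cite[Ch.~VIII]{CrowellFox}) the $k$-th knot polynomial $\Delta_k$ generates the $(k-1)$-st elementary ideal and is accordingly $\Delta_k = \lambda_1\cdots \lambda_{v-k+1}$ for $1\leq k\leq v$ and $\Delta_k=1$ for $k>v$; hence $V(\Delta_k) = V(\lambda_{v-k+1})$ because the $\lambda_j$ form a divisibility chain. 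Summing yields
$$ \sum_{k=1}^{\infty}|V(\Delta_k)\cap \mu(mp^\infty)| = \sum_{j=1}^{v}|V(\lambda_j)\cap \mu(mp^\infty)| = \sum_{i=1}^{v}|V(f_i)\cap \mu(mp^\infty)|, $$
proving formula \eqref{eq:p-adic-betti-knots}.

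Finally, for $m=1$ it suffices to show that $V(\Delta_i)\cap \mu(p^\infty) = \emptyset$ for all $i\geq 1$. The classical fact $\Delta_1(1)=\pm 1$ (following from $H_1(X;\bbZ)=\bbZ$ for a knot) combined with the divisibility $\Delta_i\mid \Delta_1$ (a consequence of the chain of elementary ideals) gives $\Delta_i(1)=\pm 1$ for every $i\geq 1$. If $\zeta\in \mu(p^\infty)$ were a root of $\Delta_i$, then either $\zeta=1$, which contradicts $\Delta_i(1)=\pm 1$, or $\zeta$ is a primitive $p^n$-th root of unity with $n\geq 1$, in which case $\Phi_{p^n}\mid \Delta_i$; evaluating at $1$ gives $p=\Phi_{p^n}(1)\mid \Delta_i(1)=\pm 1$, a contradiction.

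The principal subtlety is the bookkeeping match between the Smith-form invariant factors $f_i$ from the lemma and the knot polynomials $\Delta_i$ defined via elementary ideals; once the indexing conventions are aligned, the rest of the argument is routine.
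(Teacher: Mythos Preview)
Your proof is correct and follows essentially the same route as the paper: apply the lemma of \S\ref{par:infinite-cyclic-coverings}, identify the single contributing $g$-factor as $t-1$, and then match the $f$-factors with the knot polynomials, finishing the $m=1$ case via $|\Delta_i(1)|=1$ and cyclotomic divisibility. The only difference is tactical: the paper first replaces $X$ by the presentation complex of $\Gamma$, so that $A_2$ is literally the Alexander matrix and the identity $\Delta_i=f_i\cdots f_v$ (hence $V(\Delta_i)=V(f_i)$) is immediate from the definition of the knot polynomials via elementary ideals; you instead keep an arbitrary CW structure on $X$, pass through the elementary-divisor decomposition of $H_1(Y;\bbQ)$, and then invoke the standard relation between these divisors and the $\Delta_i$. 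Both are fine; the paper's choice avoids the small extra bookkeeping you flag at the end.
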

\begin{proof}
  The knot complement $X$ is aspherical and is
  a $K(\Gamma,1)$-space; see \cite[3.30]{BurdeZieschang}.
  We fix a finite presentation $\Gamma = \langle \gamma_1,\dots,\gamma_s \mid w_1,\dots,w_r\rangle$.
  Since we are only interested in the first cohomology of $X$,
  we can replace $X$ by the presentation complex of the chosen presentation.
  The cellular chain complex $C_*(Y;\bbQ)$ of the
  infinite cyclic covering $Y$ is
  $$ \bbQ[t,t^{-1}]^r \stackrel{A_2}{\longrightarrow} \bbQ[t,t^{-1}]^s  \stackrel{A_1}{\longrightarrow} \bbQ[t,t^{-1}]$$
  where $A_1= (t^{\phi'(\gamma_1)}-1, \dots, t^{\phi'(\gamma_s)}-1)^T$
  and $A_2 \in  M_{r,s}(\bbQ[t,t^{-1}])$ is the Alexander matrix; see \cite[p.100]{CrowellFox}.
  The matrix $A_1$ has exactly one invariant factor: $g_1 = t-1$.
  The invariant factors $f_1,\dots,f_v$ of the Alexander matrix $A_2$
  can be normalized to satisfy
  $$\Delta_i = f_i \cdot f_{i+1} \cdots f_{v}$$
  for all $i \leq v$. By definition $\Delta_i = 1$ for all $i > v$.
  We recall that $f_v \mid f_{v-1} \mid \dots \mid f_1$ and hence
  $V(\Delta_i) = V(f_i)$. Formula \eqref{eq:p-adic-betti-knots} follows
  from the lemma in~\S\ref{par:infinite-cyclic-coverings}.

  The knot polynomials $\Delta_i$ have integral coefficients.
  Assume that $\Delta_i$ vanishes
  at some $p^n$-th root of unity.  Then the $p^n$-th cyclotomic polynomial $\Phi_{p^n}$ divides
  $\Delta_i$ in $\bbQ[t,t^{-1}]$ and by Gau{\ss}' Lemma even in $\bbZ[t,t^{-1}]$. However, this yields a contradiction since 
  $|\Delta_i(1)| = 1$ (see \cite[p.135]{CrowellFox}) and $\Phi_1(1) = 0$ and $\Phi_{p^n}(1) = p$ for all $n\geq 1$.
  We deduce that  $|V(\Delta_i) \cap \mu(p^\infty)| = 0$ and
  thus  $b^{[p]}_1(X;\phi_1,\bbQ) = 1$.
\end{proof}

The proposition and Theorem 8.21 in \cite{BurdeZieschang} show that the first $p$-adic Betti number minus one is
the limit of the ordinary Betti numbers of the $mp^n$-fold branched coverings
of $X$ as $n$ tends to $\infty$.

\begin{example*}
  The trefoil knot $K$ has knot polynomials
  $\Delta_1 = t^2 - t +1 $ and $\Delta_i = 1 $ for all $i \geq 2$.
  We observe that $\Delta_1$ is the $6$-th cyclotomic polynomial
  and we deduce that
  $$b_1^{[p]}(X;\phi_m, \bbQ) = \begin{cases}
    1 \quad &\text{ if } 6 \nmid mp\\
    3 \quad &\text{ if } 6 \mid mp
  \end{cases}.$$
\end{example*}

  \subsection{The $p$-adic torsion of (free abelian)-by-cyclic groups}\label{par:example-torsion}
  In this example we compute the $p$-adic torsion
  $t_2^{[p]}$ of certain (free abelian)-by-cyclic groups.  For instance, the fundamental
  groups of $3$-dimensional solvmanifold fall into this class.

  Let $A \in \GL_N(\bbZ)$. We study the semidirect product group
  $$ \Gamma = \bbZ^N \rtimes \bbZ,$$
  where the action on $\bbZ^N$ is defined via the matrix $A$.
  More precisely, the multiplication is given by
  $$(v,i)(w,j) = (v + A^i w, i+j).$$
   We fix a prime number $p$ and make the following assumptions:
  \begin{enumerate}
    \item[(A1)] No eigenvalue of $A$ is a root of unity and
    \item[(A2)] $A \equiv 1 \bmod p$.
   \end{enumerate}
   Assumption (A2) is convenient, but not really neccessary.  We can achieve (A2) replacing
   $A$ by $A^e$such that $A^e \equiv 1 \bmod p$ for a suitable $e \in \bbN$.  This amounts to
   passing to a finite index subgroup of $\Gamma$; by \S\ref{par:virtual-invariance} this is
   not essential for the computation of $p$-adic torsion.  Assumption (A2) implies that
   $\Gamma$ is residually a finite $p$-group.  Indeed, we use (A2) to consider $A$ as an
   element in the first principal congruence subgroup $\GL_N^1(\bbZ_p)$ of
   $\GL_N(\bbZ_p)$. Since $\GL_N^1(\bbZ_p)$ is a pro-$p$ group, we can define
   $A^\lambda \in \GL_N^1(\bbZ_p)$ for every $\lambda \in \bbZ_p$; cf.\ Lemma 4.1.1 in
   \cite{RibesZalesskii}.  It is easy to verify that the group $G = \bbZ_p^N \rtimes \bbZ_p$,
   where $\lambda \in \bbZ_p$ acts on $\bbZ_p^N$ as $A^\lambda \in \GL^1_N(\bbZ_p)$, is the
   pro-$p$ completion of $\Gamma$. Let $\phi\colon \Gamma \to G$ be the completion map.

   \subsubsection*{The $p$-adic logarithm}
   Below we shall make use of the $p$-adic logarithm and exponential map.
   The powerseries
   $$ \log(1+ B) = \sum_{j=1}^\infty \frac{(-1)^{j-1}}{j} B^j $$
   converges for all $B \in pM_N(\bbZ_p)$ and hence
   defines an analytic function $\log\colon 1+pM_N(\bbZ_p) \to pM_N(\bbZ_p)$.
   If $p$ is odd, then the $p$-adic logarithm is a homeomorphism and its inverse is given by the
   $p$-adic exponential function, which is defined using the exponential series.
   For $p=2$ the exponential series converges only on $4 M_N(\bbZ_2)$ and
   the logarithm is only a homeomorphism when restricted to $1 + 4M_N(\bbZ_2)$.

   \begin{proposition*}
     Under the assumptions (A1) and (A2) above, we have
     $$t^{[p]}_2(\Gamma;\phi,\bbZ[1/p] ) = \epsilon \det(\log(A))_{(p')} $$
     where $\epsilon = \sign \det(A - 1)$ if $p$ is odd and
     $\epsilon = \sign \det(A^{2}-1)$ if $p=2$.
     Here $x_{(p')} = x p^{-\nu_p(x)}$ is the $p'$-part of a non-zero $p$-adic number $x$.  
   \end{proposition*}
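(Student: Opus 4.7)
The strategy is to apply the Approximation Theorem~\ref{thm:approximation} to an explicit exhaustive chain in $G$, compute the integer torsion cardinalities via the Hochschild--Serre spectral sequence, and pass to the $p$-adic limit using the $p$-adic logarithm. First, set $N_n = (p^n\bbZ_p)^N \rtimes p^n\bbZ_p \subseteq G$. Using $A \in 1 + pM_N(\bbZ_p)$ together with the estimate $A^s - 1 \in p^{n+1}M_N(\bbZ_p)$ for $s \in p^n\bbZ_p$, one verifies that $N_n$ is open and normal in $G$, that $\bigcap_n N_n = \{1\}$, and that $\Gamma_n := \phi^{-1}(N_n) = p^n\bbZ^N \rtimes p^n\bbZ$ is canonically isomorphic to $\bbZ^N \rtimes_{A^{p^n}} \bbZ$. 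Since $\Gamma$ has a finite $K(\Gamma,1)$ (a mapping torus of $T^N$), Theorem~\ref{thm:approximation} gives
\[t_2^{[p]}(\Gamma;\phi,\bbZ[1/p]) = \lim_{n \to \infty} \bigl|\tors H^2(\Gamma_n;\bbZ[1/p])\bigr|.\]

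Next, I would apply the Hochschild--Serre spectral sequence to the extension $\bbZ^N \to \Gamma_n \to \bbZ$. Writing $R = \bbZ[1/p]$ and using that $H^p(\bbZ;-)$ vanishes for $p \geq 2$, the spectral sequence collapses to a short exact sequence
\[0 \to H^1(\bbZ^N;R)_\bbZ \to H^2(\Gamma_n;R) \to H^2(\bbZ^N;R)^\bbZ \to 0,\]
with the $\bbZ$-action induced by $A^{p^n}$. The quotient embeds into the free $R$-module $\Lambda^2 R^N$ and is therefore torsion-free, so $\tors H^2(\Gamma_n;R)$ equals the coinvariant module $H^1(\bbZ^N;R)_\bbZ$. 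Assumption~(A1) implies that $1$ is not an eigenvalue of $A^{p^n}$, and since $|\det A^{p^n}| = 1$, these coinvariants form a finite abelian group of order $|\det(A^{p^n}-1)|_{(p')}$.

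For the $p$-adic analysis, use (A2) and set $L_n = p^n\log A$. The identity $A^{p^n} = \exp(L_n)$ holds for $n \geq 0$ if $p$ is odd (where $\log A \in pM_N(\bbZ_p)$) and for $n \geq 1$ if $p=2$ (since $A^2 \in 1+4M_N(\bbZ_2)$ and $\log(A^2) = 2\log A$). This gives the factorisation $A^{p^n} - 1 = L_n U_n$ with
\[U_n = \sum_{k \geq 0}\frac{L_n^k}{(k+1)!} \in 1 + pM_N(\bbZ_p) \quad\text{for large $n$, and}\quad U_n \to 1.\]
Since no eigenvalue of $A$ is a root of unity, $\log A$ has no zero eigenvalue and $\det\log A \neq 0$. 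Taking determinants yields $\det(A^{p^n}-1) = p^{nN}\det(\log A)\det U_n$, so $\nu_p(\det(A^{p^n}-1)) = nN + \nu_p(\det\log A)$ for large $n$ and $(\det(A^{p^n}-1))_{(p')} \to (\det\log A)_{(p')}$ in $\bbZ_p$.

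The remaining, most delicate point is the sign. Setting $d_n = \det(A^{p^n}-1)$, we have $|d_n|_{(p')} = \sign(d_n)\,(d_n)_{(p')}$, so the proof is completed by showing that $\sign(d_n)$ stabilises at~$\epsilon$. This reduces to an eigenvalue bookkeeping argument: complex conjugate pairs of eigenvalues of $A$ contribute positive factors to $d_n$, so only real eigenvalues matter. For a real negative eigenvalue $\lambda < 0$ of $A$ one has $\sign(\lambda^{p^n}-1) = \sign(\lambda-1) = -1$ for every $n \geq 0$ when $p$ is odd (since $p^n$ is odd), whereas for $p=2$ the same quantity equals $\sign(\lambda^2-1)$ for every $n \geq 1$ (since $\lambda^{2^n} > 0$). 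For positive real eigenvalues the sign of $\lambda^{p^n}-1$ is independent of $n$ and coincides with $\sign(\lambda-1)$. These observations yield the dichotomy $\epsilon = \sign\det(A-1)$ for odd $p$ and $\epsilon = \sign\det(A^2-1)$ for $p=2$, completing the proof.
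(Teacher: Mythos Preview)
Your proof is correct and follows the same overall arc as the paper's, but with two genuine differences in execution that are worth noting. First, to identify $\tors H^2(\Gamma_n;\bbZ[1/p])$ you run the Hochschild--Serre spectral sequence for $\bbZ^N \to \Gamma_n \to \bbZ$ and read off the coinvariants $H^1(\bbZ^N;R)_{\bbZ}$; the paper instead invokes the universal coefficient theorem to reduce to $\tors H_1(\Gamma_n;\bbZ[1/p])$ and then computes the abelianisation by hand via the commutator identity $[(v,e),(w,f)] = ((A^e-1)w-(A^f-1)v,0)$. Your route is more structural and would adapt more readily to higher-degree questions, while the paper's is slightly more elementary and avoids any spectral-sequence machinery. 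Second, to evaluate the $p$-adic limit of $\det(A^{p^n}-1)_{(p')}$ you use the exponential factorisation $A^{p^n}-1 = L_nU_n$ with $U_n \to 1$; the paper instead proves directly that $p^{-n}(A^{p^n}-1) \to \log A$ via a binomial expansion and the congruences $\binom{p^n-1}{j-1} \equiv (-1)^{j-1}\bmod p^n$. Both arguments yield the same key input $\det(A^{p^n}-1)_{(p')} \to \det(\log A)_{(p')}$, and your sign analysis via eigenvalue bookkeeping matches the paper's exactly.
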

   \begin{corollary*}
     The $p$-adic torsion takes transcendental values.
   \end{corollary*}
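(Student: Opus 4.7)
The plan is to exhibit an explicit choice of $A$ satisfying (A1) and (A2) for which the value produced by the proposition is provably transcendental, then invoke a standard transcendence theorem for $p$-adic logarithms of algebraic numbers. No new structural theory is needed; the entire content is to realize the proposition's formula in a setting where the transcendence of $\log_p$ on algebraic arguments is available.

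First I would fix an odd prime $p$ and start with the hyperbolic matrix $B=\begin{pmatrix} 2 & 1 \\ 1 & 1 \end{pmatrix}\in\SL_2(\bbZ)$, whose eigenvalues $\lambda_{\pm}=(3\pm\sqrt{5})/2$ are real quadratic units that are not roots of unity. Since $\SL_2(\bbF_p)$ is finite, there is an integer $e\geq 1$ with $B^e\equiv I \pmod{p}$. Set $A:=B^e$ and let $\Gamma=\bbZ^2\rtimes_A\bbZ$ with its pro-$p$ completion $\phi\colon\Gamma\to G$. Then (A1) is inherited from $B$ and (A2) holds by construction, so the hypotheses of the preceding proposition are satisfied.

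Next I would apply the proposition to rewrite the torsion in a tractable form. Writing $\mu:=\lambda_{+}^{e}$, the two eigenvalues of $A$ in a quadratic extension of $\bbQ_p$ are $\mu$ and $\mu^{-1}$ (because $\det A=1$ and the Galois conjugate of $\lambda_+$ is $\lambda_-=\lambda_+^{-1}$). Since $A\equiv I\pmod p$, both eigenvalues lie in $1+\mathfrak{m}$ inside the ring of integers of that extension, so the $p$-adic logarithm is defined on them, and the eigenvalues of $\log A$ are $\log_p\mu$ and $\log_p\mu^{-1}=-\log_p\mu$. Hence
\[
\det(\log A)\;=\;-\bigl(\log_p\mu\bigr)^2,
\]
and the proposition gives $t_2^{[p]}(\Gamma;\phi,\bbZ[1/p])=-\epsilon\bigl(\log_p\mu\bigr)^2_{(p')}$.

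Finally I would conclude by citing the $p$-adic Hermite--Lindemann theorem (due to Mahler, also obtainable as a special case of Brumer's theorem on the $p$-adic linear independence of logarithms of algebraic numbers): for an algebraic number $\alpha\neq 1$ with $|\alpha-1|_p<1$, the value $\log_p\alpha$ is transcendental over $\bbQ$. Since $\mu$ is algebraic, satisfies $|\mu-1|_p<1$, and is not a root of unity (as $\lambda_+$ is not), this applies and yields transcendence of $\log_p\mu$, hence of $(\log_p\mu)^2$. Transcendence is preserved under multiplication by the rational number $p^{-\nu_p(\det\log A)}$ that passes to the $p'$-part, so $t_2^{[p]}(\Gamma;\phi,\bbZ[1/p])$ is transcendental.

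The main obstacle is really a matter of citation rather than new argument: one must make sure to invoke a transcendence statement strong enough to cover \emph{algebraic} $\alpha$ in an extension of $\bbQ_p$, not merely $\alpha\in\bbZ_p$. The Mahler/Brumer formulation handles this uniformly. A minor bookkeeping point is the case $p=2$, where the $p$-adic logarithm requires $A\equiv I\pmod 4$; choosing $e$ slightly larger to enforce $B^e\equiv I\pmod 4$ repairs this, so the example extends to all primes.
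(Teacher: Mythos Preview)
Your proposal is correct and follows essentially the same route as the paper: choose a determinant-one integer $2\times 2$ matrix satisfying (A1) and (A2), observe that $\det(\log A)=-(\log_p\mu)^2$ for the eigenvalue $\mu$, and apply Mahler's $p$-adic transcendence theorem. The only difference is the choice of matrix---the paper uses $A_p=\left(\begin{smallmatrix}1+p^2&p\\p&1\end{smallmatrix}\right)$, whose eigenvalues already lie in $\bbZ_p$ by Hensel's lemma applied to $\xi^2=p^2+4$, thereby sidestepping the extension-field issue you correctly flag.
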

   \begin{proof}[Proof of the Corollary]
     We only discuss the case $p > 2$ but similar examples exist also for $p=2$.
     By Hensel's Lemma there is an element $\xi \in \bbZ_p$ with $\xi^2 = p^2 + 4$.
     The matrix
     $$ A_p = \begin{pmatrix}
       1+p^2 & p \\
       p & 1\\
     \end{pmatrix}$$ has two distinct eigenvalues
     $\lambda_{\pm} = 1+ \frac{p^2}{2} \pm \frac{\xi p}{2} \in \bbZ_p$.  In particular, $A_p$
     satisfies the assumptions (A1) and (A2) above.  Since $\lambda_{+}\lambda_{-} = 1$ we
     have $0 = \log(1) = \log(\lambda_{+}) + \log(\lambda_{-})$.  We diagonalize $A_p$ to see
     that $\det(\log(A_p)) = \log(\lambda_{+}) \log(\lambda_{-}) = - \log(\lambda_{+})^2$.
     It is a result of Mahler that at most one of the $p$-adic numbers $z$ and $\exp(z)$ is algebraic
     (if $\exp$ converges at $z$); see \cite{Mahler33}.
     Since $\lambda_{+}$ is algebraic, we deduce that $\det(\log(A_p)) = -\log(\lambda_{+})^2$ is transcendental.
     Clearly, the $p'$-part of a transcendental number is transcendental
     and thus $t^{[p]}_2(\Gamma;\phi,\bbZ[1/p] )$ is transcendental.
   \end{proof}

   \begin{proof}[Proof of the Proposition]
     A short calculation shows that the commutator of $(v,e), (w,f) \in \Gamma$
     is given by
     \begin{align}\nonumber
       [(v,e),(w,f)] &= (v,e)(w,f)(-A^{-e}v,-e)(-A^{-f}w,-f)\\ 
                     &= ((A^e-1)w -(A^f-1)v,0). \label{eq:commutators}
     \end{align}
     We deduce that the subgroup $\Gamma_n = p^n\bbZ^N \rtimes p^n\bbZ$ is normal in $\Gamma$
     since $[\Gamma_n,\Gamma] \subseteq \Gamma_n$, where we use (A2) and the consequence
     $A^{p^n} \equiv 1 \bmod p^{n+1}$. In fact, $\Gamma_n = \phi^{-1}(G_n)$ where
     $G_n = p^n \bbZ_p^N \rtimes p^n \bbZ_p$.

     It follows from the universal coefficient theorem that
     $\tors H^2(\Gamma_n,\bbZ[1/p]) \cong \tors H_1(\Gamma_n,\bbZ[1/p])$; see the proof of
     Proposition \ref{prop:poincare-duality}.
     
     From \eqref{eq:commutators} one sees 
     that $[\Gamma_n,\Gamma_n] = (A^{p^n}-1)p^n\bbZ^N \rtimes\{0\}$ and we deduce
     $H_1(\Gamma_n,\bbZ) \cong \bbZ^N/(A^{p^n}-1)\bbZ^N \oplus \bbZ$.
     By assumption (A1) the matrix $A^{p^n}-1$ has full rank and therefore the
     elementary divisor theorem yields $|\tors H_1(\Gamma_n,\bbZ)| = |\det(A^{p^n}-1)|$.
     Since we are interested in the homology with coefficients in $\bbZ[1/p]$, we need to take the $p'$-part.
     Now an application of the Approximation Theorem \ref{thm:approximation}
     shows that
     \begin{equation}\label{eq:example-sol-limit}
       t_2^{[p]}(\Gamma;\phi,\bbZ[1/p]) = \lim_{n\to \infty} |\det(A^{p^n}-1)|_{(p')}.
     \end{equation}
     
     \medskip
     
     \noindent\textbf{Claim:} $\lim_{n \to \infty} p^{-n}(A^{p^n}-1) = \log(A).$\\
     We write $A = 1+B$ with $B \in pM_N(\bbZ_p)$. Then
     \begin{align*}
       p^{-n}(A^{p^n}-1) &= p^{-n} \sum_{j=1}^{p^n} \binom{p^n}{j} B^j
                         = \sum_{j=1}^{p^n} \binom{p^n-1}{j-1} \frac{1}{j} B^j\\
                        &= \sum_{j=1}^{2n} \binom{p^n-1}{j-1} \frac{1}{j} B^j + O(p^n)\\
                        &= \sum_{j=1}^{2n} \frac{(-1)\cdot(-2)\cdot(-3)\cdots(-(j-1))}{1\cdot 2 \cdots (j-1)} \frac{1}{j}B^j + O(p^n)\\
                       &= \sum_{j=1}^{2n} \frac{(-1)^{j-1}}{j}B^j + O(p^n)  = \log(A) + O(p^n)
     \end{align*}
     where we use the notation $O(p^n)$ to denote error terms in $p^nM_N(\bbZ_p)$.
     The claim follows taking the limit $n \to \infty$.

     \medskip 
   In addition, this means, that the sequence $\det(p^{-n}(A^{p^n}-1))$ converges to
   $\det(\log(A))$ as $n$ tends to $\infty$. In order to deduce that
   $\lim_{n\to\infty}\det(A^{p^n}-1)_{(p')} = \det(\log(A))_{(p')}$  we show that
   $\det(\log(A)) \neq 0$ . Assume that $p$ is odd and suppose that
   $\det(\log(A)) = 0$.  Then there is a non-zero vector $v \in \bbZ_p^N$ such that
   $\log(A)v = 0$. We deduce that $Av= \exp(\log(A))v = v$; this means, that $A$ has the
   eigenvalue $1$, which is excluded by assumption (A1).  If $p=2$, the same argument
   applies to $A^2$ using that $\log(A^2) = 2 \log(A)$.

   In order to plug $\det(\log(A))$ into formula \eqref{eq:example-sol-limit}, we need to determine
   the sign of $\det(A^{p^n}-1)$.
   Let $\lambda_1,\dots, \lambda_N\in \bbC$ be the eigenvalues (occuring with algebraic multiplicities) of $A$.
   If $\lambda_j = \overline{\lambda}_{j+1}$ is a pair of complex conjugate eigenvalues,
   then $(\lambda^{p^n}_j-1)(\lambda_{j-1}^{p^n}-1) = |\lambda^{p^n}_j-1|^2 > 0$. Therefore the
   complex eigenvalues of $A$ which are not real do not contribute to the sign of $\det(A^{p^n}-1)$.
   Suppose now that $\lambda_j$ is a real eigenvalue.
   If $p$ is odd, then
   $\lambda_j^{p^n}-1$ has the same sign as $\lambda_j-1$ and so $\sign(\det(A-1)) = \sign(\det(A^{p^n}-1))$.
   Similarly, if $p=2$, then $\lambda_j^{2^n}-1$ has the same sign as $\lambda^2_j -1$ and we
   conclude that $\sign(\det(A^2-1)) = \sign(\det(A^{2^n}-1))$.

   Finally, we use \eqref{eq:example-sol-limit} and obtain
   $$t_2^{[p]}(\Gamma;\phi,\bbZ[1/p]) = \lim_{n\to \infty} \epsilon \det(A^{p^n}-1)_{(p')} = \epsilon \det(\log(A))_{(p')}.\qedhere$$
   \end{proof}

   \section{The $p$-adic Atiyah question}\label{sec:p-adic-atiyah}
   \emph{Fix a prime number $p$.
   Throughout this section $k$ denotes either the field $\bbQ$ of rational numbers or
   a finite field of characteristic $\ell \neq p$.}
 
   \subsection{}As discussed in the introduction it is natural to wonder which values
   $p$-adic Betti numbers can actually take. Is every $p$-adic number a $p$-adic Betti number?
   In fact, we are not aware of any example where
   the $p$-adic Betti numbers take a finite value which is not an integer. Here we discuss the
   following $p$-adic analogue of the Atiyah conjecture.
   \begin{question}\label{qu:p-adic-Atiyah}
     Let $X$ be a connected finite CW-complex and $(\phi,G)$ a virtually pro-$p$ completion
     of $\Gamma = \pi_1(X)$. Under which assumptions is $b_j^{[p]}(X;\phi,k) \in \bbZ$?
   \end{question}

   \subsection{A dichotomy}
   The integrality of $p$-adic Betti numbers has a strong influence
   on the possible growth rates of Betti numbers in towers of finite sheeted covering spaces.
   Let $(N_n)_{n\in \bbN}$ be an exhaustive chain in $G$ and write $\Gamma_n = \phi^{-1}(N_n)$.
   Informally, there is the following dichotomy:\\
   \emph{Suppose that $b_j^{[p]}(X;\phi,k) \in \bbZ$ then the sequence
     $(b_j(\widetilde{X}/\Gamma_n;k))_{n\in \bbN}$ either
     \begin{enumerate}
     \item stabilizes at $b_j^{[p]}(X;\phi,k)$ or
     \item grows faster than a universal lower bound.
     \end{enumerate}}
   \noindent The universal bound will depend on the prime and the field~$k$.
   We will make this precise below under the assumption that the chain $(N_n)_{n\in \bbN}$
   is obtained from the Frattini series in a pro-$p$ group $G$.

   \subsection{The Frattini series}\label{par:Frattini-series}
   Let $G$ be a profinite group.
   The Frattini subgroup $\Phi(G)$ is the intersection of all maximal open subgroups of $G$.
   If $G$ is a pro-$p$ group, then the Frattini subgroup can be described as
   $$\Phi(G) = \overline{G^p[G,G]},$$
   i.e. $\Phi(G)$ is the smallest closed subgroup such that $G/\Phi(G)$ is an elementary abelian $p$-group; see 1.13
   in \cite{DDMS}.
   The Frattini series is the normal series of $G$ defined as
   $$ \Phi^0(G) = G \quad \text{ and } \quad \Phi^{n+1}(G) = \Phi(\Phi^n(G)).$$
   For a pro-$p$ group the Frattini series defines an exhaustive chain of open subgroups.
   Moreover, for a finite $p$-group $G$ the Frattini series eventually reaches the trivial subgroup and
   we define the \emph{Frattini length} of $G$ by
   $$\calF(G) = \min \{n \in \bbN_0 \mid \Phi^n(G) = \{1\}\}.$$

   \begin{lemma}\label{lem:frattini-basics}
     Let $G$ and $H$ be finite $p$-groups. Then
     \begin{enumerate}[ (i)]
     \item\label{it:frattini-cyclic} $\calF(\bbZ/p^r\bbZ) = r$.
     \item\label{it:frattini-ses} $\calF(G) \leq \calF(N) + \calF(G/N)$ for every normal subgroup $N \normal G$.
     \item\label{it:frattini-product} $\calF(G\times H) =  \max(\calF(G), \calF(H))$.
       \item\label{it:frattini-sub} If $H \leq G$, then $\calF(G) \leq \calF(H) + \nu_p(|G:H|)$.
     \end{enumerate}
   \end{lemma}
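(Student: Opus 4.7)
The plan is to dispatch the four parts in order, with (i), (iii), and (ii) following directly from the explicit description $\Phi(G) = G^p[G,G]$ for finite $p$-groups recalled in \S\ref{par:Frattini-series}, while (iv) will require an induction on the index that uses part (ii) as the main tool.

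For (i), I would simply compute: since $\bbZ/p^r\bbZ$ is abelian, $\Phi(\bbZ/p^r\bbZ) = p\bbZ/p^r\bbZ \cong \bbZ/p^{r-1}\bbZ$, and iterating yields $\Phi^n(\bbZ/p^r\bbZ) \cong \bbZ/p^{r-n}\bbZ$, so that $\calF(\bbZ/p^r\bbZ) = r$. For (iii) the identity $\Phi(G\times H) = \Phi(G) \times \Phi(H)$ is immediate from the formula, hence $\Phi^n(G\times H) = \Phi^n(G) \times \Phi^n(H)$, which is trivial precisely when both factors are, yielding the claimed maximum.

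For (ii), I would first record two standard observations about the Frattini subgroup of a finite $p$-group, both consequences of the explicit formula: (a) if $K \leq L$ then $\Phi(K) \leq \Phi(L)$, because $K^p[K,K] \leq L^p[L,L]$; and (b) every surjective homomorphism $q \colon G \to G/N$ satisfies $q(\Phi(G)) = \Phi(G/N)$. Iterating these gives $\Phi^m(K) \leq \Phi^m(L)$ and $q(\Phi^n(G)) = \Phi^n(G/N)$. Setting $n = \calF(G/N)$ yields $\Phi^n(G) \subseteq N$; applying $\Phi^m$ for $m = \calF(N)$ and using (a) then gives $\Phi^{n+m}(G) = \Phi^m(\Phi^n(G)) \subseteq \Phi^m(N) = \{1\}$, so $\calF(G) \leq n+m$.

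Part (iv) is the main content, and I would prove it by induction on $|G:H|$. The base case $|G:H| = 1$ is trivial. The key fact for the inductive step, which is the one delicate point of the argument, is that every proper subgroup of a finite $p$-group is contained in a maximal subgroup of index $p$ (necessarily normal): indeed, the image of $H$ in the elementary abelian quotient $G/\Phi(G)$ must be a proper subspace, because $H\Phi(G) = G$ would force $H = G$ by the nongenerator property of $\Phi(G)$; any proper subspace lies in a hyperplane, whose preimage is a normal subgroup $M \normal G$ with $H \leq M$ and $|G:M| = p$. Since $|M:H| = |G:H|/p < |G:H|$, the inductive hypothesis applied to the pair $H \leq M$ gives $\calF(M) \leq \calF(H) + \nu_p(|G:H|) - 1$. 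Now (ii) applied to $1 \to M \to G \to \bbZ/p\bbZ \to 1$, together with (i), yields $\calF(G) \leq \calF(M) + 1 \leq \calF(H) + \nu_p(|G:H|)$, completing the induction.
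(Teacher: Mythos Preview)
Your proof is correct and follows essentially the same strategy as the paper's. The only minor variation is in (iv): the paper handles the index-$p$ step directly by observing that $\Phi(G) \subseteq H$ when $H$ is maximal, hence $\calF(H) \geq \calF(\Phi(G)) = \calF(G)-1$, whereas you obtain the same bound by applying part (ii) to $M \normal G$ with $G/M \cong \bbZ/p\bbZ$; the two arguments are interchangeable.
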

   \begin{proof}
     \eqref{it:frattini-cyclic} follows immediately from $\Phi(\bbZ/p^r\bbZ) \cong \bbZ/p^{r-1}\bbZ$.
     For \eqref{it:frattini-ses} let $n = \calF(G/N)$ and $m = \calF(N)$.  Then
     $\Phi^n(G) \subseteq N$ and thus $\Phi^{n+m}(G) \subseteq \Phi^m(N) = \{1\}$.  Statement
     \eqref{it:frattini-product} follows immediately from the observation
     $\Phi(G\times H) = \Phi(G) \times \Phi(H)$.  The last assertion can be obtained by
     induction on $|G:H|$.  If $|G:H| = p$, then $H$ is maximal and $\Phi(G) \subseteq
     H$. Thus $\calF(H) \geq \calF(\Phi(G)) = \calF(G) - \nu_p(|G:H|)$.  Let $|G:H| \geq
     p^2$. Take a maximal subgroup $M \leq G$ which contains $H$. Then the induction
     hypothesis yields
     \[ \calF(H) \geq \calF(M) - \nu_p(|M:H|) \geq \calF(G) - \nu_p(|G:H|).\qedhere \]
   \end{proof}

   \begin{proposition}\label{prop:frattini-bound}
     Let $p$ be a prime and let $G$ be a finite $p$-group.
     If $\rho\colon G \to \GL_N(k)$ is a faithful irreducible representation,
     then
     \begin{equation}\label{eq:frattini-bound}
       \calF(G) \leq \nu_p(N) + c_{k,p}
     \end{equation}
     with constants $c_{\bbQ,p}=1$ and $c_{\bbF_q,p} = p\log_p(q)$.
   \end{proposition}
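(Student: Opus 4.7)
My plan is to decompose $\rho$ over the algebraic closure $\alg{k}$ to extract dimension and central-character data, to bound $\exp(G)$ via the minimal polynomial of $\rho(g)$ for elements of maximal order, and then to convert the exponent bound into the claimed bound on $\calF(G)$ by combining the monomial structure of irreducible representations of $p$-groups with Lemma~\ref{lem:frattini-basics}.

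First I would decompose $\alg{k}\otimes_k \rho = m\bigoplus_{i=1}^f S_i$ into Galois-conjugate irreducible $\alg{k}$-representations of common dimension $p^j$, invoking Lemma~\ref{lem:keylem}, Step 2. Since Galois conjugates share kernels, each $S_i$ is faithful, so $Z(G)$ is cyclic of order $p^r$ and acts on $S_1$ by a faithful central character of order $p^r$. The character field of $S_1$ contains the cyclotomic extension of $k$ generated by this central character, which gives the lower bound $\nu_p(f) \geq r-1$ (and an analogous statement over $\bbF_q$).

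Second, I establish the exponent bound. Let $g \in G$ have order $p^e = \exp(G)$; since $\rho$ is faithful, $\rho(g) \in \GL_N(k)$ has order $p^e$, so its minimal polynomial over $k$ is divisible by an irreducible factor of $\Phi_{p^e}(x)$ whose roots are primitive $p^e$-th roots of unity. For $k = \bbQ$ this factor is $\Phi_{p^e}$ itself of degree $(p-1)p^{e-1}$, giving $N \geq (p-1)p^{e-1}$ and $e \leq \nu_p(N) + 1$. For $k = \bbF_q$ the factor has degree equal to the multiplicative order of $q$ in $(\bbZ/p^e)^*$; writing $a = \nu_p(q-1)$, a standard computation (using the lifting-the-exponent lemma for $p$ odd, with an analogous argument for $p=2$) gives $\mathrm{ord}_{p^e}(q) \geq p^{e-a}$ for $e \geq a$, hence $e \leq \nu_p(N) + a$, which is absorbed in $c_{k,p}$.

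Third, I convert the exponent bound to the Frattini length bound by exploiting the fact that every irreducible representation of a $p$-group is monomial: write $\rho = \Ind_H^G \chi$ for a subgroup $H$ of index $p^j$ and a $1$-dimensional character $\chi$ of $H$. Lemma~\ref{lem:frattini-basics}(iv) then gives $\calF(G) \leq \calF(H) + j$. The subgroup $H$ has $\chi$ of $p$-power order at most $\exp(H) \leq p^{\nu_p(N)+c_{k,p}}$, and the $G$-core of $\ker\chi$ is trivial by faithfulness of $\rho$. An inductive argument on $|G|$, using Clifford theory for a maximal normal subgroup of index $p$ together with Lemma~\ref{lem:frattini-basics}(ii) and (iv), bounds $\calF(H)$ so that the sum $\calF(H) + j$ is controlled by $\nu_p(N) + c_{k,p}$; the central-character estimate $\nu_p(f) \geq r-1$ enters crucially to absorb the contribution of the center.

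The main obstacle is this last step: the naive inequality $\calF(G) \leq \log_p\exp(G)$ fails for general $p$-groups (witness the Heisenberg group $U_3(\bbF_p)$, where $\exp(G) = p$ but $\calF(G) = 2$), so the exponent bound alone is insufficient. The monomial form of $\rho$ together with the decomposition $\nu_p(N) = j + \nu_p(f) + \nu_p(m)$, combined with the lower bound $\nu_p(f) \geq r - 1$ coming from the cyclotomic extension of the central character, is essential for closing the gap to the precise constant $c_{k,p}$.
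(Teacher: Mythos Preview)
Your proposal has a genuine gap in the third step, and the paper's argument proceeds quite differently.

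The paper's route is: first reduce to the case where $\rho$ is \emph{primitive} (if $\rho = \Ind_H^G\sigma$ with $\dim\sigma = N'$, then Lemma~\ref{lem:frattini-basics}\eqref{it:frattini-sub} gives $\calF(G)\leq\calF(H)+\nu_p(|G:H|)$ and one passes to $H$), then enlarge $\rho(G)$ to a \emph{maximal} primitive $p$-subgroup of $\GL_N(k)$, which can only increase the Frattini length. At this point the paper invokes the explicit classification of maximal primitive $p$-subgroups of $\GL_N(k)$ due to Leedham--Green and Plesken \cite{LGP1986}: for $p>2$ the group is cyclic of order $p^\alpha$ with $N=[k(\zeta_p):k]<p$, and for $p=2$ it is cyclic or semidihedral of small order. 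From these concrete descriptions the bound $\calF(G)\leq\nu_p(N)+c_{k,p}$ is read off directly. No exponent estimate and no inductive bound on $\calF(H)$ is needed.

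Your approach bypasses this classification and tries to reach the bound via an exponent estimate plus the monomial form $\rho=\Ind_H^G\chi$. The exponent bound is correct, and so is $\calF(G)\leq\calF(H)+j$. But the step ``an inductive argument bounds $\calF(H)$'' is not a proof: the character $\chi$ need not be faithful on $H$, so the induction hypothesis only controls $\calF(H/\ker\chi)$, and Lemma~\ref{lem:frattini-basics}\eqref{it:frattini-ses} then leaves an uncontrolled $\calF(\ker\chi)$ term. The trivial-$G$-core condition on $\ker\chi$ does not bound its Frattini length. Likewise, the Clifford reduction you sketch (restricting to a maximal normal $M$) loses a $+1$ in the case where $\rho|_M$ stays irreducible, and in the induced case you again only control $\calF(M/\ker\sigma)$. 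The central-character estimate $\nu_p(f)\geq r-1$ is a true and useful inequality, but you have not shown how it closes these gaps. In short, the obstacle you flag at the end is real and is not overcome by the argument you outline; the paper avoids it entirely by appealing to the structural classification of primitive $p$-subgroups.
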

   \begin{proof}
     We first reduce to the case where $\rho$ is primitive, this means, not induced from a
     proper subgroup.  Indeed, suppose that $\rho$ is induced from an irreducible representation of dimension
     $N'$ from a subgroup $H \leq G$, then
     $N = N' |G:H|$. Assume that \eqref{eq:frattini-bound} holds for $H$,
     then Lemma \ref{lem:frattini-basics}
     \eqref{it:frattini-sub} implies
     $$\calF(G)  \leq \calF(H) + \nu_p(|G:H|) \leq \nu_p(N') + c_{k,p} + \nu_p(|G:H|) = \nu_p(N) + c_{k,p}.$$
     Since enlarging the group, can only increase the Frattini length,
     we may further assume that $\rho(G)$ is a maximal $p$-subgroup of $\GL_N(k)$.
     The maximal primitive $p$-subgroups of $\GL_N(k)$ are well-understood going back to work of Vol'vacev.
     Here we use the complete description
     contained in \cite{LGP1986}.

     \medskip

     \emph{Assume that $p > 2$.} Let $k(\zeta_p)$ be the field obtained from adjoining a primitive
     $p$-th root of unity to $k$. By \cite[(II.4)]{LGP1986} we have $N = [k(\zeta_p)\colon k]$
     and $G$ is a cyclic group of order $p^\alpha$ where
     \[\alpha = \max\{i \mid k(\zeta_p) \text{ contains a primitive } p^{i}\text{-th root of unity}\}.\]
     Using $N < p$ and Lemma \ref{lem:frattini-basics} \eqref{it:frattini-cyclic}, we see that
     $\calF(G) = \alpha \leq \nu_p(N) + \alpha$.
     For $k = \bbQ$ we obtain $N=p-1$ and $\alpha = 1 = c_{\bbQ,p}$.  If
     $k = \bbF_q$ is a finite field, then
     $\alpha = \nu_p(q^N-1) \leq N \log_p(q) \leq p \log_p(q) = c_{\bbF_q,p}$.

     \medskip

     \emph{Assume that $p=2$ and $k = \bbQ$.}
     By \cite[(IV.4)]{LGP1986} we have $N = 1$ and $G$ is the group with $2$-elements.
     In particular, $\calF(G) = 1 \leq \nu_2(1) + 1$.

     \medskip

     \emph{Assume that $p=2$ and $k = \bbF_q$.}
     Suppose first that $q \equiv 1 \bmod 4$.
     By \cite[(III.4)(i)]{LGP1986} this implies that $N=1$ and $G$ is a cyclic group
     of order $\nu_2(q-1)$.
     Again Lemma \ref{lem:frattini-basics} \eqref{it:frattini-cyclic} yields
     $\calF(G) = \nu_2(1) + \nu_2(q-1) \leq \nu_2(1) +  2 \log_2(q)$.

     Suppose now that $q \equiv 3 \bmod 4$.
     By \cite[(III.4)(ii)]{LGP1986} there are two possibilities.
     Either $N=1$ and $|G| = 2$ (and the assertion is obvious) or
     $N = 2$ and $G$ is a semidihedral group of order $2^{\gamma+1}$ where
     $\gamma = \nu_2(q^2-1)$. In the latter case
     we obtain
     \[ \calF(G) \leq \gamma+1 = \nu_2(2) + \gamma \leq \nu_2(2) + \log_2(q^2) = \nu_2(2) + c_{\bbF_q,2}.\qedhere\]
   \end{proof}

   \begin{theorem}\label{thm:dichotomy-Frattini}
     Let $X$ be a connected, locally path-connected and semilocally simply-connected space
     with $\pi_1(X) = \Gamma$.  Let $\phi\colon \Gamma \to G$ be a completion where $G$ is a finitely generated
     pro-$p$ group. We consider $\Gamma_n = \phi^{-1}(\Phi^n(G))$; see \S\ref{par:Frattini-series}.

     Assume that $b_j^{[p]}(X;\phi,k) \in \bbZ$. Then \emph{either}
     $b_j(\widetilde{X}/\Gamma_n;k) = b_j^{[p]}(X;\phi,k)$ for all sufficiently large $n$ \emph{or}
     $$b_j(\widetilde{X}/\Gamma_n;k) \geq p^{n+1-c_{k,p}}+ b^{[p]}_j(X;\phi,k) $$
     for all sufficiently large $n$ with the constants $c_{k,p}$ as in Proposition \ref{prop:frattini-bound}. 
   \end{theorem}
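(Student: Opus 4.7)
The plan is to translate everything to the level of the admissible $G$-representation $\bar{H} = \bar{H}^j(X;\phi,k)$, decompose it into irreducibles, and use Proposition \ref{prop:frattini-bound} to force a strong $p$-adic divisibility on the approximation error.

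First I would set $a_n = b_j(\widetilde{X}/\Gamma_n;k) = \dim_k \bar{H}^{\Phi^n(G)}$ (using Lemma \ref{lem:exact-invariants}(a)) and $b = b_j^{[p]}(X;\phi,k) \in \bbZ$. Two basic properties drive the argument: the sequence $(a_n)$ is non-decreasing, since $\Phi^{n+1}(G) \subseteq \Phi^n(G)$ enlarges the space of invariants, and $a_n \to b$ in $\bbZ_p$ by the Approximation Lemma \ref{lem:approximation-dim}. In particular $\bar{H}$ is admissible, as otherwise $b=\infty$.

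The heart of the proof is to decompose $\bar{H} \cong \bigoplus_{[T] \in \Irr_k(G)} m(\bar{H},T)\, T$ and attach to each irreducible $T$ its \emph{Frattini height} $n(T) := \calF(G/\ker T)$. Because the Frattini subgroup is compatible with quotients, i.e.\ $\Phi^n(G/\ker T) = \Phi^n(G)\ker T/\ker T$, the number $n(T)$ is precisely the smallest $n$ with $\Phi^n(G) \subseteq \ker T$. Hence
\[ a_n = \sum_{n(T)\leq n} m(\bar{H},T)\, \dim_k T \quad \text{and} \quad b - a_n = \sum_{n(T)>n} m(\bar{H},T)\, \dim_k T, \]
where the second equality is an identity in $\bbZ_p$ obtained from the definition of the $p$-adic dimension as a continuous extension of $\dim_k$. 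Now Proposition \ref{prop:frattini-bound}, applied to $T$ regarded as a faithful irreducible representation of the finite $p$-group $G/\ker T$, yields $p^{n(T)-c_{k,p}} \mid \dim_k T$. Every term in the tail has $n(T) \geq n+1$, and is therefore divisible by $p^{n+1-c_{k,p}}$. Since $b-a_n \in \bbZ$ by hypothesis, I conclude that $p^{n+1-c_{k,p}}$ divides $b-a_n$ as an integer.

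The dichotomy is then a short monotonicity argument. If $a_n \leq b$ for all $n$, then $(a_n)$ is a bounded non-decreasing sequence of integers, hence eventually constant, and $p$-adic convergence forces the stable value to equal $b$, which gives the first alternative. Otherwise some $n_0$ satisfies $a_{n_0} > b$; monotonicity propagates this to all $n \geq n_0$, and the divisibility derived above upgrades the positive integer $a_n - b$ to the bound $a_n - b \geq p^{n+1-c_{k,p}}$. I expect the only subtle points to be the compatibility $\Phi^n(G/\ker T) = \Phi^n(G)\ker T/\ker T$ used to identify $n(T)$ with $\calF(G/\ker T)$, and confirming that $T$ really is faithful on $G/\ker T$ (so that Proposition \ref{prop:frattini-bound} applies with the announced constant $c_{k,p}$); both are essentially formal.
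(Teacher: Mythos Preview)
Your proposal is correct and follows essentially the same route as the paper: decompose $\bar{H}^j(X;\phi,k)$ into irreducibles, observe that the tail $b-a_n$ is carried by those $T$ with $\Phi^n(G)\not\subseteq\ker T$, and apply Proposition~\ref{prop:frattini-bound} to bound $\nu_p(\dim_k T)$; the paper phrases the final step via the product formula rather than ``divisibility'', which is in fact the cleaner formulation since $c_{k,p}$ need not be an integer (so ``$p^{n+1-c_{k,p}}$ divides $b-a_n$'' is imprecise---what you actually have, and use, is $\nu_p(b-a_n)\geq n+1-c_{k,p}$).
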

   \begin{proof}
     Assume that the sequence of $j$-th Betti numbers $b_j(n) = b_j(\widetilde{X}/\Gamma_n;k)$ does not
 eventually stabilize at $b_j^{[p]} = b_j^{[p]}(X;\phi,k)$.
 Since the sequence is increasing, its values will eventually exceed the $p$-adic Betti number; i.e.,
 there is some $n_0 \in \bbN$ such that
 $b_j(n) > b_j^{[p]}$ for all $n \geq n_0$.
 The numbers $b_j(n) - b_j^{[p]}$ are positive integers (for $n \geq n_0$)
 and the product formula \cite[p.108]{Neukirch}
 implies
 \[ b_j(n)-b_j^{[p]} = |b_j(n)-b_j^{[p]}|^{-1}_p \prod_{q \neq p} |b_j(n)-b_j^{[p]}|_q^{-1} \]
 where the product runs over all primes $q \neq p$. Note that almost all terms in the product equal $1$.
 Since $b_j(n)-b_j^{[p]}$ is an integer we obtain $|b_j(n)-b_j^{[p]}|_q^{-1} \geq 1$ for every prime $q \neq p$.

 Recall that by \ref{lem:exact-invariants} (a) there is
 an isomorphism
 $H^j(\widetilde{X}/\Gamma_n;k) \cong \bar{H}^j(X;\phi,k)^{\Phi^n(G)}$.
 Decompose $\overline{H}^j(X;\phi,k)$ in irreducible constituents
 \[ \overline{H}^j(X;\phi,k) = \bigoplus_{[S] \in \Irr_k(G)} m_S \: S \]
 with certain multiplicities $m_S \in \bbN_0$.
 The representations occuring in the subrepresentation $\bar{H}^j(X;\phi,k)^{\Phi^n(G)}$ are exactly those that
 factor over $\Phi^n(G)$.
 We deduce that the difference $b_j(n) -b_j^{[p]}$ is divisible by $p^{e(n)}$
 where
 \[ e(n) = \min\bigl\{\nu_p(N) \mid \rho\colon G \to \GL_N(k) \text{ irreducible with } \Phi^n(G) \not\subseteq \ker(\rho) \bigr\}. \]
 Let $\rho\colon G \to \GL_N(k)$ be an irreducible representation which does not factor over $\Phi^n(G)$.
 The Frattini length of $\rho(G)$ is at least $n+1$. An application of Proposition \ref{prop:frattini-bound} yields
 $$ n+1 \leq \calF(\rho(G)) \leq \nu_p(N) + c_{k,p}.$$
 We conclude $|b_j(n) - b_j^{[p]}|_p \leq p^{-e(n)} \leq p^{c_{k,p} - (n+1)}$
 and this completes the proof.
\end{proof}

\subsection{Proof of Theorem \ref{thm:p-adic-analytic-towers}}
 We recall that
   $\phi \colon \Gamma \to \GL_m(\bbZ_p)$ is a homomorphism of groups and the closure
   $G = \overline{\phi(\Gamma)}$ is a compact $p$-adic Lie group of dimension $d = \dim(G)$ say.
  We  use the notation
  $\GL_m^n(\bbZ_p) =  \ker\bigl(\GL_m(\bbZ_p) \to \GL_m(\bbZ/p^n\bbZ)\bigr)$ for
  the principal congruence subgroup of level $n$.
  We study the chain
   \[ \Gamma_n = \phi^{-1}\left(\GL_m^n(\bbZ_p)\right) \]
 of principal congruence subgroups in $\Gamma$.
   The following argument is based on the theory of
  uniform pro-$p$ groups and their Lie algebras as developed in \cite[Chapter 4]{DDMS}.
  For example, the principal congruence subgroups $\GL_m^n(\bbZ_p)$ are uniform pro-$p$ groups for all $n \geq 2$.

  Let $U \normal_o G$ be an open normal uniform pro-$p$ subgroup with Lie $\bbZ_p$-algebra
  $\fu = \log(U)$; for the existence we refer to \cite[Thm.~8.1]{DDMS}.  We can choose $U$ so small that
  $U \subseteq \GL_m^2(\bbZ_p)$. Pick a natural number $r \geq 2$ such that
  $G \cap \GL_m^r(\bbZ_p) \subseteq U^p = \exp(p\fu)$.  This means, the Lie algebra
  $\fu$ satisfies $\fu \cap p^r\gl_m(\bbZ_p) \subseteq p\fu$.
   We deduce
   \[ p^{n+r} \fu \subseteq \fu \cap p^{n+r} \gl_m(\bbZ_p) \subseteq p^{n+1}\fu\]
  for every $n \in \bbN_0$.
  By Lemma~3.4 in \cite{DDMS} the Frattini series of $U$ is $\Phi^n(U) = U^{p^n} = \exp(p^n \fu)$; c.f.~\cite[Lem.~4.14]{DDMS}. In particular, we conclude
  \[ \Phi^{n+r}(U) \subseteq U \cap \GL_m^{n+r}(\bbZ_p) = G \cap \GL_m^n(\bbZ_p) \subseteq \Phi^{n+1}(U) \]
  For $\Delta_n = \phi^{-1}(\Phi^n(U))$ we observe that
$$ \Delta_{n+r} \leq \Gamma_{n+r} \leq \Delta_{n+1}$$
for all $n \in \bbN_0$.  It should be noted that the groups $\Phi^t(U)$ and
$G \cap \GL_m^{n+r}(\bbZ_p)$ are in fact pro-$p$ groups; this allows us to deduce the
inequalities
$$ b_j(\widetilde{X}/\Delta_{r+n};k) \geq  b_j(\widetilde{X}/\Gamma_{r+n};k) \geq  b_j(\widetilde{X}/\Delta_{n+1};k) $$
from Lemma \ref{lem:exact-invariants} (a).  We see, in particular, that the sequence
$(b_j(\widetilde{X}/\Delta_{n};k))_{n \in \bbN}$ stabilizes at the $p$-adic Betti number
exactly if the sequence $(b_j(\widetilde{X}/\Gamma_{n};k))_{n\in \bbN}$ does.

Assume now, that the sequences of Betti numbers do not stabilize.
Then we deduce from Theorem \ref{thm:dichotomy-Frattini} that
$$ b_j(\widetilde{X}/\Gamma_{n+r-1};k) \geq b_j(\widetilde{X}/\Delta_{n}; k) \geq c_1 p^n + b_j^{[p]}(X;\phi,k)$$
for some constant $c_1 > 0$.
Moreover, we observe that for all $n \geq 1$
\begin{align*}
  |\Gamma:\Gamma_{n+r-1}| &= |\Gamma:\Gamma_r|\; | \Gamma_r : \Gamma_{n+r-1}|
  \leq  |\Gamma:\Gamma_r|\; |\Delta_1 : \Delta_{n+r-1}|\\
  &\leq |\Gamma:\Gamma_r|\; |U : \Phi^{n+r}(U)|
  = |\Gamma:\Gamma_r|\;  p^{d(n+r)},
\end{align*}
where the last step uses Proposition~4.4 in \cite{DDMS} and the fact that the lower $p$-series
agrees with the Frattini series in uniform pro-$p$ groups. This shows that $|\Gamma:\Gamma_n|^{1/d} \leq c_2 p^n$ for some constant $c_2 > 0$ and the proof of Theorem~\ref{thm:p-adic-analytic-towers} is complete.

\subsection{An algebraic reformulation of Theorem \ref{thm:p-adic-atiyah-abelian-1}}
Consider the situation of Theorem \ref{thm:p-adic-atiyah-abelian-1}.
 By the results of \S\ref{par:cochain-complex} there is an isomorphism
 $\bar{H}^j(X;\phi,k) \cong \ker(\bar{\partial}^j)/ \im(\bar{\partial}^{j-1})$
 where
 $$ C^\infty(G,k)^{e_{j-1}} \stackrel{\bar{\partial}^{j-1}}{\longrightarrow} C^\infty(G,k)^{e_j} \stackrel{\bar{\partial}^{j}}{\longrightarrow} C^\infty(G,k)^{e_{j+1}}  $$
 and $\bar{\partial}^i = r(A_i)$ is defined as  multiplication with a matrix $A_i\in M_{e_{i+1},e_{i}}(\bbZ[\Gamma])$
 via the right regular representation.
 We obtain the following  purely algebraic
 reformulation the $p$-adic Atiyah question.
   
 \emph{Let $\Gamma$ be a finitely generated group, let $A \in M_{n,m}(\bbZ[\Gamma])$
 and
   let $(\phi,G)$ be a virtual pro-$p$ completion.
   Consider the multiplication map
   $$ r(A) \colon C^\infty(G,k)^m \to C^\infty(G,k)^n$$
   defined by $A$ using the right regular representation of $G$ on $C^\infty(G,k)$.
   The kernel of $r(A)$ is a smooth admissible representation using the left regular representation of $G$.
   Under which assumptions is $\pdim_k^G\bigl( \ker(r(A))\bigr) \in \bbZ$?}
 Theorem \ref{thm:p-adic-atiyah-abelian-1} is an immediate
 consequence of the next result.

 \begin{theorem}\label{thm:p-adic-atiyah-abelian-2}
   Let $\Gamma$ be a group and let $\phi\colon \Gamma \to G$ be a homomorphism into
   a finitely generated, virtually pro-$p$ group $G$ which is
   virtually abelian.
   For every matrix $A \in M_{m,n}(k[\Gamma])$
   the kernel of $r(A) \colon C^\infty(G,k)^n \to C^\infty(G,k)^m$
   satisfies
   $$\pdim_k^G(\ker(r(A))) \in \bbZ.$$
 \end{theorem}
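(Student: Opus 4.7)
The overall strategy is to reduce, by restriction to an open abelian pro-$p$ subgroup, to the case where $G$ is abelian pro-$p$, and then to analyze the rank of a specialized matrix $\tilde A(\chi)$ over $\overline{k}$ as $\chi$ ranges over smooth characters of $G$.

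\emph{Reduction to $G$ abelian.} Let $K \leq_o G$ be an open abelian pro-$p$ subgroup, which exists by hypothesis, so by Definition~\ref{def:p-adic-dim} we have $\pdim_k^G(\ker r(A)) = \pdim_k^K((\ker r(A))|_K)$. Choosing right $K$-coset representatives $g_1,\dots,g_r$ in $G$ (with $r = |G:K|$) yields an identification of left $K$-representations $C^\infty(G,k)|_K \cong C^\infty(K,k)^r$. Under this identification, the right action of any $\phi(\gamma) \in G$ becomes a monomial operation on $C^\infty(K,k)^r$: it permutes the $r$ factors according to the permutation $\sigma_\gamma$ defined by $Kg_i\phi(\gamma) = Kg_{\sigma_\gamma(i)}$ and, on each factor, twists by right multiplication by $k_i := g_i\phi(\gamma)g_{\sigma_\gamma(i)}^{-1} \in K$. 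Consequently $r(A)\colon C^\infty(G,k)^n \to C^\infty(G,k)^m$ becomes a $K$-equivariant map $r(\tilde A)\colon C^\infty(K,k)^{nr} \to C^\infty(K,k)^{mr}$ given by right multiplication by a matrix $\tilde A \in M_{mr,nr}(k[K^\circ])$, where $K^\circ \leq K$ is the finitely generated subgroup generated by all $k_i$ arising from the finite support of $A$. The problem thus reduces to showing $\pdim_k^K(\ker r(\tilde A)) \in \bbZ$ with $K$ abelian pro-$p$.

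\emph{Character analysis.} Fix an exhaustive chain $(N_e)_{e\in\bbN}$ of open subgroups of $K$; by the Approximation Lemma~\ref{lem:approximation-dim},
\[ \pdim_k^K\ker r(\tilde A) \;=\; \lim_{e \to \infty} \dim_k\ker\bigl(r(\tilde A)\colon k[K/N_e]^{nr} \to k[K/N_e]^{mr}\bigr). \]
Extending scalars to $\overline{k}$, we have $\overline{k}[K/N_e] \cong \prod_{\chi\in\widehat{K/N_e}(\overline{k})} \overline{k}_\chi$ as a commutative semisimple algebra, and $r(\tilde A)$ acts on the $\chi$-component as $\tilde A(\chi) \in M_{mr,nr}(\overline{k})$ obtained by evaluating each $k\in K^\circ$ at $\chi(k)$. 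Writing $r_{\max}$ for the generic rank of $\tilde A(\chi)$ on the character torus, one rearranges to
\[ \dim_k\ker r(\tilde A)\bigl|_{k[K/N_e]^{nr}} \;=\; (nr-r_{\max})\,|K/N_e| \;+\; \sum_{r=0}^{r_{\max}-1} \bigl|Z_r(\overline{k}) \cap \widehat{K/N_e}(\overline{k})\bigr|, \]
where $Z_r \subset \mathrm{Spec}\,\overline{k}[K^\circ]$ is the rank-drop locus cut out by the $(r{+}1)\times(r{+}1)$ minors of $\tilde A$. The first term vanishes in $\bbZ_p$, reducing the claim to showing that each $\lim_{e\to\infty}|Z_r \cap \widehat{K/N_e}(\overline{k})|$ is a non-negative integer.

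\emph{Integrality and the main obstacle.} Writing $K \cong \bbZ_p^d \times F$ with $F$ a finite abelian $p$-group, the smooth characters are parametrized by $(\mu_{p^\infty}(\overline{k}))^d \times \widehat{F}(\overline{k})$, embedded in the algebraic torus $\mathbb{G}_m^d \times \widehat{F}$ over $\overline{k}$. By Laurent's theorem on torsion points on subvarieties of tori (also attributed to Mann and Ihara--Serre--Tate), each intersection $Z_r \cap \bigl((\mu_{p^\infty})^d \times \widehat{F}\bigr)$ is a finite union of torsion cosets $\zeta_i T_i$ of subtori $T_i$ of dimensions $d_i$, and for each coset $|\zeta_i T_i \cap ((\mu_{p^e})^d \times \widehat{F})| = c_i\, p^{d_i e}$ for $e \gg 0$. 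In $\bbZ_p$ the contribution vanishes when $d_i \geq 1$ and equals the integer $c_i$ when $d_i = 0$; summing the $d_i = 0$ contributions with inclusion--exclusion to handle overlaps yields the desired integer. The principal obstacle is precisely this last step: existence of the $\bbZ_p$-limit is automatic, but integrality requires controlling positive-dimensional components of the rank-drop loci, which may contain entire subtori; Laurent's theorem is the key structural input that separates their (vanishing) bulk contribution from the isolated torsion points. For $d = 1$ the argument reduces to the one-variable Laurent-polynomial analysis already carried out in \S\ref{par:infinite-cyclic-coverings}; for general $d$, an inductive alternative slicing the character torus by suitable hyperplanes could replace Laurent's theorem but is technically delicate.
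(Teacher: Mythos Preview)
Your argument follows essentially the same route as the paper's: restrict to an open abelian pro-$p$ subgroup via a block-matrix construction (the paper's \S\ref{par:reduction}), decompose by characters, reduce to counting $p$-power torsion points in the rank-drop loci $V(J_i)$ of the matrix, and conclude via the structure theorem for torsion points on subvarieties of tori together with inclusion--exclusion (the paper packages this last step as Lemma~\ref{lem:mann-property}). One point to flag: Laurent's theorem is a characteristic-zero statement, and for $k=\bbF_\ell$ the paper instead appeals to Poizat--Gramain for the required structure of $V\cap\mu(p^\infty)^d$ over~$\overline{\bbF_\ell}$; your sketch should cite this in place of Laurent/Mann/Ihara--Serre--Tate when $\chr(k)>0$.
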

 \subsection{Reduction to the case $G = \bbZ_p^d$}\label{par:reduction}
 In this section we explain why it is sufficient to prove Theorem \ref{thm:p-adic-atiyah-abelian-2}
 for the group $G = \bbZ^d_p$.
 Since $G$ is finitely generated, virtually pro-$p$ and virtually abelian, we can find an open normal
 subgroup $G_0 \normal_o G$ which is a finitely generated abelian pro-$p$ group;
 this means $G_0 \cong F \times \bbZ_p^d$ for some $d \in \bbN_0$ and a finite abelian $p$-group $F$.
 Choosing $G_0$ smaller if necessary, we may assume that $G_0 \cong \bbZ^d_p$.

 By Proposition \ref{par:res-dim} and Definition \ref{def:p-adic-dim}
 we have
 \[\pdim_k^G \ker(r(A)) = \pdim_k^{G_0} \Res_{G_0}^G(\ker(r(A))).\]
 Put $\Gamma_0 = \phi^{-1}(G_0)$. Let $u = |G:G_0| = |\Gamma:\Gamma_0|$.
 To complete the reduction step we show that there is a matrix $B \in M_{um,un}(k[\Gamma_0])$
 such that $ \Res_{G_0}^G(\ker(r(A))) \cong \ker(r(B))$.
 
 For simplicity we consider the case $n=m=1$, i.e.\ $A \in k[\Gamma]$.
 Fix a set of representatives $\gamma_1,\dots, \gamma_u \in \Gamma$ for the cosets of $\Gamma_0$.
 For $i,j \in \{1,\dots,u\}$ we write
 $\gamma_i \gamma_j = y_{i,j} \gamma_{\sigma(i)(j)}$
 with $y_{i,j} \in \Gamma_0$ and $\sigma(i)$ is a permutation of $\{1,\dots,u\}$.
 The coset representatives provide an isomorphism of (left regular) representations
 \[\Theta \colon\Res_{G_0}^G C^\infty(G,k) \to C^\infty(G_0,k)^{u} \]
 defined as $\Theta\colon f \mapsto ((\up{r(\gamma_1)}f)_{|G_0}, \dots, (\up{r(\gamma_u)}f)_{|G_0})^T$.
 
 We write $A = \sum_{j=1}^{u}x_j\gamma_j$ for certain $x_j \in k[\Gamma_0]$.
 Then the $i$-th entry of $\Theta(\up{r(A)}f)$ satisfies
 \begin{align*}
   \Theta_i(\up{r(A)}f) &= \Theta_i(\sum_{j=1}^{u} \up{r(x_j\gamma_j)}f)
              =  \Bigl(\sum_{j=1}^{u} \up{r(\gamma_i)r(x_j\gamma_j)}f\Bigr)_{|G_0}\\
             &=  \sum_{j=1}^{u} \bigl(\up{r(\gamma_ix_j\gamma_i^{-1} y_{i,j} \gamma_{\sigma(i)(j)})}f\bigr)_{|G_0}\\
             &= \sum_{s=1}^{u} \up{r(\gamma_i x_{\sigma(i)^{-1}(s)} \gamma_i^{-1} y_{i,\sigma(i)^{-1}(s)})}\Theta_s(f)      
 \end{align*}
 We write $b_{i,s} = \gamma_i x_{\sigma(i)^{-1}(s)} \gamma_i^{-1} y_{i,\sigma(i)^{-1}(s)} \in k[\Gamma_0]$,
 then the matrix $B = (b_{i,s}) \in M_{u,u}(k[\Gamma_0])$
 satisfies $\up{r(B)}\Theta(f) = \Theta(\up{r(A)}f)$ for all $f \in C^\infty(G,k)$. 
 
 \medskip

 Before we complete the proof of Theorem \ref{thm:p-adic-atiyah-abelian-2}, we collect the following Lemma.

\begin{lemma}\label{lem:mann-property}
  Let $\overline{k}$ be the algebraic closure of $k$.
  
  (a) Let $x_1,\dots, x_m \in \overline{k}^\times$ and let
  $E_1, \dots, E_m \subset (\overline{k}^\times)^d$ be subgroups of the $d$-dimensional
  algebraic torus.
  Then the $p$-adic limit
  \[\lim_{j\to\infty} \Bigl|\mu(p^j)^d \cap \bigcup_{i=1}^m x_i E_i\Bigr| \in \bbZ_p \]
  exists and lies in $\bbZ$.
  
  (b) Let $V \subseteq (\overline{k}^\times)^d$ be a Zariski closed subset and let
  $E \subseteq  (\overline{k}^\times)^d$ be a subgroup.
  Then the $p$-adic limit
  \[\lim_{j\to\infty} \Bigl|\mu(p^j)^d \cap V \cap E \Bigr| \in \bbZ_p\]
  exists and is a rational integer.
\end{lemma}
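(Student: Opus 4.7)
For part (a), my plan is inclusion-exclusion followed by an elementary analysis of a single-coset count. Any finite intersection of cosets $\bigcap_{i \in S} x_i E_i$ is either empty or a single coset of the subgroup $H_S = \bigcap_{i \in S} E_i$, so inclusion-exclusion reduces the statement to the one-coset case: for any subgroup $H \subseteq (\overline{k}^\times)^d$ and any $x \in (\overline{k}^\times)^d$, the limit $\lim_{j\to\infty} |\mu(p^j)^d \cap xH|$ exists in $\bbZ_p$ and equals a non-negative integer. To prove this, set $F := H \cap \mu(p^\infty)^d$; this is a subgroup of $\mu(p^\infty)^d$. Emptiness of $\mu(p^j)^d \cap xH$ is monotone in $j$ (the sets $\mu(p^j)^d \cdot H$ form an increasing chain), so the intersection is either empty for all $j$ or non-empty for all $j \geq j_0$. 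In the non-empty case, $\mu(p^j)^d \cap xH$ is a coset of $F[p^j] := F \cap \mu(p^j)^d$, which is a subgroup of the finite $p$-group $(\bbZ/p^j\bbZ)^d$ and hence of $p$-power order. If $F$ is finite then $F[p^j]$ stabilizes at $F$ for $j$ large; if $F$ is infinite, then $|F[p^j]|$ is an unbounded sequence of $p$-powers and therefore converges to $0$ in $\bbZ_p$. Either way the limit is a non-negative integer, and inclusion-exclusion yields the result.

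For part (b), I would reduce to part (a) via a Manin-Mumford--Laurent type theorem for algebraic tori. Since $\chr(\overline{k}) \neq p$, the subgroup $\mu(p^\infty)^d$ lies in the prime-to-$\chr(k)$ torsion of $(\overline{k}^\times)^d$, for which Laurent's theorem produces finitely many torsion cosets $x_1 H_1, \dots, x_N H_N \subseteq V$, with $x_i$ torsion and $H_i \leq (\overline{k}^\times)^d$ algebraic subgroups, such that $V \cap \mu(p^\infty)^d = \bigcup_{i=1}^N (x_i H_i \cap \mu(p^\infty)^d)$. Intersecting with the subgroup $E$ yields
\[ V \cap E \cap \mu(p^j)^d = \bigcup_{i=1}^N \bigl( (x_i H_i \cap E) \cap \mu(p^j)^d \bigr), \]
and since each $x_i H_i \cap E$ is either empty or a coset of $H_i \cap E$, this is a finite union of cosets of subgroups in $(\overline{k}^\times)^d$. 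Part (a) then delivers convergence and integrality of the limit.

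The main technical obstacle is pinning down a form of the Manin-Mumford / Laurent theorem that applies uniformly when $\overline{k}$ has characteristic zero or characteristic $\ell \neq p$. Since only prime-to-$\ell$ torsion is involved, the classical theorem of Laurent, or even the more elementary bounds of Mann on vanishing sums of roots of unity, suffices and no Frobenius pathologies arise; the rest of the argument is entirely elementary combinatorics of cosets inside $\mu(p^\infty)^d$.
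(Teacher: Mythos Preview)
Your proposal is correct and follows essentially the same route as the paper: inclusion--exclusion reduces part~(a) to a single coset, where the dichotomy between finite and infinite $H\cap\mu(p^\infty)^d$ gives an integer limit, and part~(b) is reduced to~(a) via a Manin--Mumford type result for tori. The only point worth sharpening is the positive-characteristic reference: Laurent's theorem is stated in characteristic zero, and Mann's elementary bounds concern $\bbQ$-linear relations among roots of unity rather than arbitrary Zariski-closed subsets, so for $\chr(k)=\ell\neq p$ you should cite a result that explicitly covers this case (the paper invokes Poizat--Gramain for exactly this purpose).
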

\begin{proof}
  (a) It is sufficient to consider those cosets $x_iE_i$ which contain some element from $\mu(p^\infty)^d$;
   in this case we can assume that $x_i \in \mu(p^\infty)^d$.
   We proceed by induction on $m$.  Let $m = 1$ and assume $x_1 \in \mu(p^{j_0})^d$, then
   $\mu(p^j)^d \cap x_1E_1 = \mu(p^j)^d \cap E_1$ for all $j \geq j_0$.
   Note that $E_1\cap \mu(p^\infty)^d$ is a subgroup of the $p$-torsion group $\mu(p^\infty)^d$.
   In particular, if  $E_1\cap \mu(p^\infty)^d$ is infinite, then
   \[ \lim_{j \to \infty} |\mu(p^j)^d \cap E_1| = 0 \in \bbZ_p.\]
   On the other hand, if $E_1 \cap \mu(p^\infty)^d$ is finite, then the limit is
   the order $|E_1\cap\mu(p^\infty)^d| \in \bbN$.

   Let $m \geq 2$. For the induction step we use the principle of inclusion-exclusion to see that
   $\bigl|\mu(p^j)^d \cap \bigcup_{i=1}^{m} x_i E_i\bigr|$ equals
   \begin{equation*}
   \bigl|\mu(p^j)^d \cap \bigcup_{i=1}^{m-1} x_i E_i\bigr| +
   \bigl|\mu(p^j)^d \cap x_m E_m\bigr| -  \bigl|\mu(p^j)^d \cap \bigcup_{i=1}^{m-1} (x_i E_i \cap x_m E_m)\bigr|.
   \end{equation*}
   This completes the proof, since $x_iE_i \cap x_m E_m$ is either empty or of the form $x'_i(E_i \cap E_m)$.

   (b) By a result of Poizat-Gramain the intersection
   $V\cap \mu(p^\infty)^d = \bigcup_{i=1}^m x_iE_i$ is a finite union of cosets of subgroups $E_1,\dots, E_m \subseteq (\overline{k}^\times)^d$; see  Thm.~1 and Thm.~5 in \cite{PoizatGramain06}.
   For fields of characteristic zero, this has been conjectured by Lang and has been proven by Laurent
   \cite{Laurent84}; for $\overline{k} = \bbC$ a different proof can be found in \cite{SarnakAdams94}.
   Assertion (b) follows from (a), since
   $V\cap E \cap  \mu(p^\infty)^d = \bigcup_{i=1}^m x_iE_i\cap E$
   and $x_iE_i \cap E$ is either empty or of the form $x'_i(E_i\cap E)$.
\end{proof}

\subsection{Proof of Theorem \ref{thm:p-adic-atiyah-abelian-2}}
Since the matrix $A$ has finitely many entries all of which have finite support in $\Gamma$,
we may assume that $\Gamma$ is finitely generated.  By \S\ref{par:reduction} we may assume
that $G = \bbZ_p^d$.  In addition, we can reduce to the case where $\Gamma$ is a free abelian
group.  Indeed, the group $\overline{\Gamma} = \Gamma/\ker(\phi)$ is a free abelian group and
the action of $r(A)$ only depends on the image of $A$ under the reduction homomorphism
$M_{m,n}(k[\Gamma]) \to M_{m,n}(k[\overline{\Gamma}])$.

Assume $\Gamma$ is free abelian. Observe that $\Gamma$ has rank $d+s$ for some $s \in \bbN_0$ since the image $\phi(\Gamma)$ is dense in $G = \bbZ_p^d$.
We pick a basis $t_1 ,\dots, t_{d+s}$ of $\Gamma$.
The group ring $k[\Gamma]$ is isomorphic to the Laurent polynomial ring $k[t_1^{\pm 1},\dots, t_{d+s}^{\pm 1}]$.
Since the set $\{\phi(t_1),\dots, \phi(t_{d+s})\}$ generates $\bbZ_p^d$ as $\bbZ_p$-module,
there is $d$-element subset which is a $\bbZ_p$-basis of $\bbZ_p^d$.
Without loss of generality $\phi(t_1), \dots, \phi(t_d)$ is the standard basis of $\bbZ_p^d$.

We observe that for every subgroup $G_N = p^NG$ there is an isomorphism
$\overline{k} \otimes_k C^\infty(G/G_N,k) \cong C^\infty(G/G_N,\overline{k})$ and, moreover,
the dimension of the kernel of $r(A_N)$ acting on 
acting $C^\infty(G/G_N,k)$ does not change if we extend the scalars to $\overline{k}$.

How can we describe the kernel of $r(A)$ acting on $C^\infty(G,\overline{k})^n$?
The right regular representation  $C^\infty(G,\overline{k})$
decomposes as a direct sum over irreducible representations
$$ C^\infty(G,\overline{k}) = \bigoplus_{S \in \Irr_{\overline{k}}(G)} S$$
each occuring with multiplicity one since $G$ is abelian.
The irreducible representations of $G = \bbZ^d_p$ can be parametized by the set $\mu(p^\infty)^d$ of
$d$-tuples of $p$-power roots of unity in $\overline{k}$.
For $\underline{\zeta} = (\zeta_1,\dots, \zeta_d)$
the irreducible representation $(\rho_{\underline{\zeta}},S_{\underline{\zeta}})$ is the unique $1$-dimensional $\overline{k}$-representation on which
the $i$-th basis element $\phi(t_i)$ of $\bbZ_p^d$ acts like $\zeta_i$.

We need to take a closer look in order to describe the action of $\Gamma$ on $S_{\underline{\zeta}}$.
Since the action of $\Gamma$ is defined via the action of $G$ using the homomorphism $\phi \colon \Gamma \to G$,
we get $\rho_{\underline{\zeta}}(t_i) = \zeta_i \in \overline{k}^\times$ for all $i \in \{1,\dots,d\}$.
We can write $\phi(t_{d+i}) = \sum_{j=1}^d \lambda_{i,j} \phi(t_j)$ for certain $\lambda_{i,j} \in \bbZ_p$.
Therefore $t_{d+i}$ acts as $\prod_{j=1}^{d} \zeta_{j}^{\lambda_{i,j}}$ on $S_{\underline{\zeta}}$.
The map
$$\epsilon \colon \mu(p^\infty) \to E = \Bigl\{ \underline{\zeta} \in \mu(p^\infty)^{d+s} \mid
\zeta_{d+i} = \prod_{j=1}^{d} \zeta_{j}^{\lambda_{i,j}} \text{ for all } 1\leq i\leq s \Bigr\}$$
sending $\underline{\zeta}$ to $(\zeta_1,\dots,\zeta_d, \prod_{j=1}^{d} \zeta_{j}^{\lambda_{1,j}}, \dots  \prod_{j=1}^{d} \zeta_{j}^{\lambda_{s,j}})$ is an isomorphism of groups and
 a Laurent polynomial $P \in  k[t_1^{\pm 1},\dots, t_{d+s}^{\pm 1}]$
 acts on $S_{\underline{\zeta}}$ as multiplication by $\rho_{\underline{\zeta}}(P) = P(\epsilon(\underline{\zeta}))$;
 same argument as in \S\ref{par:infinite-cyclic-coverings}.

 Now we consider a matrix $A \in M_{m,n}( k[t_1^{\pm 1},\dots, t_{d+s}^{\pm 1}])$.
 What is the dimension of the kernel of $\rho_{\underline{\zeta}}(A)$ acting on $S^n_{\underline{\zeta}}$?
 Let $J_i \subseteq k[t_1^{\pm 1},\dots, t_{d+s}^{\pm 1}]$
 be the ideal generated by all $(i \times i)$-minors of $A$.
 The rank of $\rho_{\underline{\zeta}}(A)$ is the maximal $i$ such that
 $\epsilon(\underline{\zeta}) \not\in V(J_i)$.
 We deduce that
 $$\dim_{\overline{k}} \ker\bigl(\rho_{\underline{\zeta}}(A)\bigr) = \sum_{i = 1}^n | V(J_i) \cap
 \{\epsilon(\underline{\zeta})\} |.$$

 Observe that the irreducible constituents of $C^\infty(G/G_N,\overline{k})$
 are exactly those representations $S_{\underline{\zeta}}$ with $\underline{\zeta} \in \mu(p^N)^d$, i.e.
 $$C^\infty(G/G_N,\overline{k}) \cong \bigoplus_{\underline{\zeta} \in \mu(p^N)^d} S_{\underline{\zeta}}.$$
Hence we  get the formula
$$ \dim_k \ker(r(A_N)) = \sum_{i=1}^s |V(J_i) \cap \epsilon(\mu(p^N)^d)|$$
for the dimension of the kernel of $r(A)$ acting on $C^\infty(G,k)^{G_N} = C^\infty(G/G_N,k)$.
We note that $E \cap \mu(p^N)^{d+s} = \epsilon(\mu(p^N)^d)$.
Now the proof is complete using the Approximation Lemma \ref{lem:approximation-dim}
$$ \lim_{N \to \infty} \dim_k \ker(r(A_N)) = \pdim_k^G( \ker(r(A)) )$$
and Lemma \ref{lem:mann-property} (b).

\begin{definition}
Let $X$ be a finite connected CW-complex.  We say that \emph{all $p$-adic Betti numbers of $X$ are integers} if $b_j^{[p]}(X;\phi,k) \in \bbZ$ for all $j \in \bbN_0$ and every virtual pro-$p$
completion $\phi$ of $\Gamma = \pi_1(X)$.
\end{definition}

\begin{lemma}\label{lem:wedge-sums-integers}
  Let $X_1$ and $X_2$ be finite connected CW-complexes.  All $p$-adic Betti numbers
  of $X_1$ and $X_2$ are integers if and only if all $p$-adic Betti numbers of $X_1 \vee X_2$ are integers.
\end{lemma}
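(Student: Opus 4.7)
The plan is to reduce the statement directly to the wedge sum formula in \S\ref{par:wedge-sums}. Since $X_1$ and $X_2$ are finite connected CW-complexes, the fundamental groups $\Gamma_i = \pi_1(X_i)$ are finitely generated, so the proposition applies and, for any virtual pro-$p$ completion $\phi\colon \Gamma_1*\Gamma_2 \to G$, the restrictions $\phi|_{\Gamma_i}$ to the subgroups $\Gamma_i$ are virtual pro-$p$ completions of $\Gamma_i$ with image dense in $K_i = \overline{\phi(\Gamma_i)}$.

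For the forward implication, I would take an arbitrary virtual pro-$p$ completion $\phi$ of $\Gamma_1 * \Gamma_2$ and observe that, by hypothesis, $b_j^{[p]}(X_i;\phi|_{\Gamma_i},k) \in \bbZ$ for all $j$ and both $i$. Since the $p$-adic indices $\Vert G \Vert$, $\Vert G : K_1\Vert$, $\Vert G : K_2 \Vert$ are all non-negative integers (each is either the actual index or zero, by \S\ref{par:index-convention}), the wedge sum formula then exhibits $b_j^{[p]}(X_1\vee X_2;\phi,k)$ as an integer linear combination of integers, hence an integer.

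For the backward implication, given a virtual pro-$p$ completion $\phi_1\colon \Gamma_1 \to G_1$ I construct the virtual pro-$p$ completion $\phi\colon \Gamma_1 * \Gamma_2 \to G_1$ defined by $\phi|_{\Gamma_1} = \phi_1$ and $\phi|_{\Gamma_2} = 1$. This is a virtual pro-$p$ completion because the image contains the dense subgroup $\phi_1(\Gamma_1)$. Under this completion $K_1 = G_1$ and $K_2 = \{1\}$, so $\Vert G_1 : K_1\Vert = 1$ and $\Vert G_1 : K_2 \Vert = \Vert G_1 \Vert$. Using Example \ref{ex:finite-group-completion} to identify $b_j^{[p]}(X_2;\phi|_{\Gamma_2},k) = b_j(X_2;k)$, the wedge formula collapses (for all $j \geq 1$) to
\[ b_j^{[p]}(X_1 \vee X_2;\phi,k) = b_j^{[p]}(X_1;\phi_1,k) + \Vert G_1 \Vert b_j(X_2;k). \]
As the left side is an integer by hypothesis and $\Vert G_1 \Vert b_j(X_2;k) \in \bbZ_{\geq 0}$, the integrality of $b_j^{[p]}(X_1;\phi_1,k)$ follows. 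The symmetric argument applies to $X_2$, and the case $j=0$ is trivial since both spaces are connected.

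The only subtlety is that the wedge formula is stated under the proviso that all involved Betti numbers are finite, so I must first rule out $b_j^{[p]}(X_1;\phi_1,k) = \infty$. For $j \geq 2$ this is immediate from the direct sum decomposition $\bar{H}^j(X_1\vee X_2;\phi,k) \cong \bar{H}^j(X_1;\phi_1,k) \oplus \mathrm{Ind}_{\{1\}}^{G_1} H^j(X_2;k)$, since non-admissibility of the first summand would force non-admissibility of the whole (restrict to an open pro-$p$ subgroup and read off an infinite-dimensional invariant subspace), contradicting finiteness of $b_j^{[p]}(X_1\vee X_2;\phi,k)$. For $j=1$ the same conclusion is extracted from the four-term exact sequence in the wedge proposition, since the remaining terms $C^\infty(G_1,k)$ and the second induced summand are admissible as $G_1$-representations. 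This admissibility bookkeeping is the only technical point; the rest is a direct arithmetic consequence of the wedge formula.
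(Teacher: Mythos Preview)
Your proof is correct and follows essentially the same route as the paper: both directions are reduced to the wedge-sum formula of \S\ref{par:wedge-sums}, and for the backward implication you use the same completion $\phi\colon \Gamma_1*\Gamma_2 \to G_1$ that is trivial on $\Gamma_2$. Two minor remarks: first, the paper splits into the cases $G_1$ finite and $G_1$ infinite (in the latter case $\Vert G_1\Vert = 0$ and the formula collapses to $b_j^{[p]}(X_1\vee X_2;\phi,k) = b_j^{[p]}(X_1;\phi_1,k)$), whereas you handle both at once---your uniform treatment is fine. Second, your admissibility bookkeeping is unnecessary: since $X_1$ is a \emph{finite} CW-complex, $b_j^{[p]}(X_1;\phi_1,k)$ is automatically finite for every $j$ and every completion (see Remark~5.8(c)), so the finiteness proviso in the wedge formula is satisfied for free.
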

\begin{proof}
  Let $\Gamma_i = \pi_1(X_i)$.
  Assume that all $p$-adic Betti numbers of $X_1$ and $X_2$ are integers.  The $p$-adic Betti
  numbers of a finite CW-complex are finite, hence we may apply the formula for
  $p$-adic Betti numbers of wedge sums in \S\ref{par:wedge-sums} to deduce that all $p$-adic Betti
  numbers of $X_1\vee X_2$ are integers.

  Conversely, assume that all $p$-adic Betti numbers of $X_1 \vee X_2$
  are finite. Let $\phi_1 \colon \Gamma_1 \to G_1$ be a virtual pro-$p$ completion. We have to
  to show $b_j^{[p]}(X_1;\phi,k)$ is an integer.
  If $G_1$ is finite there is nothing to do (see Example \ref{ex:finite-group-completion}).
  Assume that $G_1$ is infinite.
  Then $\phi \colon \Gamma_1 * \Gamma_2 \to G_1$ with $\phi_{|\Gamma_1}= \phi_1$ and
  $\phi_{|\Gamma_2} \equiv 1$ is a virtual pro-$p$ completion of $\Gamma_1 * \Gamma_2 = \pi_1(X_1\vee X_2)$.
  The formula for wedge sums in \S\ref{par:wedge-sums} yields
  \begin{equation*}
    b_j^{[p]}(X_1\vee X_2;\phi,k) =  b_j^{[p]}(X_1;\phi_1,k)
  \end{equation*}
  for all $j \in \bbN_0$. Hence all $p$-adic Betti numbers of $X_1$ are integers. The assertion for $X_2$ follows
  in the same way.
\end{proof}

\subsection{Proof of Theorem \ref{thm:p-adic-atiyah-wedge}}
Let $X = X_1 \vee \dots \vee X_n$ be a wedge sum of finite connected CW-complexes.
We assume that the fundamental groups $\Gamma_i = \pi_1(X_i)$ are virtually abelian.
We want to show that all $p$-adic Betti numbers of $X$ are integers.
In view of Lemma \ref{lem:wedge-sums-integers} it is sufficient to show that all
$p$-adic Betti numbers of $X_i$ are integers.
However,
$\Gamma_i$ is virtually abelian,
hence in every virtual pro-$p$ completion $\phi \colon \Gamma_i \to G$
the group $G$ is virtually abelian. An application of Theorem \ref{thm:p-adic-atiyah-abelian-1} completes the
proof.


\begin{thebibliography}{99}

  \bibitem{ABBGNRS}
M.\ Abert, N.\ Bergeron, I.\ Biringer, T.\ Gelander, N.\ Nikolov, J.\ Raimbault, I.\ Samet,
 \emph{On the growth of Betti numbers of locally symmetric spaces},  C.~R.\ Math.\ Acad.\ Sci.\ Paris \textbf{349} (2011), no.~15-16, 831--835. 

  \bibitem{Agol2013}
  I.~Agol,
\emph{The virtual Haken conjecture},
Doc.\ Math.\ \textbf{18} (2013), 1045--1087. 

\bibitem{Atiyah76}
  M.~F.~Atiyah,
  \emph{Elliptic operators, discrete groups and von Neumann algebras},
  Ast\'erisque No.~32-33, Soc.\ Math.\ France, Paris, 1976.
  
\bibitem{BergeronGaboriau2004}
N.\ Bergeron and D.\ Gaboriau,
 \emph{Asymptotique des nombres de Betti, invariants $l^2$ et laminations}, Comment.\ Math.\ Helv.\ \textbf{79} (2004), 362--395. 
  
 \bibitem{BLLS14}
   N.~Bergeron, P.~Linnell, W.~L\"uck, R.~Sauer,
   \emph{On the growth of Betti numbers in p-adic analytic towers}
   Groups Geom.\ Dyn.\ \textbf{8} (2014),  311--329.

 \bibitem{BurdeZieschang}
   G.~Burde,  H.~Zieschang, \emph{Knots}
   Walter de Gruyter \& Co., Berlin, 1985.

   \bibitem{BridsonKochloukova15}
 M.~R.~Bridson, D.~H.~Kochloukova,
\emph{The virtual first Betti number of soluble groups}
Pacific J.\ Math.\ \textbf{274} (2015), 497--510. 

   \bibitem{CalegariEmerton09}
     F.~Calegari, M.~Emerton,
     \emph{Bounds for multiplicities of unitary representations of cohomological type in spaces of cusp forms},
Ann.\ of Math.\ (2) \textbf{170} (2009), 1437--1446. 

\bibitem{CheegerGromov86}
  J.~Cheeger, M.~Gromov,
  \emph{$L^2$-cohomology and group cohomology},
  Topology \textbf{25} (1986), 189--215.

\bibitem{Claussnitzer18}
  A.~Clau{\ss}nitzer,
  \emph{Aspects of $p$-adic operator algebras},
  Ph.D.-Thesis, Technische Universit\"at Dresden, 2018.

\bibitem{CrowellFox}
  R.~H.~Crowell, R.~H.~Fox,
  \emph{Introduction to Knot Theory},
  Springer-Verlag, New York, 1963.

\bibitem{CurtisReinerI}
  C.\ W.\ Curtis, I.\ Reiner,
  \emph{Methods of Representation Theory}, Volume I,
  John Wiley and Sons, New York, 1981.

  \bibitem{DDMS} J.~D. Dixon, M.~P.~F. du~Sautoy, A.~Mann, and
  D.~Segal, Analytic pro-$p$ groups, Cambridge University Press,
  Cambridge, 1999.

\bibitem{Farber1998}
M.\ Farber, \emph{Geometry of growth: approximation theorems for $L^2$ invariants},
 Math.\ Ann.\ \textbf{311} (1998), 335--375.

\bibitem{Hatcher}
  A.\ Hatcher, \emph{Algebraic Topology},
  Cambridge University Press, Cambridge, 2002.

\bibitem{Isaacs}
  I.\ M.\ Isaacs, \emph{Character Theory of Finite Groups},
  Dover Publications, Inc., New York, 1994.



 \bibitem{Jaikin18}
   A.~Jaikin-Zapirain,
   \emph{The base change in the Atiyah and the L\"uck approximation conjectures},
   preprint 2018.

 \bibitem{Kionke18}
   S.~Kionke,
   \emph{Characters, $L^2$-Betti numbers and an equivariant approximation theorem}, 
Math.\ Ann.\ \textbf{371} (2018),  405--444. 



 \bibitem{Lück94}
   W.~L\"uck,
   \emph{Approximating $L^2$-invariants by their finite-dimensional analogues}, 
   Geom.\ Funct.\ Anal.\ \textbf{4} (1994), 455--481.
 \bibitem{LückBook}
   W.~L\"uck,
   \emph{$L^2$-Invariants: Theory and Applications to Geometry and $K$-Theory},
   Springer-Verlag, Berlin, 2002.
   
 \bibitem{LückSurvey16}
   W.~L\"uck,
   \emph{Approximating $L^2$-invariants by their classical counterparts},
EMS Surv.\ Math.\ Sci.\ \textbf{3} (2016), 269--344. 

\bibitem{Laurent84}
   M.~Laurent,
   \emph{\'Equations diophantiennes exponentielles},
   Invent.~Math.\ \textbf{87} (1984), 299--327.

 \bibitem{LGP1986}
   C.~R.~Leedham-Green, W.~Plesken,
   \emph{Some remarks on the Sylow subgroups of general linear groups},
   Math.~Z.\ \textbf{191} (1986), 529--535.

 \bibitem{Mahler33}
   K.~Mahler, \emph{Ein Beweis der Transzendenz der $P$-adischen Exponentialfunktion}
   J.\ Reine Angew.\ Math.\ \textbf{169} (1933), 61--66.

 \bibitem{Neukirch}
   J.~Neukirch,
   \emph{Algebraic number theory},
    Springer-Verlag, Berlin, 1999.

 \bibitem{PoizatGramain06}
   B.~Poizat, F.~Gramain,
   \emph{Liaisons lin\'eaires entre racines de l'unit\'e en characteristique non nulle},
   J.~Algebra \textbf{295} (2006), 512--523.

\bibitem{RibesZalesskii}
  L.~Ribes, P.~Zalesskii, \emph{Profinite Groups},
  Springer-Verlag, Berlin, 2000.

\bibitem{SarnakAdams94}
  P.~Sarnak, S.~Adams,
  \emph{Betti numbers of congruence groups},
  Israel J.~Math. \textbf{88} (1994), 31--72.
  
  
\end{thebibliography}
\end{document}